\theoremstyle{plain}
\newtheorem{theorem}{Theorem}[section]
\newtheorem{corollary}[theorem]{Corollary}
\newtheorem{definition}[theorem]{Definition}
\newtheorem{lemma}[theorem]{Lemma}
\newtheorem{proposition}[theorem]{Proposition}
\theoremstyle{definition}
\newtheorem{remark}[theorem]{Remark}
\numberwithin{equation}{section}
\newcommand{\diff}{\mathop{}\!\mathrm{d}}
\DeclareMathOperator{\tr}{tr}
\DeclareMathOperator{\divr}{div}
\DeclareMathOperator{\spn}{span}
\DeclareMathOperator{\sgn}{sgn}
\newcommand{\R}{\mathbb{R}}
\renewcommand{\S}{\mathbb{S}}
\newcommand{\p}{\partial}
\renewcommand{\th}{\theta}
\newcommand{\om}{\omega}
\newcommand{\rb}{\bar{r}}
\newcommand{\zb}{\bar{z}}
\newcommand{\xb}{\bar{x}}
\newcommand{\bke}[1]{\left ( #1 \right )}
\newcommand{\bkt}[1]{\left [ #1 \right ]}
\newcommand{\EQ}[1]{\begin{equation} #1 \end{equation}}
\newcommand{\EQS}[1]{\begin{equation}\begin{split} #1 \end{split}\end{equation}}
\newcommand{\cF}{\mathcal{F}}
\newcommand{\pd}{\partial}
\newcommand\De{\Delta}
\newcommand\Ga{\Gamma}
\newcommand\Om{\Omega}
\renewcommand{\div}{\mathop{\rm div}}
\title{On the regularity of axisymmetric, swirl-free solutions of the Euler equation in four and higher dimensions}
\author[1]{Evan Miller}
\author[2]{Tai-Peng Tsai}
\affil[1]{University of Maine,
Department of Mathematics and Statistics}
\affil[1]{evan.miller1@maine.edu}
\affil[2]{University of British Columbia, Department of Mathematics}
\affil[2]{ttsai@math.ubc.ca}
\begin{document}

\maketitle

\begin{abstract}
In this paper, we consider axisymmetric, swirl-free solutions of the Euler equation in four and higher dimensions. We show that in dimension $d\geq 4$, axisymmetric, swirl-free solutions of the Euler equation have properties which could allow finite-time singularity formation of a form that is excluded when $d=3$, and we prove a conditional blowup result for axisymmetric, swirl-free solutions of the Euler equation in dimension $d\geq 4$. The condition which must be imposed on the solution in order to imply blowup becomes weaker as $d\to +\infty$, suggesting the dynamics are becoming much more singular as the dimension increases.
\end{abstract}

\setcounter{tocdepth}{2}
\tableofcontents

\section{Introduction}
The incompressible Euler equation is one of the fundamental equations of fluid dynamics, and is given by
\begin{align}
    \partial_t u+(u\cdot\nabla)u
    +\nabla p &=0 \\
    \nabla\cdot u&=0,
\end{align}
where $u\in\mathbb{R}^d$ is the fluid velocity, and $p$ is the pressure, which can be determined from the velocity using the divergence free constraint, yielding
\begin{equation}
    -\Delta p=\sum_{i,j=1}^d
    \frac{\partial u_j}{\partial x_i}
    \frac{\partial u_i}{\partial x_j}.
\end{equation}
This means that the Euler equation can be expressed in terms of the Helmholtz projection as
\begin{equation}
    \partial_t u
    +P_{df}\left((u\cdot\nabla)u\right)
    =0.
\end{equation}

It is a classical result that for sufficiently smooth initial data, the Euler equation has global smooth solutions in two dimensions. In three dimensions, it is a major open question in nonlinear PDE whether smooth solutions of the Euler equation can blowup in finite-time. One of the main partial regularity results is the Beale-Kato-Majda criterion, which states that the $L^1_t L^\infty_x$ norm of the vorticity controls regularity \cite{BKM}, and so if a smooth solution of the Euler equation blows up in finite-time $T_{max}<+\infty$, then
\begin{equation}
    \int_0^{T_{max}}
    \|\vec{\omega}(\cdot,t)\|_{L^\infty}
    \diff t=+\infty.
\end{equation}

A special case of the Euler equation in three dimensions is that of axisymmetric, swirl-free solutions. Axisymmetric, swirl free vector fields have the form
\begin{equation}
    u(x)=u_r(r,z)e_r+u_z(r,z)e_z,
\end{equation}
where
\begin{align}
    r&=\sqrt{x_1^2+x_2^2} \\
    z&=x_3 \\
    e_r&=\frac{(x_1,x_2,0)}{r} \\
    e_z&= e_3,
\end{align}
and this class of vector fields is preserved by the dynamics of the Euler equation.
The scalar vorticity for axisymmetric, swirl-free solutions is $\omega=\partial_ru_z-\partial_zu_r$, and satisfies the evolution equation
\begin{equation}
    \partial_t\omega+(u\cdot\nabla)\omega
    -\frac{u_r}{r}\omega=0.
\end{equation}
This implies that the quantity $\frac{\omega}{r}$ is transported by the flow with
\begin{equation}
    (\partial_t+u\cdot\nabla)\frac{\omega}{r}=0,
\end{equation}
and this is enough to guarantee global regularity subject to reasonable hypotheses on the initial data.
Ladyzhenskaya proved the first global regularity result for axisymmetric, swirl-free fluids in three dimensions, proving global regularity for axisymmetric, swirl-free solutions of the Navier--Stokes equation \cite{Ladyzhenskaya}.
Ukhovskii and Yudovich proved global regularity \cite{Yudovich} for axisymmetric, swirl-free solutions of the Euler equation in three dimensions with initial vorticity satisfying $\omega^0, \frac{\omega^0}{r}\in 
L^2\cap L^\infty$, which was later generalized by Serfati \cite{Serfati} and Saint-Raymond \cite{SaintRaymond}.
Danchin relaxed this requirement \cites{Danchin1,Danchin2} to
initial data satisfying
$\omega^0\in L^{3,1}\cap L^\infty,
\frac{\omega^0}{r}\in L^{3,1}$.
We will note that 
in the standard regularity class for well posedness of strong solutions to the three dimensional Euler equation, $H^s\left(\mathbb{R}^3\right), s>\frac{5}{2}$,
the conditions by Danchin hold automatically,
and so these conditions do not involve any assumptions beyond sufficient regularity of the initial data.

In this paper, we will consider a generalization of axisymmetric, swirl-free solutions of the Euler equation to four and higher dimensions. For $d\geq 4$, we will consider solutions of the form
\begin{equation}
    u(x,t)=u_r(r,z,t)e_r+u_z(r,z,t)e_z,
\end{equation}
where
\begin{align}
    r&=\sqrt{x_1^2+...+x_{d-1}^2} \\
    z&=x_d \\
    e_r&=\frac{(x_1,...,x_{d-1},0)}{r} \\
    e_z&= e_d,
\end{align}
The scalar vorticity is again given by $\omega=\partial_ru_z-\partial_zu_r$, and now satisfies the evolution equation
\begin{equation}
    \partial_t\omega+(u\cdot\nabla)\omega
    -k\frac{u_r}{r}\omega=0,
\end{equation}
where $k=d-2$.
This implies that the quantity $\frac{\omega}{r^k}$ is transported by the flow with
\begin{equation}
    (\partial_t+u\cdot\nabla)\frac{\omega}{r^k}=0.
\end{equation}

\begin{remark}

This immediately opens the possibility of finite-time singularity formation when $d\geq 4$, because the transported quantity $\frac{\omega}{r^k}$ is not necessarily bounded when $k\geq 2$.
In general, for sufficiently smooth, axisymmetric, swirl-free velocities, the vorticity must vanish linearly at the axis.
To be more precise, we will show that if $u\in H^s\left(\mathbb{R}^d\right), s>2+\frac{d}{2}$, then $\frac{\omega}{r}\in L^\infty$.
For $d\geq 4$, by contrast, $\frac{\omega}{r^k}$ may be unbounded for even a Schwartz class vector field.

The fact that there is some control on the advected quantity $\frac{\omega}{r}$ of the kind imposed by Ukhovskii and Yudovich \cite{Yudovich} or Danchin \cites{Danchin1,Danchin2} is essential to the guaranteeing global regularity.
In fact, for solutions in which the vorticity is only $C^\epsilon,$ for a small $\epsilon>0$, Elgindi recently proved finite-time blowup for solutions of the axisymmetric, swirl-free Euler equation in three dimensions \cite{Elgindi}. 
Note that for these solutions, $\frac{\omega}{r}$ is singular at the $z$-axis, $r=0$, even before the blowup time.
When $d\geq 4$, the advected quantity $\frac{\omega}{r^k}$ may be singular at the $z$-axis even for smooth solutions,
so this gives us a good reason to be optimistic that there will be examples of finite-time blowup for smooth, axisymmetric, swirl-free solutions of the Euler equation in four and higher dimensions.
If the quantity $\frac{\omega}{r^k}$ being advected by the flow does not have any reasonable control, then there is no barrier to blowup from axisymmetry, and we can expect blowup for smooth solutions in $d\geq 4$ analogous to the blowup for strong---but not smooth---solutions proven by Elgindi when $d=3$.
\end{remark}

\medskip

It would seem natural to expect that for $d\geq 4$, in the case where $\frac{\omega^0}{r^k}\in L^\infty$, then there will be global regularity by similar arguments guaranteeing global regularity in $d=3$, when $\frac{\omega^0}{r}\in L^\infty$.
This is the case when $d=4$, and we have the following result.

\begin{theorem} \label{4DregularityIntro}
Suppose the initial data $u^0\in H^s_{df}
\left(\mathbb{R}^4\right), s>4$
is axisymmetric and swirl-free and $\frac{\omega^0}{r^2}\in L^1\cap L^\infty$.
Then there exists a global smooth solution of the Euler equation $u\in C\left([0,+\infty),H^s_{df}
\left(\mathbb{R}^4\right)\right)
\cap C^1\left([0,+\infty),H^{s-1}_{df}
\left(\mathbb{R}^4\right)\right)$. Furthermore, we have a bound on vorticity,
with for all $R>0$ and for all $0\leq t<+\infty$,
\begin{equation} 
    \|\omega(\cdot,t)\|_{L^\infty}
    \leq
    \max\left(\left\|\omega^0
    \right\|_{L^\infty(\mathcal{C}_R^c)},
    R^2 \left\|\frac{\omega^0}{r^2}
    \right\|_{L^\infty(\mathcal{C}_R)}\right)
    \exp(2\mu t),
\end{equation}
where
\begin{equation}
    \mu=\frac{C}{R} \left\|\frac{\omega^0}{r^k}
    \right\|_{L^\infty}^\frac{1}{2}
    \left(\left\|\omega^0
    \right\|_{L^{1}(\mathcal{C}_R^c)}
    +R^2\left\|\frac{\omega^0}{r^2}
    \right\|_{L^{1}(\mathcal{C}_R)}
    \right)^\frac{1}{2},
\end{equation}
with the cylinder 
$\mathcal{C}_R\in \mathbb{R}^d$ given by
\[
    \mathcal{C}_R=\left\{x\in \mathbb{R}^d:
    |x'|<R\right\}.
\]
\end{theorem}

\begin{remark}
    This paper replaces an earlier version by the first author that did not include this result. In the meantime, Theorem \ref{4DregularityIntro} was proven independently and first published by Choi, Jeong, and Lim \cite{ChoiJeongLim}. We include it in this paper nonetheless, because it fits naturally into the analysis of upper and lower bounds on the singular behaviour of axisymmetric solutions the Euler equation in higher dimensions. 
    
    Lim and Jeong further refined the analysis in \cite{LimJeongARMA}, proving global regularity for axisymmetric swirl free solutions of the Euler equation whenever $d\leq 6$ and $\frac{\omega^0}{r^{d-2}}\in L^\infty$. For $3\leq d\leq 5$, they proved an upper bound on the growth rate of the form 
    \begin{equation}
    \|\omega(\cdot,t)\|_{L^\infty}
    \leq 
    C(1+t)^\frac{4}{6-d},
    \end{equation}
    and for $d=6$, the bound
    \begin{equation}
    \|\omega(\cdot,t)\|_{L^\infty}
    \leq 
    C e^{\kappa t},
    \end{equation}
in all cases under the assumption that $\frac{\omega^0}{r^{d-2}}\in L^\infty$. These results supersede Theorem \ref{5DregularityIntro} below for $d=5,6$, but the regularity criterion in Theorem \ref{5DregularityIntro} remains relevant when $d\geq 7$, in which case the finite-time blowup problem is open.
\end{remark}

\begin{theorem} \label{5DregularityIntro}
Suppose $u\in C\left([0,T_{max}),H^s_{df}
\left(\mathbb{R}^d\right)\right)
\cap C^1\left([0,T_{max}),H^{s-1}_{df}
\left(\mathbb{R}^d\right)\right),
d\geq 5, s>2+\frac{d}{2}$ is an axisymmetric, swirl-free solution of the Euler equation,
with finite-time blowup at $T_{max}<+\infty$,
and that $\frac{\omega^0}{r^k}\in L^1\cap L^\infty$.
Then for all $0\leq t<T_{max}$,
\begin{equation}
    \|\omega(\cdot,t)
    \|_{L^{1}\left(\mathbb{R}^d\right)}
    \geq 
    \left(\frac{M_d}
    {\left\|\frac{\omega^0}{r^k}
    \right\|_{L^\infty}^\frac{d-2}{d-4}
    \left\|\frac{\omega^0}{r^k}
    \right\|_{L^1}^\frac{2}{d-4}}\right)
    \frac{1}
    {\left(T_{max}-t\right)^{2\frac{d-2}{d-4}}},
\end{equation}
where $M_d>0$ is a constant depending only on the dimension.
Furthermore,
\begin{equation}
    \int_0^{T_{max}}\left\|u_r^+(\cdot,t)
    \right\|_{L^\infty} \diff t
    = +\infty,
\end{equation}
where $u_r^+=\max(u_r,0)$.
\end{theorem}

We will note that in terms of scaling these regularity criteria are substantially stronger than the Beale-Kato-Majda criterion, and so any blowup of this kind must happen in the bulk of the flow, at least to a larger degree than Beale-Kato-Majda on its own would require.
These regularity criteria follow from the fact that when $\frac{\omega^0}{r^k}\in L^\infty$, then there can only be finite-time blowup at some time $T_{max}<+\infty$ if a fluid trajectory runs off to spatial infinity with $r(t)\to\infty$ as $t\to T_{max}$.

As further evidence of potential singularity formation in very high dimensions, we prove that any solution of the Euler equation in $\mathbb{R}^d$ with a particular geometry---that is, axisymmetric, swirl-free, with a scalar vorticity $\omega^0(r,z)$ that is odd in $z$ and satisfying $\omega^0(r,z)\geq 0$ for all $r,z>0$--- blows up in finite time as long as a large enough proportion of energy remains in the vertical velocity component. Furthermore, the proportion of energy remaining in the vertical component necessary to guarantee finite-time blowup shrinks at a rate $\frac{1}{d}$, making the requirement very weak in high dimensions.

\begin{definition}
    For an axisymmetric, swirl-free solution of the incompressible Euler equation $u\in C\left([0,T_{max});
H^s_* \left(\mathbb{R}^d\right)\right)
\cap C^1\left([0,T_{max}),H^{s-1}_*
\left(\mathbb{R}^d\right)\right), 
d\geq 3, s>2+\frac{d}{2}$, 
for all $0\leq t<T_{max}$, let
\begin{align}
K_r(t)&=\int_0^\infty\int_0^\infty 
r^{d-2}u_r(r,z,t)^2 \diff r \diff z \\
K_z(t)&= \int_0^\infty\int_0^\infty 
r^{d-2}u_z(r,z,t)^2 \diff r \diff z,
\end{align}
and let the total energy of the system be given by
\begin{equation}
    K_0=K_r(0)+K_z(0).
\end{equation}
Note that for all $0\leq t<T_{max}$,
\begin{equation}
    K_r(t)+K_z(t)
    =
    \frac{1}{m_{d-2}}
    \|u(\cdot,t)\|_{L^2}^2
    =
    \frac{1}{m_{d-2}}
    \left\|u^0\right\|_{L^2}^2
    =
    K_0,
\end{equation}
due to conservation of energy, so the radial and vertical components of energy always add to the initial total energy, although energy can be exchanged between these two components.
\end{definition}

\begin{theorem} \label{ConditionalBlowupThmIntro}
    Suppose $u\in C\left([0,T_{max});
H^s_* \left(\mathbb{R}^d\right)\right)
\cap C^1\left([0,T_{max}),H^{s-1}_*
\left(\mathbb{R}^d\right)\right), 
d\geq 4, s>2+\frac{d}{2}$, is a solution of the Euler equation and that $\omega^0(r,z)$ is odd in $z$, with for all $r,z>0$
\begin{equation}
    \omega^0(r,z)\geq 0,
\end{equation}
and that $\omega^0$ is not identically zero.
Further suppose that there exists $\epsilon>0$ such that for all $0\leq t<T_{max}$,
\begin{equation}
    K_z(t)\geq \frac{1+\epsilon}{d}K_0.
\end{equation}
Then this solution of the Euler equation blows up in finite-time with
\begin{equation}
    T_{max}\leq \frac{1}{\epsilon K_0} \int_0^\infty \int_0^\infty
    r^{d-1}z \omega^0(r,z) \diff r\diff z.
\end{equation}
\end{theorem}

\begin{remark}
    Note that when the dimension is very large, the amount of energy that needs to remain in the vertical component in order to guarantee blowup becomes extremely small, which suggests increasingly singular behaviour in very high dimensions. This is consistent with the work of Drivas and Elgindi \cite{DrivasElgindi}, which established, in a very different geometry, finite-time blowup in the infinite-dimensional formal limit $d\to +\infty$ for the Euler equation on the torus $\mathbb{T}^d$. The relationship between the finite-time blowup problem for the axisymmetric Euler equation and the dimension was also studied numerically by Hou and Zhang \cite{HouZhang} and, in the viscous case, by Hou \cite{Hou}.
\end{remark}

The paper will be structured as follows. In section \ref{DefinitionSection}, we will go over some key definitions and as well as the notation used in the paper.
In section \ref{AxisymEqnSection}, we will show that the class of axisymmetric, swirl-free vector fields is preserved in four and higher dimensions by the dynamics of the Euler equation and derive the evolution equation for the vorticity.
In section \ref{BiotSavartSection}, we will give a Biot-Savart law for recovering the velocity from the vorticity.
In section \ref{RegCritSection}, we will discuss the regularity theory in four and higher dimensions when $\frac{\omega^0}{r^k}$ is bounded, establishing regularity criteria and proving Theorems \ref{4DregularityIntro} and \ref{5DregularityIntro}.
In section \ref{AntiParallelTubesSection}, we will consider the case of a vorticity with certain symmetry and sign conditions that corresponds to colliding vortex tubes, and prove some preliminary results that suggest the possibility of growth in very large dimensions, proving Theorem \ref{ConditionalBlowupThmIntro}.
In appendix \ref{StreamFunctionSection}, we will express the Biot-Savart law in terms of a stream function.

\begin{remark}
It must be stated at the outset that the Cauchy problem for the Euler equation in four and higher dimensions is completely unphysical. Real, physical fluid flows are three dimensional. In some cases, the flow of actual physical fluids is close enough to being perfectly two dimensional for the two dimensional equation to be a reasonable model, but it is almost inconceivable that the Euler equation in four or higher dimensions could be a reasonable model for any physical fluids. Even in the case of relativistic fluids, where time is not neatly separable from space, the Lorentzian structure means the problem is still fundamentally $3+1$ dimensional, as it is in the classical case if time is treated as a dimension.

Nonetheless, the question of finite-time blowup of the Euler equation in four and higher dimensions is scientifically interesting beyond purely abstract, mathematical curiosity. The dramatic qualitative differences between two and three dimensional fluid mechanics show that turbulence has a very fundamental dependence on dimension. For this reason, any advance on the finite-time blowup of the Euler (or Navier--Stokes) equation in four and higher dimensions---which seems much more within reach than in the three dimensional case---could shed significant light on the possibility of blowup in three dimensions, by allowing the study of the dependence of mechanisms for finite-time blowup on the dimension.
\end{remark}

\section{Definitions and notation}
\label{DefinitionSection}

Throughout this paper: 
\[
  3 \leq d = \mbox{ the spatial dimension}, \quad  k=d-2.
\]

For axisymmetric solutions, it is useful to have notation for the decomposition
$\mathbb{R}^d=\mathbb{R}^{d-1}\times \mathbb{R}$:
\[
    \R^d \ni x=(x',x_d), \qquad x'=(x_1,...,x_{d-1}) \in \R^{d-1}.
\]
We can define the generalized cylindrical coordinates $(r,z)$ for this decomposition by
\[
    r = |x'|, \qquad 
    z = x_d,
\]
with associated unit vectors
\[
    e_r =\left(\frac{x'}{|x'|},0\right), \qquad
    e_z = e_d
\]
where $\left\{e_1,...,e_d\right\}$ denote the standard basis for $\mathbb{R}^d$.
We will say that a vector field $u:\mathbb{R}^d \to \mathbb{R}^d$ is axisymmetric and swirl-free if
\[
    u(x)=u_r(r,z)e_r+u_z(r,z)e_z,
\]
and we will say that a scalar function
$f:\mathbb{R}^d\to \mathbb{R}$ is axisymmetric if
\[
    f(x)=\Tilde{f}(r,z).
\]
We will sometimes slightly abuse notation by equating $f$ with $\Tilde{f}$.
For derivatives in terms of the spatial variables, we will often use the shorthand
\[
    \partial_i=
    \frac{\partial}{\partial x_i},
\]
and likewise for $\partial_r$ and $\partial_z$.


The Lebesgue spaces $L^p\left(\mathbb{R}^d\right)$ will be given the standard definition and norm.
For axisymmetric functions, we will take
$L^p = L^p \left(m_{d-2} r^{d-2} 
\diff r \diff z\right)$,
where $m_d$ is the surface area of the $d$-sphere embedded in $\mathbb{R}^{d+1}$.
So for $1 \leq p < \infty$,
\[
     \|f\|_{L^p} = \|\Tilde{f}\|_{L^p}
    =\left(m_{d-2}\int_0^\infty
    \int_{-\infty}^\infty 
    |\Tilde{f}(r,z)|^p r^{d-2}
    \diff z\diff r
    \right)^\frac{1}{p}.
\]
For $1\leq p,q<+\infty$, we will take the Lorentz spaces $L^{p,q}\left(\mathbb{R}^d\right)$
to be the spaces with the quasinorm
\[
    \|f\|_{L^{p,q}}
    =
    \left(p\int_0^\infty \alpha^{q-1}
    \left|\left\{x\in\mathbb{R}^d:
    |f(x)|>\alpha\right\}
    \right|^\frac{q}{p} \diff\alpha
    \right)^\frac{1}{q}.
\]
In the case $q=\infty$, we will take the quasinorm to be
\[
    \|f\|_{L^{p,\infty}} =
    \left(\sup_{\alpha>0} \alpha^p
    \left|\left\{x\in\mathbb{R}^d:
    |f(x)|>\alpha\right\}\right|
    \right)^\frac{1}{p}.
\]
Note that $L^{p,p}=L^p$, and that for axisymmetric functions, we will again use the measure 
$m_{d-2}r^{d-2}\diff r \diff z$ when working with cylindrical coordinates, so that
\[
    \|f\|_{L^{p,q}} =
    \|\Tilde{f}\|_{L^{p,q}}.
\]

We will define both homogeneous and inhomogeneous Sobolev spaces, $\dot{H}^s$ and $H^s$: for $s > -\frac{d}{2}$,
\[
  \|f\|_{H^s} =
    \left(\int_{\mathbb{R}^d}
    \left(1+|\xi|^2
    \right)^s |\hat{f}(\xi)|^2
    \diff\xi \right)^\frac{1}{2}, \qquad
    \|f\|_{\dot{H}^s} =
    \left(\int_{\mathbb{R}^d}
    |\xi|^{2s}
    |\hat{f}(\xi)|^2 \diff\xi
    \right)^\frac{1}{2},
\]
where $\hat{f}$ denotes the Fourier transform of $f$.

For all $s\in\mathbb{R}$, let
$H^s_{df}\left(\mathbb{R}^d\right)$ be the space of divergence free vector fields:
\[
    H^s_{df} = H^s_{df}\left( \mathbb{R}^d \right) =
    \left\{v\in H^s \left
    (\mathbb{R}^d;\mathbb{R}^d\right)
    :\nabla\cdot v=0\right\}.
\]
Note that the divergence is only a continuous function when $s>1+\frac{d}{2}$, so in general we will say that $\nabla\cdot v=0$ if
\[
    \xi\cdot\hat{v}(\xi)=0
\]
for almost every $\xi\in\mathbb{R}^d$.
We will also take $H^s_{gr}\left(\mathbb{R}^d\right)$ to be the space of gradients,
\[
    H^s_{gr} = H^s_{gr}\left(\mathbb{R}^d \right) =
    \left\{\nabla f \;
    : \; f\in\dot{H}^1(\R^d) \cap
    \dot{H}^{s+1}(\R^d) \right\}.
\]
\begin{remark}
Note that 
$v \in H^s_{gr}$ if and only if
$\hat{v}(\xi)\in\spn(\xi)$ for almost every $\xi\in\mathbb{R}^d$,
and
$v\in H^s_{df}$ if and only if
$\hat{v}(\xi)\in\spn(\xi)^\perp$ for almost every $\xi\in\mathbb{R}^d$.
This leads to the Helmholtz decomposition
\[
    H^s=H^s_{df}\oplus H^s_{gr}.
\]
\end{remark}

Denote the Sobolev space of axisymmetric, swirl-free vector fields as
\[
        H^s_{as} = H^s_{as}\left(\mathbb{R}^d;
        \mathbb{R}^d\right)
        =
        \left\{ v\in H^s(\R^d;\R^d) \; : \;
        v(x)=v_r(r,z)e_r+v_z(r,z)e_z
        \right\}
\]
and the spaces of axisymmetric, swirl-free, divergence free vector fields and axisymmetric, swirl-free gradients as
\[
        H^s_* =
        H^s_{as} \cap H^s_{df}, \qquad
        H^s_{as\&gr} =
        H^s_{as} \cap H^s_{gr}. 
\]

Let $C^k(\mathbb{R}^d)$ denote the space of $k$ times continuously differentiable functions, with bounded derivatives up to order $k$,
with norm
\[
    \|f\|_{C^k} =
    \sup
    _{\substack{\alpha_1,...,\alpha_d\geq 0\\\alpha_1+...+\alpha_d\leq k}}
    \|\partial_1^{\alpha_1}...
    \partial_d^{\alpha_d}f\|_{L^\infty}
\]
We will take the analogous definition when dealing with $C^k$ functions that also depend on a time variable.

\begin{remark}
Note that for all $s>k+\frac{d}{2}$, we have the continuous embedding
\[
    H^s\left(\mathbb{R}^d\right)
    \hookrightarrow
    C^k\left(\mathbb{R}^d\right), \qquad
    \|f\|_{C^k}\leq C_{s,k,d}
    \|f\|_{H^s}.
\]
\end{remark}

We will refer to the anti-symmetric and symmetric parts of the gradient of a vector field $u$ as $A$ and $S$ with
\[
    A_{ij} =\frac{1}{2}\left(
    \partial_i u_j -\partial_j u_i\right), \qquad 
    S_{ij} =\frac{1}{2}\left(
    \partial_i u_j +\partial_j u_i\right),
\]
and define the anti-symmetric gradient operator and symmetric gradient operator likewise:
\[
    \nabla_{asym}u = A, \qquad 
    \nabla_{sym}u = S.
\]
For $d=3$, $A$ can be represented in terms of the vorticity vector $\vec{\omega}=\nabla\times u$ by
\[
    A= \frac{1}{2} \left(
    \begin{array}{ccc}
        0 & \vec{\omega}_3  & -\vec{\omega}_2 \\
        -\vec{\omega}_3 & 0 & \vec{\omega}_1  \\
        \vec{\omega}_2 & -\vec{\omega}_1 & 0
    \end{array}
    \right).
\]
For axisymmetric, swirl-free vector fields $u$,
\[
    A(x) =\frac{1}{2}\omega(r,z) 
    \left(e_r\otimes e_z - e_z\otimes e_r\right)
\]
where the scalar vorticity is
\[
  \om(r,z) = \p_r u_z - \p_z u_r.
\]
For $d=3$,
\[
    \vec{\omega}(x)=-\omega(r,z) e_\theta.
\]


\section{The axisymmetric Euler equation in higher dimensions}
\label{AxisymEqnSection}

In this section, we will show that the axisymmetric, swirl-free Euler equation in four and higher dimensions can be expressed in terms of the evolution of the scalar vorticity,
just as it can in three dimensions.

\subsection{Preliminaries}

We will begin by going over some of the vector calculus results necessary for the analysis of axisymmetric, swirl-free solutions. The computations are mostly routine exercises, however they are very important to the analysis.

\begin{proposition} \label{UgradProp}
Suppose $u\in H_*^s\left(\mathbb{R}^d\right),
s >1+\frac{d}{2}$, is axisymmetric and swirl-free. Then the gradient of $u$ can be expressed as
\begin{multline}
    \nabla u(x)=
    \partial_ru_r(r,z)e_r\otimes e_r
    +\frac{u_r(r,z)}{r} \Tilde{I}_d
    +\partial_z u_z(r,z)e_z\otimes e_z \\
    +\partial_ru_z(r,z) e_r\otimes e_z
    +\partial_zu_r(r,z) e_z\otimes e_r,
\end{multline}
where 
\begin{equation}
    \Tilde{I}_d
    =
    I_d-e_r\otimes e_r -e_z\otimes e_z.
\end{equation}
\end{proposition}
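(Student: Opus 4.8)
The plan is to compute $\nabla u$ componentwise in Cartesian coordinates and then repackage the result into the tensor basis $\{e_r\otimes e_r,\,e_r\otimes e_z,\,e_z\otimes e_r,\,e_z\otimes e_z,\,\Tilde{I}_d\}$. With the convention $(\nabla u)_{ij}=\partial_i u_j$ consistent with the definitions of $A$ and $S$ above, I first record the Cartesian components of $u$: for $j\leq d-1$ one has $u_j=u_r(r,z)\frac{x_j}{r}$, while $u_d=u_z(r,z)$. Since $s>1+\frac{d}{2}$, the embedding $H^s\hookrightarrow C^1$ guarantees that $\nabla u$ is continuous, so the pointwise computation below is legitimate for $r>0$; the apparent singularity of the $\frac{u_r}{r}$ term at the axis is harmless, because continuity of $u$ forces $u_r(0,z)=0$, and the identity then extends to $r=0$ by continuity.

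Second, I would assemble the chain-rule building blocks. For $i\leq d-1$ we have $\partial_i r=\frac{x_i}{r}=(e_r)_i$ and $\partial_i z=0$, while $\partial_d r=0$ and $\partial_d z=1$; hence for any $f(r,z)$ one gets $\partial_i f=\partial_r f\,(e_r)_i$ when $i\leq d-1$ and $\partial_d f=\partial_z f$. The only nonroutine derivative is that of the radial unit vector: for $i,j\leq d-1$,
\begin{equation}
    \partial_i\left(\frac{x_j}{r}\right)
    =\frac{\delta_{ij}}{r}-\frac{x_i x_j}{r^3}
    =\frac{1}{r}\left(\delta_{ij}-(e_r)_i(e_r)_j\right).
\end{equation}
I would then split the matrix $\partial_i u_j$ into the four index blocks determined by whether each of $i,j$ is $\leq d-1$ (transverse) or equals $d$ (axial).

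Third comes the key block. For $i,j\leq d-1$, applying the product rule to $u_j=u_r\frac{x_j}{r}$ and using the building blocks gives
\begin{equation}
    \partial_i u_j
    =\partial_r u_r\,(e_r)_i(e_r)_j
    +\frac{u_r}{r}\left(\delta_{ij}-(e_r)_i(e_r)_j\right).
\end{equation}
The main point of the proof is to recognize that, restricted to the transverse indices $i,j\leq d-1$ where $(e_z)_i=(e_z)_j=0$, one has $\delta_{ij}-(e_r)_i(e_r)_j=(I_d-e_r\otimes e_r-e_z\otimes e_z)_{ij}=(\Tilde{I}_d)_{ij}$; this is exactly where the full $(d-1)$-dimensional transverse geometry enters and produces the $\frac{u_r}{r}\Tilde{I}_d$ term, whose trace carries the factor $k=d-2$ that later drives the vorticity stretching. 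The remaining three blocks are routine: the $(i\leq d-1,\,j=d)$ block gives $\partial_i u_d=\partial_r u_z\,(e_r)_i=\partial_r u_z\,(e_r\otimes e_z)_{id}$, the $(i=d,\,j\leq d-1)$ block gives $\partial_d u_j=\partial_z u_r\,(e_r)_j=\partial_z u_r\,(e_z\otimes e_r)_{dj}$, and the $(i=d,\,j=d)$ entry is $\partial_d u_d=\partial_z u_z=\partial_z u_z\,(e_z\otimes e_z)_{dd}$. Summing the four blocks and rewriting each in the tensor basis yields the stated formula. The step requiring the most care is the transverse block together with the identification of $\Tilde{I}_d$, since every other contribution is a one-line chain-rule computation.
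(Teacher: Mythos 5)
Your proof is correct and is essentially the paper's own argument: both hinge on the product rule applied to $u_r e_r + u_z e_z$ together with the computation $\partial_i\left(\frac{x_j}{r}\right)=\frac{1}{r}\left(\delta_{ij}-(e_r)_i(e_r)_j\right)$, which is exactly the paper's identity $\nabla e_r=\frac{1}{r}\Tilde{I}_d$ written in index form. The only differences are cosmetic (Cartesian index blocks versus coordinate-free tensor notation), plus your added remark about extending the formula to the axis $r=0$, which the paper omits.
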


\begin{proof}
First we know that 
\begin{equation}
    u(x)=u_r(r,z)e_r+u_z(r,z)e_z
\end{equation}
We know that
\begin{equation}
    \nabla r=e_r,
\end{equation}
and therefore we can see that
\begin{multline}
    \nabla u(x)=
    \partial_ru_r(r,z)e_r\otimes e_r
    +\partial_z u_z(r,z)e_z\otimes e_z \\
    +\partial_ru_z(r,z) e_r\otimes e_z
    +\partial_zu_r(r,z) e_z\otimes e_r
    +u_r(r,z)\nabla e_r.
\end{multline}
It remains only to show that
\begin{equation}
    \nabla e_r=
    \frac{1}{r}\Tilde{I}_d
\end{equation}
Recall that $e_r=\frac{x'}{|x'|},$ where 
$x'=(x_1,...,x_{d-1},0)$.
Clearly we have
\begin{equation}
    \nabla x'=I_d-e_z\otimes e_z,
\end{equation}
and
\begin{equation}
    \nabla \frac{1}{|x'|}
    =
    \nabla \frac{1}{r}
    =
    -\frac{1}{r^2} e_r.
\end{equation}
Therefore we can conclude that
\begin{align}
    \nabla e_r
    &=
    \frac{1}{r}\nabla x'+ 
    \nabla\frac{1}{|x'|} \otimes x' \\
    &=\frac{1}{r}\left(I_d-e_z\otimes e_z\right)
    -\frac{1}{r^2} e_r\otimes x' \\
    &=
    \frac{1}{r}\left(I_d
    -e_z\otimes e_z -e_r\otimes e_r\right).
\end{align}
This completes the proof.
\end{proof}

\begin{proposition} \label{DivProp}
Suppose $u\in H^s\left(\mathbb{R}^d\right),
s > 1+\frac{d}{2}$, is axisymmetric and swirl-free. Then the divergence of $u$ can be expressed as
\begin{equation}
    \nabla\cdot u(x)
    =\partial_ru_r(r,z)
    +k\frac{u_r(r,z)}{r}
    +\partial_zu_z(r,z),
\end{equation}
where $k=d-2$.
\end{proposition}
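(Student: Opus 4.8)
The plan is to compute the divergence as the trace of the gradient tensor, since $\nabla \cdot u = \tr(\nabla u)$. This is the natural approach because Proposition \ref{UgradProp} has already given us an explicit expression for $\nabla u$ in the axisymmetric, swirl-free setting, so the divergence falls out immediately by taking the trace term-by-term.

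The key observation driving the calculation is that the trace of a rank-one tensor $a \otimes b$ equals $a \cdot b$. Applying this to each of the five terms in the expression for $\nabla u$ from Proposition \ref{UgradProp}, I would use the orthonormality relations $e_r \cdot e_r = 1$, $e_z \cdot e_z = 1$, and $e_r \cdot e_z = 0$. The terms $\partial_r u_r \, e_r \otimes e_r$ and $\partial_z u_z \, e_z \otimes e_z$ contribute $\partial_r u_r$ and $\partial_z u_z$ respectively, while the cross terms $\partial_r u_z \, e_r \otimes e_z$ and $\partial_z u_r \, e_z \otimes e_r$ contribute nothing to the trace because $e_r$ and $e_z$ are orthogonal. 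The only step requiring a moment of care is the trace of $\frac{u_r}{r} \Tilde{I}_d$: since $\Tilde{I}_d = I_d - e_r \otimes e_r - e_z \otimes e_z$ is the orthogonal projection onto the $(d-2)$-dimensional subspace complementary to $\spn(e_r, e_z)$, its trace is $\tr(I_d) - 1 - 1 = d - 2 = k$. This yields the coefficient $k \frac{u_r}{r}$.

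Summing the contributions gives exactly
\begin{equation*}
    \nabla \cdot u = \partial_r u_r + k\frac{u_r}{r} + \partial_z u_z,
\end{equation*}
which is the claimed identity with $k = d-2$. There is no substantive obstacle here; the entire proposition reduces to identifying $\tr(\Tilde{I}_d) = k$, which is precisely why the factor $k$ (rather than the factor $1$ familiar from the three-dimensional case, where $k = 1$) appears in the higher-dimensional divergence. This is the one place where the dimension enters, and it is the source of the $k$ that later propagates into the vorticity equation and the transported quantity $\frac{\omega}{r^k}$, so it is worth flagging explicitly even though the computation itself is routine.
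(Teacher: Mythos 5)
Your proof is correct and follows exactly the paper's argument: compute $\nabla \cdot u = \tr(\nabla u)$ from the expression in Proposition \ref{UgradProp}, with the cross terms vanishing by orthogonality and $\tr\bigl(\Tilde{I}_d\bigr) = d-2 = k$ supplying the key coefficient. Your added remark that this trace is the sole point where the dimension enters is a nice observation, but the mathematical content is the same as the paper's proof.
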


\begin{proof}
Taking the identity from Proposition \ref{UgradProp}, we can see that
\begin{align}
    \nabla\cdot u(x)
    &=
    \tr(\nabla u(x)) \\
    &=
    \partial_ru_r+\partial_zu_z
    +\frac{u_r(r,z)}{r} \tr
    \left(\Tilde{I}_d\right) \\
    &=
    \partial_ru_r(r,z)
    +\partial_zu_z(r,z)
    +(d-2)\frac{u_r(r,z)}{r}.
\end{align}
This completes the proof.
\end{proof}

\begin{proposition} \label{AxisymAdvect}
Suppose $u\in H^s\left(\mathbb{R}^d\right),
s > 1+\frac{d}{2}$, is axisymmetric and swirl-free. Then $(u\cdot\nabla)u$ is axisymmetric and swirl free.
\end{proposition}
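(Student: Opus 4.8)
The plan is to read off the advection term directly from the explicit formula for $\nabla u$ established in Proposition \ref{UgradProp}. With the convention $(\nabla u)_{ij}=\partial_i u_j$ used there, the advection term is the contraction $(u\cdot\nabla)u=(\nabla u)^\top u$, whose $j$-th component is $\sum_i u_i\partial_i u_j$. Since the hypothesis $s>1+\frac{d}{2}$ guarantees that $\nabla u$ is continuous by Sobolev embedding, every manipulation below is justified pointwise.

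First I would transpose the tensor from Proposition \ref{UgradProp}, which amounts to swapping the factors in each rank-one piece while leaving the symmetric term $\frac{u_r}{r}\Tilde{I}_d$ fixed, and then apply the resulting tensor to $u=u_r e_r+u_z e_z$ using the identity $(a\otimes b)v=(b\cdot v)\,a$ together with the orthonormality relations $e_r\cdot e_r=e_z\cdot e_z=1$ and $e_r\cdot e_z=0$.

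The key observation is that the term carrying the curvature of the radial frame drops out: since $\Tilde{I}_d=I_d-e_r\otimes e_r-e_z\otimes e_z$ is the orthogonal projection onto $\spn\{e_r,e_z\}^\perp$ and $u\in\spn\{e_r,e_z\}$, we have $\Tilde{I}_d u=0$. Consequently the potentially singular coefficient $\frac{u_r}{r}$ never appears in the result, and the contraction collapses to
\begin{equation}
    (u\cdot\nabla)u=\bigl(u_r\partial_r u_r+u_z\partial_z u_r\bigr)e_r+\bigl(u_r\partial_r u_z+u_z\partial_z u_z\bigr)e_z.
\end{equation}

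Both coefficients depend only on $(r,z)$, so the right-hand side has exactly the form $f_r(r,z)e_r+f_z(r,z)e_z$, which is by definition an axisymmetric, swirl-free field, finishing the proof. This is essentially a routine computation, and the only genuine point to watch is the annihilation $\Tilde{I}_d u=0$: this vanishing is precisely what prevents the advection from generating a component transverse to $\spn\{e_r,e_z\}$, and hence is what keeps the class swirl-free. As an independent check, I would also note that the same identity is obtained more quickly by observing that $u\cdot\nabla=u_r\partial_r+u_z\partial_z$ and $\partial_r e_r=\partial_z e_r=0$, so that the directional-derivative operator simply acts componentwise in the $(r,z)$ plane.
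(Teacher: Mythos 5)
Your proof is correct and takes essentially the same route as the paper: both arguments transpose the tensor formula from Proposition \ref{UgradProp}, contract it with $u$, and rely on the key annihilation $\Tilde{I}_d u=0$ (since $\Tilde{I}_d$ projects onto $\spn\{e_r,e_z\}^\perp$ and $u\in\spn\{e_r,e_z\}$) to kill the $\frac{u_r}{r}$ term. The resulting expression $(u\cdot\nabla u_r)e_r+(u\cdot\nabla u_z)e_z$, with coefficients depending only on $(r,z)$, is exactly the paper's conclusion.
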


\begin{proof}
We know by hypothesis that
\begin{equation}
    u(x)=u_r(r,z)e_r+u_z(r,z)e_z
\end{equation}
and by Proposition \ref{UgradProp} that
\begin{multline}
        \nabla u(x)=
    \partial_ru_r(r,z)e_r\otimes e_r
    +\frac{u_r(r,z)}{r} \Tilde{I}_d
    +\partial_z u_z(r,z)e_z\otimes e_z \\
    +\partial_ru_z(r,z) e_r\otimes e_z
    +\partial_zu_r(r,z) e_z\otimes e_r.
\end{multline}
We can see that $\Tilde{I}_d$ is the matrix for the projection onto the subspace $\spn(e_r,e_z)^\perp$,
and so
\begin{align}
    (u\cdot \nabla)u
    &=
    (\nabla u)^{tr}u \\
    &=
    u_r\partial_r u_r e_r
    +u_z \partial_z u_z e_z
    +u_r \partial_r u_z e_z
    + u_z\partial_z u_r e_r \\
    &=
    (u\cdot\nabla u_r) e_r+
    (u\cdot\nabla u_z) e_z.
\end{align}
Recalling the definition of an axisymmetric vector field, this completes the proof.
\end{proof}

\begin{proposition}
Suppose $u\in H^s\left(\mathbb{R}^d\right),
s > 1+\frac{d}{2}$, is axisymmetric and swirl-free.
Then
\begin{equation}
    A(x)=\frac{1}{2}\omega(r,z)
    (e_r\otimes e_z-e_z\otimes e_r),
\end{equation}
where $\omega(r,z)=
\partial_r u_z-\partial_z u_r.$
\end{proposition}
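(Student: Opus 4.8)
The plan is to obtain $A$ directly as the antisymmetric part of the gradient tensor already computed in Proposition \ref{UgradProp}, rather than recomputing anything from scratch. Recall that the index convention in force (the one used implicitly in the proof of Proposition \ref{AxisymAdvect}, where $(u\cdot\nabla)u=(\nabla u)^{tr}u$) is $(\nabla u)_{ij}=\partial_i u_j$, so that $A=\frac{1}{2}\left(\nabla u-(\nabla u)^{tr}\right)$, consistent with $A_{ij}=\frac{1}{2}(\partial_i u_j-\partial_j u_i)$. Since transposition acts on a rank-one tensor by $(e_a\otimes e_b)^{tr}=e_b\otimes e_a$ and fixes every symmetric tensor, the strategy is simply to isolate which terms in the expansion of $\nabla u$ are symmetric---these are annihilated by $\frac{1}{2}(\,\cdot-(\cdot)^{tr})$ and contribute nothing to $A$---and then reassemble the surviving cross terms.

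Concretely, I would start from the identity of Proposition \ref{UgradProp},
\begin{multline}
    \nabla u(x)=
    \partial_ru_r\, e_r\otimes e_r
    +\frac{u_r}{r} \Tilde{I}_d
    +\partial_z u_z\, e_z\otimes e_z \\
    +\partial_ru_z\, e_r\otimes e_z
    +\partial_zu_r\, e_z\otimes e_r,
\end{multline}
and observe that the first three terms are symmetric: $e_r\otimes e_r$ and $e_z\otimes e_z$ are symmetric rank-one tensors, and $\Tilde{I}_d=I_d-e_r\otimes e_r-e_z\otimes e_z$ is the matrix of the orthogonal projection onto $\spn(e_r,e_z)^\perp$ (as noted in the proof of Proposition \ref{AxisymAdvect}), hence symmetric as well. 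These three terms therefore drop out of the antisymmetric part entirely.

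It then remains only to handle the two cross terms. Using $(e_r\otimes e_z)^{tr}=e_z\otimes e_r$, one computes
\begin{align}
    \nabla u-(\nabla u)^{tr}
    &=
    \partial_r u_z\,(e_r\otimes e_z-e_z\otimes e_r)
    +\partial_z u_r\,(e_z\otimes e_r-e_r\otimes e_z) \\
    &=
    (\partial_r u_z-\partial_z u_r)(e_r\otimes e_z-e_z\otimes e_r),
\end{align}
and dividing by two while inserting the definition $\omega=\partial_r u_z-\partial_z u_r$ gives exactly the claimed formula for $A$.

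There is essentially no substantive obstacle here; the proposition is an immediate corollary of Proposition \ref{UgradProp} once the symmetry of $\Tilde{I}_d$ is recorded. The only two points demanding care are fixing the index/sign convention $(\nabla u)_{ij}=\partial_i u_j$ consistently, so that $A$ is genuinely the antisymmetric part $\frac{1}{2}(\partial_i u_j-\partial_j u_i)$ and the overall factor of $\frac{1}{2}$ is correct, and verifying that the two non-symmetric terms recombine with the proper sign into the single scalar coefficient $\omega$ multiplying $e_r\otimes e_z-e_z\otimes e_r$.
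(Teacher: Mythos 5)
Your proposal is correct and follows essentially the same route as the paper: the paper's proof likewise takes the expansion of $\nabla u$ from Proposition \ref{UgradProp}, discards the symmetric terms, and antisymmetrizes the two cross terms to obtain $A=\frac{1}{2}(\partial_ru_z-\partial_zu_r)(e_r\otimes e_z-e_z\otimes e_r)$. Your explicit remarks on the convention $(\nabla u)_{ij}=\partial_i u_j$ and the symmetry of $\Tilde{I}_d$ are points the paper leaves implicit, but the argument is the same.
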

\begin{proof}
Taking the formula for $\nabla u$ from Proposition \ref{UgradProp},
we can see that
\begin{align}
    A(x)
    &=
    \frac{1}{2}\left(
    \partial_ru_z e_r\otimes e_z
    +\partial_zu_r e_z\otimes e_r
    -\partial_ru_z e_z\otimes e_r
    -\partial_zu_r e_r\otimes e_z
    \right) \\
    &=
    \frac{1}{2}(\partial_ru_z-\partial_zu_r)
    (e_r\otimes e_z-e_z\otimes e_r).
\end{align}
This completes the proof.
\end{proof}

\begin{proposition} \label{ErDiv}
For all $d\geq 3$, the divergence of the radial direction $e_r\in\mathbb{R}^d$ is
\begin{equation}
    \nabla\cdot e_r=\frac{k}{r}.
\end{equation}
\end{proposition}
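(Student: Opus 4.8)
The plan is to reduce the claim to the gradient computation already carried out in the proof of Proposition~\ref{UgradProp}. There it is established that the radial unit vector satisfies
\begin{equation}
    \nabla e_r = \frac{1}{r}\Tilde{I}_d,
\end{equation}
where $\Tilde{I}_d = I_d - e_r\otimes e_r - e_z\otimes e_z$. Since the divergence is the trace of the gradient, I would simply take the trace of both sides, obtaining $\nabla\cdot e_r = \tr(\nabla e_r) = \frac{1}{r}\tr(\Tilde{I}_d)$. The matrix $\Tilde{I}_d$ is the orthogonal projection onto $\spn(e_r,e_z)^\perp$, a $(d-2)$-dimensional subspace, so its trace equals $d-2 = k$, which yields $\nabla\cdot e_r = \frac{k}{r}$ as claimed. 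This argument is understood pointwise for $r>0$, where $e_r$ is defined; the singularity on the axis $r=0$ poses no difficulty since the identity is stated away from it.

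Alternatively, one can give a fully self-contained direct computation that avoids invoking the earlier proposition. Writing $e_r = \frac{x'}{r}$ with $x'=(x_1,\dots,x_{d-1},0)$ and $r=|x'|$, only the first $d-1$ components are nonzero, and using $\partial_i r = \frac{x_i}{r}$ together with $\sum_{i=1}^{d-1}x_i^2 = r^2$, one finds
\begin{equation}
    \nabla\cdot e_r = \sum_{i=1}^{d-1}\partial_i\frac{x_i}{r}
    = \sum_{i=1}^{d-1}\left(\frac{1}{r}-\frac{x_i^2}{r^3}\right)
    = \frac{d-1}{r}-\frac{r^2}{r^3} = \frac{d-2}{r} = \frac{k}{r}.
\end{equation}

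There is essentially no obstacle here; the result is a routine vector-calculus identity, and both routes are short. The only point worth flagging is the bookkeeping of the dimension count: the factor $k=d-2$ arises because $e_r$ points along one of the $d-1$ directions transverse to the axis, while its normalization removes one radial degree of freedom, leaving exactly $d-2$ transverse directions contributing to the spreading of the field. I would present the trace argument as the primary proof for brevity, since it directly reuses the computation $\nabla e_r = \frac{1}{r}\Tilde{I}_d$ already in hand, and mention the direct calculation only as a check.
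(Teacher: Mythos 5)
Your proposal is correct, and in fact it contains the paper's own proof as your ``alternative'' route: the paper proves Proposition~\ref{ErDiv} exactly by the product rule applied to $e_r = x'/|x'|$, writing $\nabla\cdot e_r = \frac{\nabla\cdot x'}{|x'|} + x'\cdot\nabla\frac{1}{|x'|} = \frac{d-1}{r} - \frac{1}{r} = \frac{d-2}{r}$, which is precisely your componentwise computation in vector form. Your primary route --- taking the trace of the identity $\nabla e_r = \frac{1}{r}\Tilde{I}_d$ established inside the proof of Proposition~\ref{UgradProp} --- is a legitimate and slightly different packaging of the same content. It is not circular, since Proposition~\ref{UgradProp} precedes Proposition~\ref{ErDiv} and its computation of $\nabla e_r$ nowhere uses the divergence identity; and it correctly sidesteps the fact that $e_r\notin H^s$ by invoking the pointwise identity for $r>0$ rather than the proposition's statement about $H^s$ fields. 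What the trace route buys is economy and consistency: it mirrors exactly how the paper derives Proposition~\ref{DivProp} (divergence as trace of the gradient formula), so the two divergence results flow from a single computation of $\nabla e_r$. What the paper's direct route buys is self-containedness: Proposition~\ref{ErDiv} can then be cited on its own (as it is, repeatedly, in the proof of Proposition~\ref{VectorLaplace}) without tracking a dependency on the earlier gradient formula. Either presentation is acceptable; your dimension-count bookkeeping ($\tr(\Tilde{I}_d) = d-2$ because two orthogonal directions $e_r$ and $e_z$ are projected out) is exactly right.
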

\begin{proof}
Recall that
\begin{equation}
    e_r=\frac{x'}{|x'|}.
\end{equation}
It then follows that 
\begin{align}
    \nabla\cdot e_r
    &=
    \frac{\nabla \cdot x'}{|x'|}
    +x'\cdot \nabla\frac{1}{|x'|} \\
    &=
    \frac{d-1}{r}-x'\cdot 
    \frac{x'}{|x'|^3} \\
    &=
    \frac{d-2}{r}.
\end{align}
This completes the proof.
\end{proof}

\begin{proposition} \label{VectorLaplace}
Suppose $u\in H^s\left(\mathbb{R}^d;
\mathbb{R}^d\right), s> 2+\frac{d}{2}$
is an axisymmetric, swirl-free vector field. 
Then the vector Laplacian can be expressed as
\begin{equation}
    -\Delta u(x)
    =
    \left(-\partial_r^2-\frac{k}{r}\partial_r
    +\frac{k}{r^2}-\partial_z^2\right)
    u_r(r,z) e_r
    +\left(-\partial_r^2
    -\frac{k}{r}\partial_r
    -\partial_z^2\right)
    u_z(r,z) e_z
\end{equation}
\end{proposition}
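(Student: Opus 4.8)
The plan is to exploit the decomposition $u = u_r e_r + u_z e_z$ and compute the vector Laplacian of each summand separately, using the Leibniz rule for $\Delta$ acting on a product of a scalar and a vector field. The key structural observation is that $e_z = e_d$ is a constant vector field whereas $e_r$ is not; consequently the two summands behave quite differently, and the entire discrepancy between the two scalar operators appearing in the statement—namely the extra $\frac{k}{r^2}$ term in the $e_r$ component—will arise solely from the derivatives of the radial frame $e_r$.

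First I would dispose of the $e_z$ term. Since $e_z$ is constant, $\Delta(u_z e_z) = (\Delta u_z)\,e_z$, where $\Delta u_z$ is the ordinary scalar Laplacian applied to the axisymmetric function $u_z(r,z)$. Writing $\Delta = \Delta_{x'} + \partial_z^2$ with $x' \in \mathbb{R}^{d-1}$ and invoking the radial Laplacian in $\mathbb{R}^{d-1}$, one obtains $\Delta u_z = \left(\partial_r^2 + \frac{d-2}{r}\partial_r + \partial_z^2\right)u_z = \left(\partial_r^2 + \frac{k}{r}\partial_r + \partial_z^2\right)u_z$, which after negation is exactly the $e_z$ component of the claimed formula.

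For the $e_r$ term I would apply the identity $\Delta(fV) = (\Delta f)V + 2(\nabla f \cdot \nabla)V + f\,\Delta V$ with $f = u_r$ and $V = e_r$. The term $(\Delta u_r)e_r$ reproduces the same scalar radial operator as above. The cross term $2(\nabla u_r \cdot \nabla)e_r$ vanishes: since $u_r = u_r(r,z)$ we have $\nabla u_r = \partial_r u_r\,e_r + \partial_z u_r\,e_z \in \spn(e_r,e_z)$, while $\nabla e_r = \frac{1}{r}\Tilde{I}_d$, computed in the proof of Proposition \ref{UgradProp}, is the projection onto $\spn(e_r,e_z)^\perp$; hence $(\nabla u_r \cdot \nabla)e_r = \frac{1}{r}\Tilde{I}_d(\nabla u_r) = 0$ by symmetry of $\nabla e_r$. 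The final term requires $\Delta e_r$, which I would compute directly from $e_r = x'/|x'|$: a short evaluation of $\sum_{i=1}^{d-1}\partial_i^2(x_j/r)$ gives $\Delta(x_j/r) = -k\,x_j/r^3$, that is $\Delta e_r = -\frac{k}{r^2}e_r$. Assembling the three pieces yields $\Delta(u_r e_r) = \left(\partial_r^2 + \frac{k}{r}\partial_r - \frac{k}{r^2} + \partial_z^2\right)u_r\,e_r$, whose negation is precisely the $e_r$ component in the statement.

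I expect the main obstacle to be the bookkeeping around the non-constant frame $e_r$, specifically verifying $\Delta e_r = -\frac{k}{r^2}e_r$ and confirming that the cross term is genuinely zero. Both reduce to the single fact that $\nabla e_r = \frac{1}{r}\Tilde{I}_d$ annihilates $\spn(e_r,e_z)$ and contributes the factor $k = d-2$ only through the trace-type contraction over the $(d-1)$ transverse directions; once this is in hand, the remainder is routine differentiation, and the characteristic $\frac{k}{r^2}$ curvature correction in the radial component emerges automatically.
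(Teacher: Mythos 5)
Your proof is correct, but it is organized differently from the paper's. The paper computes $-\Delta u=-\divr\nabla u$ by plugging in the full tensor formula for $\nabla u$ from Proposition \ref{UgradProp} and then taking divergences of the tensor terms $\partial_r u_r\, e_r\otimes e_r$ and $\frac{u_r}{r}\Tilde{I}_d$ with the help of Proposition \ref{ErDiv} ($\nabla\cdot e_r=\frac{k}{r}$); the whole computation stays at the level of second-rank tensors. You instead split $u=u_r e_r+u_z e_z$ and apply the three-term Leibniz rule $\Delta(fV)=(\Delta f)V+2(\nabla f\cdot\nabla)V+f\,\Delta V$ to each summand, which replaces the tensor-divergence bookkeeping by three scalar facts: the radial Laplacian in $\mathbb{R}^{d-1}$ gives $\Delta u_z=(\partial_r^2+\frac{k}{r}\partial_r+\partial_z^2)u_z$, the cross term dies because $\nabla u_r\in\spn(e_r,e_z)$ while $\nabla e_r=\frac{1}{r}\Tilde{I}_d$ annihilates that span, and the frame Laplacian is $\Delta e_r=-\frac{k}{r^2}e_r$ (your componentwise computation of $\sum_{i=1}^{d-1}\partial_i^2(x_j/r)=-k x_j/r^3$ checks out). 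What your route buys is self-containedness and a cleaner isolation of where the curvature term $\frac{k}{r^2}$ comes from — it is exactly $-u_r\Delta e_r$ — whereas the paper's route buys economy within its own development, since it reuses the already-proved gradient formula and divergence identity rather than introducing a new identity for $\Delta e_r$. Both arguments share the same underlying inputs (the structure of $\nabla e_r$ and the count $k=d-2$ over the transverse directions), and both, like the paper, implicitly work away from the axis $r=0$, so neither has an advantage on that score.
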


\begin{proof}
Taking the divergence of the gradient formula in Proposition \ref{UgradProp}, using the fact that $-\Delta u=-\divr\nabla u$, and applying Proposition \ref{ErDiv}, we find that
\begin{equation*}
    -\Delta u
    =
    -\partial_z^2 u_r e_r -\partial_z^2 u_z e_z
    -\partial_r^2 u_z e_z 
    -\frac{k}{r}\partial_r u_z e_z
    -\divr\left(\partial_ru_r
    e_r\otimes e_r\right)
    -\divr \left(\frac{u_r}{r}\Tilde{I}_d\right).
\end{equation*}
Again applying Proposition \ref{ErDiv},
we can see that
\begin{align}
    -\divr
    \left(\partial_ru_r
    (e_r\otimes e_r)\right)
    &=
    \left(-\partial_r^2 u_r
    -\frac{k}{r}\partial_r u_r
    \right)e_r,
\end{align}
and that
\begin{align}
    -\divr\left(\frac{u_r}{r}\Tilde{I}_d\right)
    &=
    -\left(\partial_r\frac{u_r}{r}\right)
    \Tilde{I_d}e_r
    -\frac{u_r}{r}\divr\left(
    I_d-e_r\otimes e_r-e_z\otimes e_z\right)\\
    &=
    \frac{k}{r^2}u_r e_r.
\end{align}
This completes the proof.
\end{proof}

\begin{proposition}
Suppose $f\in H^s, s> 2+\frac{d}{2}$,
is a scalar axisymmetric function with $f(x)=\Tilde{f}(r,z)$.
Then
\begin{equation}
    \nabla f(x)=\partial_r\Tilde{f}(r,z)e_r
    +\partial_z\Tilde{f}(r,z)e_z,
\end{equation}
and
\begin{equation}
    -\Delta f(x)
    =
    -\partial_r^2 \Tilde{f}(r,z)
    -\frac{k}{r}\partial_r \Tilde{f}(r,z)
    -\partial_z^2 \Tilde{f}(r,z).
\end{equation}
\end{proposition}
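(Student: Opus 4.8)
The plan is to obtain both identities by direct computation, exactly parallel to the vector-valued case in Proposition~\ref{VectorLaplace}: I would first prove the gradient formula via the chain rule, then treat $\nabla f$ as an axisymmetric, swirl-free vector field and take its divergence. Under the hypothesis $s>2+\frac{d}{2}$, the embedding $H^s\hookrightarrow C^2$ recorded earlier guarantees that $f$ is twice continuously differentiable, so the chain and product rules may be applied classically.

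For the gradient, since $f(x)=\Tilde{f}(r,z)$ with $r=|x'|$ and $z=x_d$, and since $\nabla r=e_r$ (as computed in the proof of Proposition~\ref{UgradProp}) while $\nabla z=e_z$, the chain rule gives
\begin{equation}
    \nabla f=\partial_r\Tilde{f}\,\nabla r+\partial_z\Tilde{f}\,\nabla z=\partial_r\Tilde{f}\,e_r+\partial_z\Tilde{f}\,e_z.
\end{equation}

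For the Laplacian I would write $-\Delta f=-\divr(\nabla f)$ and set $g=\partial_r\Tilde{f}$ and $h=\partial_z\Tilde{f}$, both of which are again axisymmetric scalar functions, so that $\nabla f=g\,e_r+h\,e_z$. Using the product rule $\divr(g\,e_r)=\nabla g\cdot e_r+g\,\divr e_r$, applying the gradient formula just proved to $g$ to get $\nabla g\cdot e_r=\partial_r g=\partial_r^2\Tilde{f}$, and invoking Proposition~\ref{ErDiv} for $\divr e_r=\frac{k}{r}$, one finds
\begin{equation}
    \divr(g\,e_r)=\partial_r^2\Tilde{f}+\frac{k}{r}\partial_r\Tilde{f}.
\end{equation}
Since $e_z=e_d$ is constant, $\divr e_z=0$, and the same product rule gives $\divr(h\,e_z)=\partial_z h=\partial_z^2\Tilde{f}$. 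Adding the two contributions and negating yields the claimed expression for $-\Delta f$.

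I do not anticipate a serious obstacle: the entire argument is a routine application of the chain and product rules together with the elementary identities $\nabla r=e_r$ and $\divr e_r=\frac{k}{r}$. The only point requiring a word of care is the behavior at the axis $r=0$, where $e_r$ and $\frac{k}{r}$ are singular. As in Proposition~\ref{VectorLaplace}, the identities hold classically for $r>0$, and since $\partial_r\Tilde{f}\to 0$ as $r\to 0$ for a $C^2$ axisymmetric function, the term $\frac{k}{r}\partial_r\Tilde{f}$ remains bounded and the formula extends continuously to the axis.
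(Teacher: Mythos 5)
Your proposal is correct and follows essentially the same route as the paper: the gradient via the chain rule using $\nabla r=e_r$, and then the Laplacian obtained by taking the divergence of that gradient field. The only cosmetic difference is that the paper invokes Proposition \ref{DivProp} (the axisymmetric divergence formula) in one step, whereas you re-derive that computation from the product rule together with Proposition \ref{ErDiv}.
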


\begin{proof}
Observe that
\begin{equation}
    \nabla r=e_r,
\end{equation}
and so applying the chain rule,
\begin{equation}
    \nabla f(x)=\partial_r\Tilde{f}(r,z)e_r
    +\partial_z\Tilde{f}(r,z)e_z.
\end{equation}
Taking the divergence of this equation and applying Proposition \ref{DivProp}, this completes the proof.
\end{proof}

\begin{proposition}
\label{LaplaceCalculation}
Suppose $u\in
H^s_{as}\left(\mathbb{R}^d\right),
s> 2+\frac{d}{2}$. Then the vector Laplacian can be expressed as
\begin{equation}
    -\Delta u
    =
    -2\divr\nabla_{asym}u
    -\nabla \nabla\cdot u.
\end{equation}
\end{proposition}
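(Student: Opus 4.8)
The plan is to observe that this identity is purely algebraic and holds for \emph{any} vector field whose second derivatives are continuous; the axisymmetry hypothesis plays no role beyond guaranteeing, via $s>2+\frac{d}{2}$ and the embedding $H^s\hookrightarrow C^2$, that all the derivatives below exist classically and that the mixed partials commute. I would therefore prove the identity componentwise in Cartesian coordinates rather than manipulate the cylindrical expressions from Proposition~\ref{VectorLaplace}.

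First I would fix conventions by reading them off the earlier results. Consistent with the relation $(u\cdot\nabla)u=(\nabla u)^{tr}u$ used in the proof of Proposition~\ref{AxisymAdvect}, the gradient is indexed by $(\nabla u)_{ij}=\partial_i u_j$, so the antisymmetric part is $A_{ij}=\frac{1}{2}(\partial_i u_j-\partial_j u_i)$. The matrix divergence contracts the first index, $(\divr M)_j=\sum_i\partial_i M_{ij}$, which is the convention that makes $\divr\nabla u=\Delta u$, as used implicitly in Proposition~\ref{VectorLaplace}.

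The computation is then a single line. For each $j$,
\begin{equation*}
(\divr\nabla_{asym}u)_j
=\sum_i\partial_i A_{ij}
=\frac{1}{2}\sum_i\left(\partial_i^2 u_j-\partial_i\partial_j u_i\right)
=\frac{1}{2}\left(\Delta u_j-\partial_j(\nabla\cdot u)\right),
\end{equation*}
where interchanging the mixed partials is justified by $u\in C^2$. In vector form this reads $2\divr\nabla_{asym}u=\Delta u-\nabla(\nabla\cdot u)$, and rearranging yields $-\Delta u=-2\divr\nabla_{asym}u-\nabla\nabla\cdot u$, which is exactly the claim.

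There is essentially no analytic obstacle; the only thing to watch is bookkeeping, since the factor of $\frac{1}{2}$ in the definition of $A$ and the choice of which index the matrix divergence contracts both affect the constants and signs in the final formula. As an independent consistency check one could instead verify the identity in cylindrical coordinates, inserting $A=\frac{1}{2}\omega(e_r\otimes e_z-e_z\otimes e_r)$ together with Propositions~\ref{VectorLaplace} and~\ref{DivProp} and confirming that the $e_r$ and $e_z$ components agree; but this route is strictly more laborious and yields no extra insight.
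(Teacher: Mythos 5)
Your proof is correct, but it takes a genuinely different route from the paper's. The paper proves the identity entirely in cylindrical coordinates: it computes $\nabla\nabla\cdot u$ from the formula $\nabla\cdot u=\partial_r u_r+\frac{k}{r}u_r+\partial_z u_z$ of Proposition~\ref{DivProp}, computes $2\divr\nabla_{asym}u$ from $2\nabla_{asym}u=(\partial_r u_z-\partial_z u_r)(e_r\otimes e_z-e_z\otimes e_r)$ using the product rule $\divr(fM)=f\divr(M)+M^{tr}\nabla f$ and Proposition~\ref{ErDiv}, and then adds the two displays term by term to recover the cylindrical expression for $-\Delta u$ from Proposition~\ref{VectorLaplace}. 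Your Cartesian index computation, by contrast, shows in one line that
\begin{equation*}
2(\divr\nabla_{asym}u)_j=\sum_i\left(\partial_i^2u_j-\partial_i\partial_ju_i\right)=\Delta u_j-\partial_j(\nabla\cdot u),
\end{equation*}
so the identity is purely algebraic and holds for \emph{any} $C^2$ vector field, with axisymmetry playing no role; your conventions ($(\nabla u)_{ij}=\partial_iu_j$, divergence contracting the first index) are the ones the paper uses, as your cross-check against Propositions~\ref{AxisymAdvect} and~\ref{BiotSavart} confirms. What each buys: your argument is shorter, more general, and makes transparent why the paper can later extend the same identity to all $s\in\mathbb{R}$ via the Fourier multipliers in Proposition~\ref{AxisymHelmholtz}, where exactly this index manipulation reappears on the Fourier side; the paper's cylindrical computation is more laborious but doubles as a consistency check on the coordinate formulas (Propositions~\ref{UgradProp}--\ref{VectorLaplace}) that are the real workhorses of the rest of the paper, and it keeps all quantities in the $(r,z)$ variables in which the vorticity and stream-function equations are subsequently derived.
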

\begin{proof}
First note that for $s>2+\frac{d}{2},
H^s_{as}\left(\mathbb{R}^d\right)
\hookrightarrow 
C^2\left(\mathbb{R}^d\right)$,
so the both sides of the expression are well defined for all $x\in\mathbb{R}^d$.
Let
\begin{equation}
    u(x)=u_r(r,z)e_r+u_z(r,z)e_z
\end{equation}
Recalling that 
\begin{equation}
    \nabla\cdot u=
    \partial_ru_r
    +\frac{k}{r}u_r
    +\partial_z u_z,
\end{equation}
we can compute that
\begin{equation} \label{GradStep}
    \nabla \nabla \cdot u
    =
    \left(\partial_r^2+\frac{k}{r}\partial_r
    -\frac{k}{r^2}\right) u_re_r
    +\partial_z^2 u_z e_z
    +\partial_z\left(\partial_r u_r+\frac{k}{r}u_r\right) e_z
    +\partial_r\partial_zu_z e_r.
\end{equation}
Likewise recalling that 
\begin{equation}
    2\nabla_{asym}u
    =
    (\partial_r u_z-\partial_z u_r)
    (e_r\otimes e_z- e_z\otimes e_r),
\end{equation}
and applying the product rule
\begin{equation}
    \divr\left(fM\right)
    =
    f\divr(M)+M^{tr}\nabla f,
\end{equation}
we find that
\begin{align}
    2\divr\nabla_{asym}u
    &=
    \frac{k}{r}
    (\partial_r u_z-\partial_z u_r)e_z
    +\partial_r(\partial_r u_z
    -\partial_zu_r)e_z
    -\partial_z (\partial_r u_z
    -\partial_zu_r)e_r \\
    &= \label{DivFreeStep}
    \partial_z^2 u_r e_r
    +\left(\partial_r^2+\frac{k}{r}
    \partial_r\right)u_z e_z
    -\frac{k}{r}\partial_z u_r e_z
    -\partial_r\partial_z u_r e_z
    -\partial_z\partial_r u_z e_r.
\end{align}
Adding together \eqref{GradStep} and \eqref{DivFreeStep}, this completes the proof.
\end{proof}

\begin{remark}
Note that the operators $-\Delta, \divr\nabla_{asym}, \nabla\nabla\cdot, (-\Delta)^{-1}$ all preserve the class of axisymmetric, swirl-free vector fields. For the first three operators, this can be observed directly from the computations above. For $(-\Delta)^{-1}$, the result follows in any space where the kernel of the Laplace operator is trivial from the fact that $-\Delta$ preserves this class.
\end{remark}

\begin{proposition} \label{AxisymHelmholtz}
Axisymmetric, swirl free vector fields in dimension $d\geq 3$ have the following Helmholtz decomposition:
\begin{equation}
    H^s_{as}\left(\mathbb{R}^d\right)
    =
    H^s_*\left(\mathbb{R}^d\right)
    \oplus 
    H^s_{as\&gr}\left(\mathbb{R}^d\right),
\end{equation}
for all $s\in\mathbb{R}$.
In particular, for all $v\in H^s_{as}$,
\begin{equation} \label{HelmholtzID}
    v=-2\divr\nabla_{asym}(-\Delta)^{-1}v
    -\nabla \nabla\cdot(-\Delta)^{-1}v, 
\end{equation}
with $-2\divr\nabla_{asym}(-\Delta)^{-1}v
\in H^s_*\left(\mathbb{R}^d\right)$
and
$-\nabla \nabla\cdot(-\Delta)^{-1}v
\in H^s_{as\&gr}\left(\mathbb{R}^d\right)$.
\end{proposition}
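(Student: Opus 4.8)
The plan is to obtain both the decomposition and the identity \eqref{HelmholtzID} directly from the operator identity of Proposition \ref{LaplaceCalculation}. For $v \in H^s_{as}$ I would set $w = (-\Delta)^{-1}v$ and apply Proposition \ref{LaplaceCalculation} to $w$; since $-\Delta w = v$, this yields $v = -2\divr\nabla_{asym}(-\Delta)^{-1}v - \nabla\nabla\cdot(-\Delta)^{-1}v$, which is precisely \eqref{HelmholtzID}. I would then identify the two summands with the standard Helmholtz projections via their Fourier symbols: $-\nabla\nabla\cdot(-\Delta)^{-1}$ acts as multiplication by $\frac{\xi\xi^{T}}{|\xi|^2}$ and $-2\divr\nabla_{asym}(-\Delta)^{-1}$ as multiplication by $I - \frac{\xi\xi^{T}}{|\xi|^2}$. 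Since these symbols project onto $\spn(\xi)$ and $\spn(\xi)^{\perp}$ respectively, the Fourier characterization of $H^s_{gr}$ and $H^s_{df}$ recorded earlier shows $-\nabla\nabla\cdot(-\Delta)^{-1}v \in H^s_{gr}$ and $-2\divr\nabla_{asym}(-\Delta)^{-1}v \in H^s_{df}$.

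Next I would invoke the remark preceding the statement, which records that $(-\Delta)^{-1}$, $\divr\nabla_{asym}$, and $\nabla\nabla\cdot$ each preserve the class of axisymmetric, swirl-free vector fields. Applied to the two summands above, this places them in $H^s_{as}$ as well, so that $-2\divr\nabla_{asym}(-\Delta)^{-1}v \in H^s_{as\&df}$ and $-\nabla\nabla\cdot(-\Delta)^{-1}v \in H^s_{as\&gr}$. This gives $H^s_{as} = H^s_{as\&df} + H^s_{as\&gr}$, and the sum is direct because $H^s_{as\&df}\cap H^s_{as\&gr} \subseteq H^s_{df}\cap H^s_{gr} = \{0\}$ by the standard Helmholtz decomposition.

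The main obstacle is that Proposition \ref{LaplaceCalculation} and the pointwise computations behind the preceding remark are valid only for $s > 2 + \frac{d}{2}$, whereas the claim is asserted for all $s \in \mathbb{R}$; I would bridge this gap by density and continuity. Writing $P_{df} = -2\divr\nabla_{asym}(-\Delta)^{-1}$ and $P_{gr} = -\nabla\nabla\cdot(-\Delta)^{-1}$, the symbol computation above shows these are Fourier multipliers by orthogonal projections, hence bounded by $1$ pointwise, so each is bounded on $H^s$ for every $s \in \mathbb{R}$. Given $v \in H^s_{as}$, I would approximate it in $H^s$ by the frequency truncations $v_n$ with $\hat v_n = \mathbf 1_{\{|\xi|\le n\}}\hat v$; because $\mathbf 1_{\{|\xi|\le n\}}$ is a radial multiplier it commutes with the rotation symmetry characterizing axisymmetric, swirl-free fields, so each $v_n$ remains in $H^s_{as}$ and, having compactly supported spectrum, belongs to $H^{s'}$ for all $s'$. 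The high-regularity case already proved gives $P_{df}v_n \in H^{s'}_{as\&df}$ and $P_{gr}v_n \in H^{s'}_{as\&gr}$; letting $n\to\infty$ and using boundedness of the projections, $P_{df}v_n \to P_{df}v$ and $P_{gr}v_n \to P_{gr}v$ in $H^s$. Since $H^s_{as\&df}$ and $H^s_{as\&gr}$ are closed subspaces of $H^s$, the limits lie in them, extending \eqref{HelmholtzID} and the decomposition to all $s \in \mathbb{R}$. The one point requiring care is verifying that frequency truncation genuinely preserves the axisymmetric, swirl-free structure and that the two target subspaces are closed; both reduce to the fact that the defining constraints are preserved under $L^2$-convergent limits.
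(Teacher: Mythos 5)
Your proposal is correct, and its core is the same as the paper's: both derive the identity \eqref{HelmholtzID} by applying Proposition \ref{LaplaceCalculation} to $(-\Delta)^{-1}v$, and both identify the two operators with the Helmholtz projections through their Fourier symbols. Where you diverge is in the extension to all $s\in\mathbb{R}$. The paper does this by explicitly \emph{proving} the multiplier identities
$\mathcal{F}\left(-2\divr\nabla_{asym}(-\Delta)^{-1}v\right)(\xi)=P_{\spn\{\xi\}^\perp}\hat{v}(\xi)$ and
$\mathcal{F}\left(-\nabla\nabla\cdot(-\Delta)^{-1}v\right)(\xi)=P_{\spn\{\xi\}}\hat{v}(\xi)$
(via the decomposition $\hat v(\xi)=w(\xi)+\lambda(\xi)\xi$ with $\xi\cdot w=0$), after which the identity and the $df$/$gr$ memberships hold for every $s$ at once, with no approximation needed; you instead assert the symbols and then run a density argument (radial frequency truncation, boundedness of the projections on $H^s$, closedness of the target subspaces). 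Your route is sound, but note two things. First, once you have the symbols, the density argument is largely redundant for \eqref{HelmholtzID} and for the $df$/$gr$ memberships: the symbols are pointwise orthogonal projections summing to the identity, so those conclusions are immediate in Fourier space for all $s$; the only content the truncation argument genuinely adds is the axisymmetry of the two summands at low regularity, a point the paper treats rather tersely and which you handle more carefully. Second, a small imprecision: equivariance under \emph{rotations} fixing the axis characterizes axisymmetric swirl-free fields only when $d\geq 4$; for $d=3$ rotation-equivariant fields can carry swirl, so to make the truncation step (and the closedness of $H^s_{as}$) airtight for all $d\geq 3$ you should use equivariance under the full orthogonal group $O(d-1)$, including reflections through hyperplanes containing the axis. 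Since the indicator $\mathbf{1}_{\{|\xi|\leq n\}}$ is invariant under that larger group as well, your argument goes through unchanged after this correction.
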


\begin{proof}
We know immediately from the classical Helmholtz decomposition that for all 
$u\in H^s_*,\nabla f\in H^s_{as\&gr}$
\begin{equation}
    \left<u,\nabla f\right>_s=0,
\end{equation}
and so the spaces are orthogonal.
It remains only to show that 
\begin{equation}
    H^s_{as}\left(\mathbb{R}^d\right)
    =
    H^s_*\left(\mathbb{R}^d\right)
    +
    H^s_{as\&gr}\left(\mathbb{R}^d\right)
\end{equation}
We will first consider this result for 
$s> \frac{d}{2}$.
Applying Proposition \ref{LaplaceCalculation} to $(-\Delta)^{-1}v$, we find that
\begin{equation}
    v=-2\divr\nabla_{asym}(-\Delta)^{-1}v
    -\nabla \nabla\cdot(-\Delta)^{-1}v.
\end{equation}
It is obvious that $-\nabla \nabla\cdot(-\Delta)^{-1}v 
\in H^s_{as\&gr}$, because $\nabla\cdot v$ is an axisymmetric, scalar function, so it remains only to show that $-2\divr\nabla_{asym}(-\Delta)^{-1}v
\in H^s_*$.
Letting $A=\nabla_{asym}(-\Delta)^{-1}v$,
we can see that
\begin{align}
    \nabla\cdot\left(
    -2\divr A\right)
    &=
    -2\divr^2 A \\
    &=
    -2\sum_{i,j=1}^d
    \partial_i\partial_j A_{ij} \\
    &=0,
\end{align}
due to the anti-symmetry of $A$.
This completes the proof for $s> \frac{d}{2}$.

We should note that the formula for the Laplacian in Proposition \ref{LaplaceCalculation} holds pointwise only for $C^2$ vector fields, so $(-\Delta)^{-1}v$ must be at least $C^2$ for this identity to hold classically.
This is why we require that $s>\frac{d}{2}$ for the identity to hold pointwise.
It remains true, however, that $-2\divr\nabla_{asym}(-\Delta)^{-1}$ and $-\nabla\nabla\cdot (-\Delta)^{-1}$ are well defined, bounded, linear operators mapping $H^s \to H^s$, for all $s\in\mathbb{R}$, and so the decomposition is still valid and well defined for $s\leq \frac{d}{2}$.

In order to prove that the identity 
\begin{equation}
    v=-2\divr\nabla_{asym}(-\Delta)^{-1}v
    -\nabla \nabla\cdot(-\Delta)^{-1}v
\end{equation}
holds in $H^s_{as}$ for all $s\in\mathbb{R}$, we will need to work in Fourier space.
We will show that
\begin{align}
    \mathcal{F}\left(-2\divr\nabla_{asym}
    (-\Delta)^{-1}v\right)(\xi)
    &=
    P_{\spn\{\xi\}^\perp}\hat{v}(\xi) \\
    \mathcal{F}\left(-\nabla \nabla\cdot(-\Delta)^{-1}v\right)(\xi)
    &=
    P_{\spn\{\xi\}}\hat{v}(\xi),
\end{align}
almost everywhere $\xi\in\mathbb{R}^d$.
This establishes the decomposition, because it proves both the identity \eqref{HelmholtzID} and also shows that $-2\divr\nabla_{asym}
(-\Delta)^{-1}v\in H^s_*,
-\nabla \nabla\cdot(-\Delta)^{-1}v
\in H^s_{as\&gr}$.
It is simple to compute that
\begin{align}
    \mathcal{F}\left(-\nabla \nabla\cdot(-\Delta)^{-1}v\right)(\xi)
    &=
    \left(\frac{\xi\otimes\xi}{|\xi|^2}\right)
    \hat{v}(\xi) \\
    &=
    P_{\spn\{\xi\}}\hat{v}(\xi),
\end{align}
almost everywhere $\xi\in\mathbb{R}^d$.
We also compute that
\begin{equation}
    \mathcal{F}\left(-2\divr\nabla_{asym}
    (-\Delta)^{-1}v\right)(\xi)
    =\frac{1}{|\xi|^2} \xi\cdot (\xi\otimes \hat{v}(\xi)-
    \hat{v}(\xi)\otimes \xi).
\end{equation}
For all $\xi\in\mathbb{R}^d$, let
\begin{align}
    P_{\spn\{\xi\}^\perp}\hat{v}(\xi)
    &=w(\xi) \\
    P_{\spn\{\xi\}}\hat{v}(\xi)
    &= \lambda(\xi)\xi.
\end{align}
We can immediately see that almost everywhere 
$\xi\in\mathbb{R}^d$,
\begin{equation}
    \xi\otimes \hat{v}(\xi)-
    \hat{v}(\xi)\otimes \xi
    =
    \xi\otimes w(\xi)-
    w(\xi)\otimes \xi.
\end{equation}
By definition, we have $\xi\cdot w(\xi)=0$, almost everywhere $\xi\in\mathbb{R}^d$, and so we can conclude that
\begin{align}
    \xi\cdot (\xi\otimes \hat{v}(\xi)-
    \hat{v}(\xi)\otimes \xi)
    &=
    \xi\cdot (\xi\otimes w(\xi)-
    w(\xi)\otimes \xi) \\
    &=
    |\xi|^2 w(\xi),
\end{align}
and that therefore
\begin{equation}
    \mathcal{F}\left(-2\divr\nabla_{asym}
    (-\Delta)^{-1}v\right)(\xi)
    =
    P_{\spn\{\xi\}^\perp}\hat{v}(\xi).
\end{equation}
This completes the proof.
\end{proof}

\begin{remark}
Note that the above proposition shows that for all $v\in H^s_{as}, 
P_{df}(v)\in H^s_{as}.$
\end{remark}

\subsection{Local existence of solutions}

\begin{theorem} \label{EulerExistence}
For all $u^0\in H^s_{df}\left(\mathbb{R}^d\right), s> 1+\frac{d}{2}, d\geq 3$,
there exists a unique solution of the Euler equation $u\in C\left([0,T_{max}),H^s_{df}\left(
\mathbb{R}^d\right)\right)
\cap C^1\left([0,T_{max}),
H^{s-1}_{df}
\left(\mathbb{R}^d\right)\right)$,
where
\begin{equation}
    T_{max}>\frac{C_{s,d}}
    {\left\|u^0\right\|_{H^s}}.
\end{equation}
Furthermore, if $T_{max}<+\infty$,
\begin{equation}
    \int_0^{T_{max}}
    \|A(\cdot,t)\|_{L^\infty} \diff t
    =+\infty.
\end{equation}
\end{theorem}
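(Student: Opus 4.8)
The plan is to treat this as the standard local well-posedness theory for the Euler equation together with a Beale--Kato--Majda type continuation criterion, with the antisymmetric gradient $A$ playing the role of the three-dimensional vorticity vector. The starting point is the projected form $\partial_t u+P_{df}\left((u\cdot\nabla)u\right)=0$. First I would derive the fundamental energy estimate by applying $(I-\Delta)^{s/2}$ to the equation and pairing against $(I-\Delta)^{s/2}u$ in $L^2$. Since $P_{df}$ is a bounded self-adjoint projection that commutes with $(I-\Delta)^{s/2}$, the projection (pressure) term is harmless, and the only issue is the nonlinear term. The transport structure, combined with a Kato--Ponce commutator estimate and the cancellation of the top-order transport term forced by the divergence-free condition, yields
\[
    \frac{\diff}{\diff t}\|u\|_{H^s}^2
    \leq
    C_{s,d}\|\nabla u\|_{L^\infty}\|u\|_{H^s}^2 .
\]

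To construct solutions I would regularize the equation by a standard device (Friedrichs mollification or Galerkin truncation), obtaining approximate solutions $u^{(n)}$ that satisfy the same a priori bound uniformly in $n$. Since $s-1>\frac{d}{2}$, the embedding $H^{s-1}\hookrightarrow L^\infty$ controls $\|\nabla u\|_{L^\infty}$ by $\|u\|_{H^s}$, so Gr\"onwall gives a uniform existence time $T=T(\|u^0\|_{H^s})>0$ together with uniform $H^s$ bounds; compactness then passes to the limit. Membership in $C\left([0,T),H^s\right)$ and the $C^1$-in-time statement follow from the equation itself, which expresses $\partial_t u=-P_{df}\left((u\cdot\nabla)u\right)\in H^{s-1}$. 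Uniqueness I would obtain from a lower-order estimate: for two solutions $u,v$ the difference $w=u-v$ satisfies $\frac{\diff}{\diff t}\|w\|_{L^2}^2\leq C(\|\nabla u\|_{L^\infty}+\|\nabla v\|_{L^\infty})\|w\|_{L^2}^2$, so Gr\"onwall forces $w\equiv 0$. The local solution then extends to a maximal interval $[0,T_{max})$, and the standard continuation argument shows that if $\int_0^{T}\|\nabla u(\cdot,t)\|_{L^\infty}\,\diff t<+\infty$ then the fundamental estimate keeps $\|u(t)\|_{H^s}$ finite up to $T$, so the solution extends past $T$. Hence $T_{max}<+\infty$ forces $\int_0^{T_{max}}\|\nabla u\|_{L^\infty}\,\diff t=+\infty$.

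The remaining, and hardest, step is to replace $\|\nabla u\|_{L^\infty}$ by $\|A\|_{L^\infty}$. Because $u$ is divergence free, $\nabla\nabla\cdot u=0$, and the identity of Proposition \ref{LaplaceCalculation} gives $-\Delta u=-2\divr A$, so that $\nabla u$ is recovered from $A$ by a matrix of zeroth-order Calder\'on--Zygmund operators (bounded Fourier multipliers homogeneous of degree zero, as in the Fourier computation in the proof of Proposition \ref{AxisymHelmholtz}). Such operators are not bounded on $L^\infty$, so one cannot bound $\|\nabla u\|_{L^\infty}$ directly by $\|A\|_{L^\infty}$; instead I would invoke the Beale--Kato--Majda logarithmic inequality
\[
    \|\nabla u\|_{L^\infty}
    \leq
    C\left(1+\|A\|_{L^\infty}\bigl(1+\log^+\|u\|_{H^s}\bigr)+\|u\|_{L^2}\right),
\]
proved via a Littlewood--Paley decomposition in which the low frequencies are controlled by $\|u\|_{L^2}$, the intermediate blocks by $\|A\|_{L^\infty}$ with the logarithm counting the number of blocks, and the high-frequency tail by $\|u\|_{H^s}$.

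Feeding this inequality into the fundamental estimate produces a logarithmic Gr\"onwall inequality for $y(t)=\log\left(e+\|u(t)\|_{H^s}^2\right)$ of the form $y'(t)\leq C\left(1+\|A\|_{L^\infty}\right)(1+y)$, whose integrating factor shows that $\|u(t)\|_{H^s}$ remains finite whenever $\int_0^{T}\|A\|_{L^\infty}\,\diff t<+\infty$; combined with the previous continuation step this yields the stated criterion. The main obstacle is precisely this logarithmic estimate together with the log-Gr\"onwall bookkeeping: the failure of Calder\'on--Zygmund operators to be bounded on $L^\infty$ is the entire source of difficulty in a Beale--Kato--Majda type theorem, while the surrounding existence, uniqueness, and continuation arguments are routine energy-method machinery.
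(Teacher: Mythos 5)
Your outline is correct and is the classical Kato--Ponce/Beale--Kato--Majda argument: commutator energy estimates, regularization plus $L^2$ uniqueness, a continuation criterion, and the Littlewood--Paley logarithmic inequality that substitutes $\|A\|_{L^\infty}$ for $\|\nabla u\|_{L^\infty}$ (your appeal to Proposition \ref{LaplaceCalculation} is harmless, since the identity $-\Delta u = -2\divr A$ for divergence-free fields holds for general vector fields, not just the axisymmetric class for which that proposition is stated). The paper itself does not prove Theorem \ref{EulerExistence} at all: it is recorded as a known result, with the remark following it attributing the case $d=3$, $s\geq 3$ to \cite{BKM} and the general case $d\geq 3$, $s>1+\frac{d}{2}$ to \cite{KatoPonce}, and those references prove it precisely along the lines you describe.
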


\begin{remark}
This is the classic Beale-Kato-Majda criterion, and was proven in the case $d=3, s\geq 3$ in \cite{BKM}.
Kato and Ponce proved the general case where $d\geq 3, s>1+\frac{d}{2}$ in \cite{KatoPonce}, 
They also proved an analogous regularity criterion in terms of the strain, showing that if $T_{max}<+\infty,$ then
\begin{equation}
    \int_0^{T_{max}}
    \|S(\cdot,t)\|_{L^\infty}
    \diff t=+\infty.
\end{equation}
See also the work of Kato \cite{KatoLocalWP} for details on the local wellposedness theory. For a thorough overview, see section 3.2 in Majda and Bertozzi \cite{MajdaBertozzi}. Note that this gives a definition of finite-time blowup that 
\begin{equation}
    \lim_{t\to T_{max}}\|u(\cdot,t)\|_{H^s}
    =+\infty,
\end{equation}
as it is clear that $T_{max}<+\infty$ implies
\begin{equation}
    \|u(\cdot,t)\|_{H^s}
    \geq \frac{C_{s,d}}{T_{max}-t}.
\end{equation}
\end{remark}

\begin{theorem} \label{AxisymExistence}
For all $u^0\in H^s_*\left(\mathbb{R}^d\right), s> 1+\frac{d}{2}$,
there exists a unique solution of the Euler equation 
$u\in C\left([0,T_{max}),H^s_*\left(
\mathbb{R}^d\right)\right)
\cap C^1\left([0,T_{max}),H^{s-1}_*
\left(\mathbb{R}^d\right)\right).$
Furthermore, if $T_{max}<+\infty$, then
\begin{equation} \label{RegCritEuler}
    \int_0^{T_{max}}
    \|\omega(\cdot,t)\|_{L^\infty} \diff t
    =+\infty.
\end{equation}
\end{theorem}
\begin{proof}
We know from Theorem \ref{EulerExistence} that there must be a smooth solution of the Euler equation
$u \in C\left([0,T_{max}),H^s_{df}\left(
\mathbb{R}^d\right)\right) \cap
C^1\left([0,T_{max}),H^{s-1}_{df}\left(
\mathbb{R}^d\right)\right) $.
Applying Propositions \ref{AxisymAdvect} and \ref{AxisymHelmholtz}, we can see that if $u(\cdot,t)\in H^s_*$, then
$P_{df}((u\cdot\nabla)u)(\cdot,t)
\in H^{s-1}_*$.
Recalling that $\partial_tu+P_{df}((u\cdot\nabla)u)=0$,
we can conclude that if $u(\cdot,t)\in H^s_*$, then
$\partial_t u(\cdot,t)\in H^{s-1}_*$.
This implies the subspace of axisymmetric, swirl-free vector fields is preserved by the dynamics of the Euler equation, although this deals only with the formal level.

To make this rigourous, and not only formal, it is necessary to consider how the local existence for the Euler equation---Theorem \ref{EulerExistence}---is proven. In section 3.2 of \cite{MajdaBertozzi}, this local existence theorem is proven, first by proving local existence for a mollified Euler equation
\begin{equation}
    \partial_t u^\epsilon+
    P_{df}\mathcal{J}_\epsilon \left(\mathcal{J}_\epsilon u^\epsilon\cdot\nabla \mathcal{J}_\epsilon u^\epsilon \right)=0,
\end{equation}
and then by proving strong convergence $u^\epsilon \to u$ in the space $C\left([0,T_{max}),H^s_{df}\left(
\mathbb{R}^d\right)\right) 
\cap C^1\left([0,T_{max}),
H^{s-1}_{df}
\left(\mathbb{R}^d\right)\right)$.
Note that $\mathcal{J}_\epsilon$ is a regularization operator given by convolution with a smooth, positive, compactly supported function of mass $1$.
As long as $\mathcal{J}_\epsilon$ involves convolution with a radial function, then it will preserve the class of axisymmetric, swirl-free vector fields. The Picard iteration then guarantees that $u^\epsilon$ must be axisymmetric and swirl-free. Finally, $u$ must be axisymmetric and swirl-free, because it is trivial that this property is preserved by strong convergence in $H^s$.

Finally, we observe that
\begin{equation}
    \|A\|_{L^\infty}
    =
    \frac{1}{\sqrt{2}}
    \|\omega\|_{L^\infty},
\end{equation}
and this completes the proof.
\end{proof} 

\subsection{The vorticity equation}

Now that the necessary preliminaries are out of the way, we can derive the evolution equation for the vorticity in four and higher dimensions.

\begin{proposition} \label{VortEqProp}
Suppose $u\in C\left([0,T_{max});
H^s_* \left(\mathbb{R}^d\right)\right)
\cap C^1\left([0,T_{max}),H^{s-1}_*
\left(\mathbb{R}^d\right)\right), 
d\geq 3, s>2+\frac{d}{2}$, is a solution of the Euler equation. Then for all 
$0\leq t<T_{max}$, the vorticity satisfies the evolution equation
\begin{equation}
    \partial_t\omega
    +(u\cdot\nabla)\omega
    -k\frac{u_r}{r}\omega=0.
\end{equation}
\end{proposition}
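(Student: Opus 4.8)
The plan is to derive the vorticity equation by starting from the momentum equation in the form $\partial_t u + (u\cdot\nabla)u + \nabla p = 0$ and extracting the scalar $\omega = \partial_r u_z - \partial_z u_r$ from the radial and axial components. First I would use Proposition \ref{AxisymAdvect} to write $(u\cdot\nabla)u = (u\cdot\nabla u_r)e_r + (u\cdot\nabla u_z)e_z$, so that the momentum equation splits into the two scalar equations
\begin{align}
    \partial_t u_r + u\cdot\nabla u_r + \partial_r p &= 0, \label{RadComp}\\
    \partial_t u_z + u\cdot\nabla u_z + \partial_z p &= 0. \label{AxComp}
\end{align}
Since $p$ is axisymmetric, its gradient is $\partial_r p\, e_r + \partial_z p\, e_z$, which is why the pressure contributes only $\partial_r p$ and $\partial_z p$ to the two components respectively. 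Here $u\cdot\nabla = u_r\partial_r + u_z\partial_z$ acts as a scalar advection operator on axisymmetric functions, a fact I would record explicitly since it is used repeatedly.

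Next I would form $\partial_r$ of \eqref{AxComp} minus $\partial_z$ of \eqref{RadComp}. The key feature is that the pressure drops out, because $\partial_r\partial_z p - \partial_z\partial_r p = 0$ by equality of mixed partials (valid since $s > 2 + \tfrac{d}{2}$ guarantees $p$ is $C^2$). The time-derivative term gives $\partial_t(\partial_r u_z - \partial_z u_r) = \partial_t\omega$ directly. The main computational step is handling the advection terms: I must compute $\partial_r(u\cdot\nabla u_z) - \partial_z(u\cdot\nabla u_r)$. Writing out $u\cdot\nabla u_z = u_r\partial_r u_z + u_z\partial_z u_z$ and similarly for $u_r$, and differentiating, the commutator $[\partial_r, u\cdot\nabla]$ and $[\partial_z, u\cdot\nabla]$ will produce $u\cdot\nabla\omega$ plus extra terms involving derivatives of $u_r$ and $u_z$ hitting $\partial_r u_z$ and $\partial_z u_r$.

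The extra terms are where I would be most careful, and this is the main obstacle. After collecting them, the residual should organize into $(\partial_r u_r + \partial_z u_z)\,\omega$ plus cross terms, and here I would invoke the divergence-free condition in the form given by Proposition \ref{DivProp}: $\partial_r u_r + \partial_z u_z = -k\,\tfrac{u_r}{r}$. Substituting this identity is precisely what converts the symmetric combination of derivatives into the characteristic stretching term $-k\,\tfrac{u_r}{r}\,\omega$. The factor $k = d-2$ enters only through this use of the higher-dimensional divergence formula, which is the sole place the dimension appears; in three dimensions one recovers the classical $-\tfrac{u_r}{r}\omega$. I would verify that all remaining cross terms cancel identically, leaving exactly
\begin{equation}
    \partial_t\omega + (u\cdot\nabla)\omega - k\frac{u_r}{r}\omega = 0,
\end{equation}
which completes the proof. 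The delicate bookkeeping of the advection-derivative commutators, together with the correct application of the divergence constraint, is the crux; everything else is routine differentiation justified by the Sobolev regularity $s > 2 + \tfrac{d}{2}$.
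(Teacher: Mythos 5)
Your proposal is correct and follows essentially the same route as the paper's proof: split the momentum equation into its $r$ and $z$ components via the axisymmetric structure, cross-differentiate so the pressure cancels, and use the divergence constraint $\partial_r u_r + \partial_z u_z = -k\frac{u_r}{r}$ to convert the residual advection terms into the stretching term. The only detail left implicit in your sketch is that the four leftover quadratic terms factor exactly as $(\partial_r u_r + \partial_z u_z)(\partial_r u_z - \partial_z u_r)$, with no additional cross terms to cancel, which is precisely the computation in the paper.
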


\begin{proof}
We begin by observing that due to Sobolev embedding, we have $u(\cdot,t)\in C^2$ and consequently $\omega(\cdot,t)\in C^1$,
for all $0\leq t<T_{max}$.
Next we write the axisymmetric Euler equation in terms of the components $u_r$ and $u_z$, yielding
\begin{align}
    \partial_t u_r
    +(u\cdot\nabla)u_r
    +\partial_r p
    &=0 \\
    \partial_t u_z
    +(u\cdot\nabla)u_z
    +\partial_z p
    &=0.
    \end{align}
Recalling that 
$\omega=\partial_r u_z-\partial_z u_r$, 
we differentiate the equation for $u_r$ with respect to $z$ and the equation for $u_z$ with respect to $r$,
yielding
\begin{align} \label{VortDeriveA}
    \partial_t \partial_z u_r
    +(u\cdot\nabla)\partial_z u_r
    +(\partial_z u\cdot\nabla) u_r
    +\partial_r \partial_z p
    &=0 \\
    \label{VortDeriveB}
    \partial_t \partial_r u_z 
    +(u\cdot\nabla)\partial_r u_z
    +(\partial_r u\cdot\nabla)u_z
    +\partial_r \partial_z p
    &=0.
    \end{align}
Subtracting \eqref{VortDeriveA} from \eqref{VortDeriveB}, we find that
\begin{equation}
    \partial_t\omega +(u\cdot\nabla)\omega
    -(\partial_z u\cdot\nabla) u_r
    +(\partial_r u\cdot\nabla)u_z=0.
\end{equation}

It remains only to compute the difference term. Plugging in we find that
\begin{align}
    -(\partial_z u\cdot\nabla) u_r
    +(\partial_r u\cdot\nabla)u_z
    &=
    -\partial_z u_r \partial_r u_r
    -\partial_zu_z \partial_z u_r
    +\partial_r u_r \partial_r u_z
    +\partial_r u_z \partial_z u_z \\
    &= \label{CoordVortID}
    (\partial_ru_r+\partial_zu_z)
    (\partial_ru_z-\partial_zu_r) \\
    &=
    -k\frac{u_r}{r}\omega,
\end{align}
where we have used the divergence free constraint
\begin{equation}
    \partial_ru_r+\partial_zu_z
    +k\frac{u_r}{r}=0.
\end{equation}
This completes the proof.
\end{proof}

\begin{remark}
We will show in the next section that for axisymmetric, swirl-free vector fields, the velocity $u$ is uniquely determined by the the scalar vorticity $\omega$,
so the vorticity equation completely determines the dynamics of the axisymmetric, swirl-free Euler equation in four and higher dimensions, just as it does in three dimensions.
\end{remark}

One of the most important features of the axisymmetric Euler equation in three dimensions is that the quantity $\frac{\omega}{r}$ is advected by the flow, and therefore remains bounded in a number of important spaces. 
There is an analogous result in four and higher dimensions: the quantity $\frac{\omega}{r^k}$ is advected by the flow.

\begin{proposition} \label{AdvectedProp}
Suppose $u\in C\left([0,T_{max});
H^s_* \left(\mathbb{R}^d\right)\right)
\cap C^1\left([0,T_{max}),H^{s-1}_*
\left(\mathbb{R}^d\right)\right), 
d\geq 3, s>2+\frac{d}{2}$, is a solution of the Euler equation.
Then for all 
$0\leq t<T_{max}$,
\begin{equation}
    \left(\partial_t+u\cdot\nabla\right)
    \frac{\omega}{r^k}=0.
\end{equation}
If we let $\xi=\frac{\omega}{r^k}$,
then this can be expressed as
\begin{equation} \label{AdvectedEquation}
    \left(\partial_t+u\cdot\nabla\right)
    \xi =0.
\end{equation}
\end{proposition}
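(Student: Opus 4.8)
The plan is to start from the vorticity equation established in Proposition~\ref{VortEqProp},
\begin{equation}
    \partial_t\omega+(u\cdot\nabla)\omega-k\frac{u_r}{r}\omega=0,
\end{equation}
and simply verify by the product (quotient) rule that dividing by $r^k$ converts the lower-order reaction term $-k\frac{u_r}{r}\omega$ into exactly the amount needed to make the material derivative of $\xi=\frac{\omega}{r^k}$ vanish. The only structural input beyond Proposition~\ref{VortEqProp} is the behavior of $r^k$ under the transport operator $\partial_t+u\cdot\nabla$; since $r$ is time-independent, I need to understand $(u\cdot\nabla)r^k$.

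First I would compute $(u\cdot\nabla)r^k$. Since $u=u_re_r+u_ze_z$ and $\nabla r=e_r$ (established in the proof of Proposition~\ref{UgradProp}), while $r$ does not depend on $z$, we get $\nabla r^k=kr^{k-1}e_r$ and hence
\begin{equation}
    (u\cdot\nabla)r^k=u_r\,kr^{k-1}=k\frac{u_r}{r}r^k.
\end{equation}
Because $r$ has no explicit time dependence, $\partial_t r^k=0$, so in fact $(\partial_t+u\cdot\nabla)r^k=k\frac{u_r}{r}r^k$. This is the key identity: the relative material rate of change of $r^k$ is precisely $k\frac{u_r}{r}$.

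Next I would apply the transport operator to the quotient $\xi=\omega\cdot r^{-k}$ using the Leibniz rule for $\partial_t+u\cdot\nabla$, which acts as a first-order derivation:
\begin{equation}
    (\partial_t+u\cdot\nabla)\frac{\omega}{r^k}
    =\frac{(\partial_t+u\cdot\nabla)\omega}{r^k}
    -\frac{\omega}{r^{2k}}(\partial_t+u\cdot\nabla)r^k.
\end{equation}
Substituting the vorticity equation $(\partial_t+u\cdot\nabla)\omega=k\frac{u_r}{r}\omega$ into the first term and the identity $(\partial_t+u\cdot\nabla)r^k=k\frac{u_r}{r}r^k$ into the second, both contributions equal $k\frac{u_r}{r}\frac{\omega}{r^k}$ and cancel, yielding $(\partial_t+u\cdot\nabla)\xi=0$. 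Rewriting in terms of $\xi$ gives \eqref{AdvectedEquation}.

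The computation is entirely routine, so there is no genuine obstacle; the only point requiring mild care is the regularity needed to justify the pointwise product rule. Under the hypothesis $s>2+\frac{d}{2}$, Sobolev embedding gives $u(\cdot,t)\in C^2$ and $\omega(\cdot,t)\in C^1$ (as already noted in the proof of Proposition~\ref{VortEqProp}), so all derivatives appearing above exist classically away from the axis $r=0$, where $\xi$ is defined; this suffices for the identity to hold pointwise wherever $r>0$.
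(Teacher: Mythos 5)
Your proposal is correct and follows essentially the same route as the paper: the paper likewise applies the product rule to $\frac{\omega}{r^k}$, observes that the transport term produces exactly $-k u_r\frac{\omega}{r^{k+1}}$, and factors out $\frac{1}{r^k}$ so that the vorticity equation from Proposition~\ref{VortEqProp} kills everything. Your explicit computation of $(u\cdot\nabla)r^k$ and the remark on pointwise regularity away from the axis are just slightly more detailed presentations of the same one-line cancellation.
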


\begin{proof}
Applying the product rule and plugging into the vorticity equation we can see that
\begin{align}
    \left(\partial_t+u\cdot\nabla\right)
    \frac{\omega}{r^k}
    &=
    \frac{1}{r^k}\partial_t\omega
    +\frac{1}{r^k}(u\cdot\nabla)\omega
    -u_r k\frac{\omega}{r^{k+1}} \\
    &=
    \frac{1}{r^k}\left(\partial_t\omega
    +(u\cdot\nabla)\omega
    -k\frac{u_r}{r}\omega \right) \\
    &=0.
\end{align}
This completes the proof.
\end{proof}

\begin{proposition} \label{LagrangeMap}
Suppose $u\in C\left([0,T_{max});
H^s_* \left(\mathbb{R}^d\right)\right)
\cap C^1\left([0,T_{max}),H^{s-1}_*
\left(\mathbb{R}^d\right)\right), 
d\geq 3, s>2+\frac{d}{2}$, is a solution of the Euler equation,
and let $X(r,z,t)$ be the associated Lagrangian flow map satisfying
\begin{align}
    \frac{\diff X}{\diff t}(r,z,t)&=u(X(r,z,t),t) \\
    X(r,z,0)&=re_r+ze_z.
\end{align}
Then for all $0\leq t<T_{max}, r\in\mathbb{R}^+, z\in\mathbb{R}$,
\begin{equation}
    \frac{\omega(X(r,z,t),t)}{X_r(r,z,t)^k}
    =\frac{\omega^0(r,z)}{r^k}
\end{equation}
\end{proposition}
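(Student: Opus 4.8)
The plan is to deduce this conservation law directly from the transport equation already established in Proposition \ref{AdvectedProp}, together with the standard fact that a quantity satisfying a pure transport equation is constant along Lagrangian trajectories. First I would record that the flow map is well defined: since $s>2+\frac{d}{2}$, Sobolev embedding gives $u(\cdot,t)\in C^2\left(\mathbb{R}^d\right)$ for each $0\leq t<T_{max}$, so the velocity field is Lipschitz in space and continuous in time, and the ODE
\begin{equation}
    \frac{\diff X}{\diff t}(r,z,t)=u(X(r,z,t),t),
    \qquad
    X(r,z,0)=re_r+ze_z,
\end{equation}
has a unique $C^1$-in-time solution by Picard--Lindel\"of. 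Because $u$ is swirl-free, the angular coordinate of each particle is preserved, so $X$ stays in the plane $\spn(e_r,e_z)$ and may be written $X(r,z,t)=X_r(r,z,t)e_r+X_z(r,z,t)e_z$, where $X_r(r,z,t)=|X'(r,z,t)|$ is precisely the radial coordinate of the transported point.

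Next I would set $\xi=\frac{\omega}{r^k}$ and recall from Proposition \ref{AdvectedProp} that
\begin{equation}
    \left(\partial_t+u\cdot\nabla\right)\xi=0.
\end{equation}
Evaluating $\xi$ along a trajectory and differentiating in time via the chain rule, using $\frac{\diff X}{\diff t}=u(X,t)$, gives
\begin{equation}
    \frac{\diff}{\diff t}\Big[\xi(X(r,z,t),t)\Big]
    =\big(\partial_t\xi\big)(X,t)
    +u(X,t)\cdot\big(\nabla\xi\big)(X,t)
    =\big[(\partial_t+u\cdot\nabla)\xi\big](X(r,z,t),t)
    =0.
\end{equation}
Hence $t\mapsto \xi(X(r,z,t),t)$ is constant, and evaluating at $t=0$ using $X(r,z,0)=re_r+ze_z$ yields $\xi(X(r,z,t),t)=\xi^0(r,z)=\frac{\omega^0(r,z)}{r^k}$ for all $0\leq t<T_{max}$.

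Finally I would translate this back into the stated form. Since $X_r(r,z,t)$ is the radial coordinate of $X(r,z,t)$, the Eulerian function $\xi(x,t)=\frac{\omega(x,t)}{|x'|^k}$ evaluated at $x=X(r,z,t)$ is exactly $\frac{\omega(X(r,z,t),t)}{X_r(r,z,t)^k}$, and combining with the previous display gives the claimed identity. I expect the only genuine subtleties—rather than an actual obstacle—to be bookkeeping ones: confirming the interpretation of $X_r$ as the radial coordinate (so that the swirl-free structure is used correctly), justifying the chain rule and the pointwise transport identity in the given regularity class, and checking that the quotient is meaningful, i.e.\ that for $r\in\mathbb{R}^+$ the trajectory keeps $X_r(r,z,t)>0$ so that $\xi^0(r,z)$ and $\xi(X,t)$ remain finite. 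Each of these follows from the $C^2$ regularity of $u$ and the uniqueness of the flow, but they are the points that must be addressed to make the formal transport argument rigorous.
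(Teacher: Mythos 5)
Your proposal is correct and follows essentially the same route as the paper: both evaluate the advected quantity $\xi=\frac{\omega}{r^k}$ from Proposition \ref{AdvectedProp} along the Lagrangian trajectory, apply the chain rule with $\frac{\diff X}{\diff t}=u(X,t)$ to conclude constancy, and then read off the identity at $t=0$. The additional bookkeeping you flag (well-posedness of the flow map, $X_r$ being the radial coordinate of the transported point, and $X_r>0$ along trajectories) is sound and, if anything, slightly more careful than the paper's own write-up.
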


\begin{proof}
We will begin by letting
\begin{equation}
    \xi(r,z,t)=\frac{\omega(r,z,t)}{r^k}.
\end{equation}
We have already shown that $\xi$ is advected by the flow and so
\begin{equation}
    (\partial_t +u\cdot\nabla)\xi=0.
\end{equation}
Define $\Tilde{\xi}$ by
\begin{equation}
    \Tilde{\xi}(r,z,t)=\xi(X(r,z,t),t).
\end{equation}
Applying the chain rule, we can see that for all $0\leq t<T_{max}, r\in\mathbb{R}^+, z\in\mathbb{R}$,
\begin{align}
    \partial_t\Tilde{\xi}(r,z,t)
    &=
    (\partial_t\xi +\frac{\diff X}{\diff t}\cdot \nabla \xi)(X(r,z,t),t) \\
    &=
    (\partial_t\xi+u\cdot\nabla\xi)
    (X(r,z,t),t) \\
    &=0.
\end{align}
Therefore, we can conclude that 
for all $0\leq t<T_{max}, r\in\mathbb{R}^+, z\in\mathbb{R}$,
\begin{align}
    \xi(X(r,z,t),t)&=\xi(X(r,z,0),0) \\
    &=\xi^0(r,z),
\end{align}
and that consequently
\begin{equation}
    \frac{\omega(X(r,z,t),t)}{X_r(r,z,t)^k}
    =\frac{\omega^0(r,z)}{r^k}.
\end{equation}
This completes the proof.
\end{proof}

\begin{corollary}
Suppose $u\in C\left([0,T_{max});
H^s_* \left(\mathbb{R}^d\right)\right)
\cap C^1\left([0,T_{max}),H^{s-1}_*
\left(\mathbb{R}^d\right)\right), 
d\geq 3, s>2+\frac{d}{2}$, is a solution of the Euler equation, and that $\frac{\omega^0}{r^k}\in L^\infty$. Then for all $r\in\mathbb{R}^+,
z\in\mathbb{R},
0\leq t<T_{max}$,
\begin{equation}
    |\omega(r,z,t)| 
    \leq
    \left\|\frac{\omega^0}{r^k}
    \right\|_{L^\infty} r^k.
\end{equation}
\end{corollary}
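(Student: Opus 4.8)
The plan is to read the bound straight off the advection identity already in hand. Proposition \ref{AdvectedProp} shows that $\frac{\omega}{r^k}$ solves the pure transport equation $\left(\partial_t+u\cdot\nabla\right)\frac{\omega}{r^k}=0$, which has no source term, so this quantity is constant along every Lagrangian trajectory. First I would set $\xi=\frac{\omega}{r^k}$ and invoke the Lagrangian form of this fact from the preceding proposition, namely
\[
\frac{\omega(X(r,z,t),t)}{X_r(r,z,t)^k}=\frac{\omega^0(r,z)}{r^k},
\]
where $X$ is the flow map and $X_r$ its radial component. The right-hand side is bounded by $\left\|\frac{\omega^0}{r^k}\right\|_{L^\infty}$ uniformly in $(r,z)$, which is the entire content of the estimate.

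Next I would fix an arbitrary Eulerian point $(r',z')$ at time $t$ and use that $X(\cdot,\cdot,t)$ is a diffeomorphism of $\mathbb{R}^d$ — this follows from the Sobolev embedding $H^s\hookrightarrow C^2$ for $s>2+\frac{d}{2}$, so that $u(\cdot,t)$ is Lipschitz with bounded gradient on $[0,t]$ and the flow is invertible — to produce a preimage $(r,z)$ with $X(r,z,t)=(r',z')$, hence $X_r(r,z,t)=r'$. Substituting into the Lagrangian identity and taking absolute values gives
\[
\left|\frac{\omega(r',z',t)}{(r')^k}\right|
=\left|\frac{\omega^0(r,z)}{r^k}\right|
\leq \left\|\frac{\omega^0}{r^k}\right\|_{L^\infty}.
\]
Multiplying through by $(r')^k$ and relabelling $(r',z')$ as $(r,z)$ yields the claimed inequality $|\omega(r,z,t)|\leq \left\|\frac{\omega^0}{r^k}\right\|_{L^\infty}r^k$.

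I do not anticipate any genuine difficulty: the statement is captured entirely by the advection of $\frac{\omega}{r^k}$, and the only point needing care is the surjectivity of the flow map, so that every Eulerian point really does have a Lagrangian preimage with the prescribed radial coordinate. This is standard under the stated regularity. Alternatively, one can bypass the flow map and argue directly by the maximum principle for the transport equation in Proposition \ref{AdvectedProp}: since $\frac{\omega}{r^k}$ is constant along trajectories and the flow is a bijection of $\mathbb{R}^d$, the range of $\frac{\omega(\cdot,t)}{r^k}$ coincides with that of $\frac{\omega^0}{r^k}$, so $\left\|\frac{\omega(\cdot,t)}{r^k}\right\|_{L^\infty}\leq \left\|\frac{\omega^0}{r^k}\right\|_{L^\infty}$, and multiplying by $r^k$ pointwise finishes the proof.
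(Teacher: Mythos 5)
Your proposal is correct and follows essentially the same route as the paper: the corollary is read off from the advection of $\xi=\frac{\omega}{r^k}$ (Proposition \ref{AdvectedProp}), which preserves the $L^\infty$ norm, and then one multiplies by $r^k$ pointwise --- indeed, the ``alternative'' argument you sketch at the end is precisely the paper's own proof. Your main version merely makes explicit the flow-map surjectivity that the paper's terse statement $\|\xi(\cdot,t)\|_{L^\infty}=\|\xi^0\|_{L^\infty}$ leaves implicit, which is a fair bit of added rigor but not a different method.
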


\begin{proof}
We can see immediately from Proposition \ref{AdvectedProp} that for all $0\leq t<T_{max}$,
\begin{equation}
    \|\xi(\cdot,t)\|_{L^\infty}
    =
    \left\|\xi^0\right\|_{L^\infty},
\end{equation}
and therefore for all 
$r\in\mathbb{R}^+,
z\in\mathbb{R},
0\leq t<T_{max}$,
\begin{equation}
    \frac{\omega(r,z,t)}{r^k}
    \leq 
    \left\|\xi^0\right\|_{L^\infty}.
\end{equation}
This completes the proof.
\end{proof}

\begin{corollary} \label{LorentzNorm}
Suppose $u\in C\left([0,T_{max});
H^s_* \left(\mathbb{R}^d\right)\right)
\cap C^1\left([0,T_{max}),H^{s-1}_*
\left(\mathbb{R}^d\right)\right), 
d\geq 3, s>2+\frac{d}{2}$, is a solution of the Euler equation, then the Lorentz quasinorms of $\frac{\omega}{r^k}$ are all preserved by the flow:
for all $1\leq p,q\leq +\infty$ and for all $0\leq t<T_{max}$,
\begin{equation}
    \left\|\frac{\omega}{r^k}(\cdot,t)
    \right\|_{L^{p,q}
    \left(\mathbb{R}^d\right)}
    =
    \left\|\frac{\omega^0}{r^k}
    \right\|_{L^{p,q}
    \left(\mathbb{R}^d\right)}
\end{equation}
\end{corollary} 
\begin{proof}
First we observe that by Proposition \ref{LagrangeMap}, for all $0\leq t<T_{max}$,
\begin{equation}
    \left\|\xi \circ X (\cdot,t)
    \right\|_{L^{p,q}
    \left(\mathbb{R}^d\right)}
    =
    \left\|\xi^0 \right\|_{L^{p,q}
    \left(\mathbb{R}^d\right)}.
\end{equation}
Because the transport velocity is divergence free the flow map $X(\cdot,t)$ is volume preserving and therefore,
\begin{equation}
    \left\|\xi \circ X (\cdot,t)
    \right\|_{L^{p,q}
    \left(\mathbb{R}^d\right)}
    =
    \left\|\xi (\cdot,t)
    \right\|_{L^{p,q}
    \left(\mathbb{R}^d\right)},
\end{equation}
and this completes the proof.
\end{proof}

\begin{remark}
We will note that there is a fundamental difference between the case $d=3$ and the case $d\geq 4$.
When we take $d=3$, must have $\frac{\omega^0}{r}\in L^\infty$ for sufficiently smooth initial data, while in $d\geq 4$, there are Schwartz class initial data where $\frac{\omega^0}{r^k}\notin L^\infty$.
We will show this now.
\end{remark}

\begin{proposition}
Suppose $u\in H^s_*
\left(\mathbb{R}^d\right),
s>2+\frac{d}{2}$.
Then $\frac{\omega}{r}\in L^\infty$
and
\begin{equation}
    \left\|\frac{\omega}{r}
    \right\|_{L^\infty}
    \leq
    C_{s,d}\|u\|_{H^s}.
\end{equation}
\end{proposition}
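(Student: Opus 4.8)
The plan is to exploit the pointwise formula $A=\frac{1}{2}\omega\,(e_r\otimes e_z-e_z\otimes e_r)$ established above, together with the observation that $A$, being the antisymmetric part of $\nabla u$, inherits one degree of differentiability from $u$. First I would invoke Sobolev embedding: since $s>2+\frac{d}{2}$ we have $u\in C^2(\mathbb{R}^d)$ with $\|u\|_{C^2}\leq C_{s,d}\|u\|_{H^s}$. Consequently the single Cartesian component
\[
    A_{1d}=\tfrac{1}{2}\left(\partial_1 u_d-\partial_d u_1\right)
\]
is a genuine scalar $C^1$ function on all of $\mathbb{R}^d$, with $\|A_{1d}\|_{C^1}\leq C_{s,d}\|u\|_{H^s}$. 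The point is that although $\omega$ is defined intrinsically in cylindrical variables, the Cartesian entries of $A$ are smooth functions of $x$, including at the axis.

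Next I would read off this component from the tensor formula. Writing $e_r=(x'/|x'|,0)$ and $e_z=e_d$, the $(1,d)$ entry of $e_r\otimes e_z$ is $x_1/r$ while that of $e_z\otimes e_r$ is zero, so away from the axis $A_{1d}(x)=\frac{1}{2}\omega(r,z)\frac{x_1}{r}$. Evaluating along the ray $x=(r,0,\dots,0,z)$ with $r>0$ gives $A_{1d}=\frac{1}{2}\omega(r,z)$, whereas evaluating along the orthogonal ray $x=(0,x_2,0,\dots,0,z)$ gives $A_{1d}\equiv 0$, since there $x_1=0$ (this is where $d\geq 3$ is used: it guarantees a second horizontal coordinate). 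By continuity of $A_{1d}$ these two limits must agree at the axis point $(0,\dots,0,z)$, which forces $A_{1d}(0,\dots,0,z)=0$; this is exactly the statement that the vorticity vanishes on the axis.

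Finally, because $A_{1d}$ is $C^1$ and vanishes on the axis, the fundamental theorem of calculus along the ray yields
\[
    \tfrac{1}{2}\omega(r,z)
    =A_{1d}(r,0,\dots,0,z)-A_{1d}(0,\dots,0,z)
    =\int_0^r \partial_1 A_{1d}(t,0,\dots,0,z)\diff t,
\]
so that $|\omega(r,z)|\leq 2r\,\|\partial_1 A_{1d}\|_{L^\infty}\leq 2r\,C_{s,d}\|u\|_{H^s}$, uniformly in $z$. Dividing by $r$ gives the desired estimate $\|\frac{\omega}{r}\|_{L^\infty}\leq C_{s,d}\|u\|_{H^s}$.

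The only delicate point, and the conceptual heart of the statement, is the behavior at the axis $r=0$, where $e_r$ is undefined and the factor $x_1/r$ is bounded but discontinuous. The key is that continuity of the honestly smooth Cartesian field $A_{1d}$ already forces $\omega$ to vanish on the axis, and the extra derivative supplied by $u\in C^2$ then upgrades this to a quantitative linear rate. I expect the main care to be purely in the bookkeeping: one must ensure the bound on $\frac{\omega}{r}$ is uniform over all $(r,z)$ rather than merely recovering the limiting value of $\partial_1 A_{1d}$ at the axis, and it is precisely the integral (rather than differential) form of the argument that delivers this global control at once, with no need for a separate estimate in the region bounded away from the axis.
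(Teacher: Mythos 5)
Your proof is correct, and it takes a genuinely different route from the paper's. The paper never integrates from the axis: it computes the full tensor $\nabla A$ by differentiating $A=\frac{1}{2}\omega\,(e_r\otimes e_z-e_z\otimes e_r)$ and using $\nabla e_r=\frac{1}{r}\Tilde{I}_d$, which splits $\nabla A$ into two mutually orthogonal pieces, one carrying $\nabla\omega$ and one carrying $\frac{\omega}{r}$, giving the pointwise identity $|\nabla A|^2=\frac{1}{2}|\nabla\omega|^2+\frac{d-2}{2}\left|\frac{\omega}{r}\right|^2$. Thus in the paper $\frac{\omega}{r}$ is literally (up to a constant) a component of $\nabla A$, and the Sobolev bound $\|\nabla A\|_{L^\infty}\leq C_{s,d}\|u\|_{H^s}$ concludes immediately with the constant $\sqrt{2/(d-2)}$. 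You instead work with the single Cartesian entry $A_{1d}$, prove that it vanishes on the axis by continuity (using a second horizontal direction, which is where $d\geq 3$ enters for you, versus $d-2>0$ in the paper's constant), and then integrate $\partial_1 A_{1d}$ along a radial segment to convert the uniform bound on one derivative into the linear vanishing rate. Both arguments rest on the same analytic input---Sobolev embedding controls first derivatives of $A$ in $L^\infty$---but they extract $\frac{\omega}{r}$ differently: the paper by a pointwise algebraic identity that requires computing $\nabla e_r$ and checking orthogonality of the two pieces, you by a fundamental-theorem-of-calculus argument that needs only one matrix entry and the vanishing of $\omega$ at the axis. Your version is more elementary and makes explicit the geometric content, stated only informally in the paper's introduction, that the vorticity of a sufficiently smooth axisymmetric swirl-free field must vanish linearly at the axis; the paper's version buys a slightly sharper, dimension-improving constant (decaying like $(d-2)^{-1/2}$ as $d\to\infty$, versus your dimension-independent prefactor $2$) and dispenses with any path-integration step.
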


\begin{proof}
We will begin by observing that if 
$u\in H^s\left(\mathbb{R}^d\right)$, then clearly
$\nabla A \in H^{s-2}\left(\mathbb{R}^d\right),
s-2> \frac{d}{2}$, so by Sobolev embedding
\begin{align}
    \|\nabla A\|_{L^\infty}
    &\leq 
    C \|\nabla A\|_{H^{s-2}} \\
    &\leq 
    C \|u\|_{H^s}.
\end{align}
Now it remains only to control magnitude of $\frac{\omega}{r}$ by the magnitude of $\nabla A$.
By definition $\nabla A$ is a three tensor with
\begin{equation}
    (\nabla A)_{ijk}=
    \partial_i A_{jk}.
\end{equation}
Recall that 
\begin{equation}
    A(x)=\frac{1}{2}\omega(r,z)
    (e_r\otimes e_z-e_z\otimes e_r),
\end{equation}
and we can see that
\begin{align}
    \nabla A
    &=
    \frac{1}{2}\nabla\omega \otimes 
    (e_r\otimes e_z-e_z\otimes e_r)
    +
    \frac{1}{2}\omega
    (\nabla e_r\otimes e_z
    -\left( \nabla e_r\otimes e_z\right)^*) \\
    &= \label{GradAComp}
    \frac{1}{2}\nabla\omega \otimes 
    (e_r\otimes e_z-e_z\otimes e_r)
    + 
    \frac{1}{2}\frac{\omega}{r}
    \left(\Tilde{I}_d \otimes e_z
    -\left(\Tilde{I}_d \otimes e_z\right)^*\right),
\end{align}
where $M^*$ for a three tensor is transpose of the last two elements
\begin{equation}
    M^*_{ijk}=M_{ikj}.
\end{equation}
Recalling that $\nabla \omega=\partial_r\omega e_r+\partial_z\omega e_z$, and observing that the two terms in \eqref{GradAComp} are orthogonal, we can see that
\begin{equation}
    |\nabla A|^2=
    \frac{1}{2}|\nabla \omega|^2
    +\frac{d-2}{2}\left|
    \frac{\omega}{r}\right|^2.
\end{equation}
This clearly implies that
\begin{equation}
    \left\|\frac{\omega}{r}
    \right\|_{L^\infty}
    \leq 
    \sqrt{\frac{2}{d-2}}
    \|\nabla A\|_{L^\infty},
\end{equation}
and this completes the proof.
\end{proof}

\begin{proposition}
For all $d\geq 4$, there exists a Schwartz class divergence free vector field $u\in \mathcal{S}\left(\mathbb{R}^d\right)_{df}$ such that $\frac{\omega}{r^k}\notin L^\infty$.
In particular,
\begin{equation}
    u(x)=r(1-2z^2)\exp(-r^2-z^2) e_r
    -z(k+1-2r^2)\exp(-r^2-z^2) e_z,  
\end{equation}
satisfies these conditions.
\end{proposition}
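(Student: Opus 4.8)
The plan is to verify directly that the proposed vector field has the three required properties: it is Schwartz class and axisymmetric swirl-free, it is divergence free, and its scalar vorticity $\omega$ vanishes only to first order at the axis, so that dividing by $r^k$ with $k\geq 2$ produces an unbounded function. Throughout I would abbreviate $E=\exp(-r^2-z^2)$ and record the radial and axial components
\[
    u_r=r(1-2z^2)E,
    \qquad
    u_z=-z(k+1-2r^2)E .
\]

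First I would confirm that $u$ is genuinely a Schwartz vector field on $\mathbb{R}^d$, which is the only point requiring real care, since $e_r$ itself is singular at the axis. The Cartesian components are $u_i=\tfrac{x_i}{r}u_r$ for $1\leq i\leq d-1$ and $u_d=u_z$. Because $\tfrac{u_r}{r}=(1-2z^2)e^{-|x'|^2-z^2}$ and $u_z=-z(k+1-2|x'|^2)e^{-|x'|^2-z^2}$ depend on $x'$ only through $|x'|^2=r^2$, each $u_i$ is a polynomial in $(x',z)$ times a Gaussian, hence smooth across $r=0$ and rapidly decaying. Thus $u\in\mathcal{S}(\mathbb{R}^d)$, and it is axisymmetric and swirl-free by construction.

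Next I would check the divergence-free condition using Proposition \ref{DivProp}, namely $\nabla\cdot u=\partial_r u_r+k\tfrac{u_r}{r}+\partial_z u_z$. A short computation gives $\partial_r u_r=(1-2z^2)(1-2r^2)E$, $k\tfrac{u_r}{r}=k(1-2z^2)E$, and $\partial_z u_z=-(k+1-2r^2)(1-2z^2)E$; summing, the bracketed factor collapses to $(1-2r^2)+k-(k+1-2r^2)=0$, so $\nabla\cdot u=0$. I would remark that the polynomial coefficients $(1-2z^2)$ and $(k+1-2r^2)$ are engineered precisely to force this cancellation while keeping $u_r$ proportional to $r$.

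Finally I would compute the vorticity $\omega=\partial_r u_z-\partial_z u_r$. Using $\partial_r u_z=2rz(k+3-2r^2)E$ and $\partial_z u_r=-2rz(3-2z^2)E$, I obtain
\[
    \omega=2rz\,(k+6-2r^2-2z^2)\,E,
    \qquad\text{so}\qquad
    \frac{\omega}{r^k}=2z\,(k+6-2r^2-2z^2)\,E\,r^{1-k}.
\]
Fixing any $z_0\neq 0$ small enough that $k+6-2z_0^2>0$, the coefficient of $r^{1-k}$ is nonzero at $r=0$, and since $d\geq 4$ gives $k=d-2\geq 2$ and hence $1-k\leq -1$, letting $r\to 0^+$ forces $\tfrac{\omega}{r^k}(r,z_0)\to\pm\infty$. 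Therefore $\tfrac{\omega}{r^k}\notin L^\infty$, which completes the argument. The main—and essentially only—obstacle is the smoothness verification in the second step, where one must see past the coordinate singularity of $e_r$; the remaining steps are routine differentiation of a Gaussian times a low-degree polynomial.
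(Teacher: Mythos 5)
Your proof is correct and follows essentially the same route as the paper's: verify Schwartz regularity by passing to Cartesian components (polynomial times Gaussian), check $\nabla\cdot u=0$ via the cylindrical divergence formula $\partial_r u_r+k\tfrac{u_r}{r}+\partial_z u_z$, compute $\omega=2rz(k+6-2r^2-2z^2)e^{-r^2-z^2}$, and conclude that $\tfrac{\omega}{r^k}\sim r^{1-k}$ is unbounded near the axis since $k\geq 2$. Your treatment of smoothness across $r=0$ and the explicit limiting argument at fixed $z_0\neq 0$ are slightly more detailed than the paper's, but the substance is identical.
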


\begin{proof}
We will begin by showing that $u$ is Schwartz class and divergence free.
Using that $x'=r e_r$, we can see that
\begin{equation}
    u(x)=\left( x'(1-2x_d^2)
    -x_d(k+1-2|x'|^2)e_d\right)
    \exp\left(-|x|^2\right),
\end{equation}
and so we can see that 
$u\in \mathcal{S}\left(\mathbb{R}^d\right)$.
Next we compute that
\begin{align}
    \nabla\cdot u
    &=
    \partial_r u_r+\frac{k}{r}u_r
    +\partial_zu_z \\
    &=
    \left((k+1-2r^2)(1-2z^2)
    +(-1+2z^2)(k+1-2r^2)
    \right)\exp(-r^2-z^2) \\
    &=
    0.
\end{align}
Now that we have shown that $u\in \mathcal{S}_{df}$, it remains only to show that $\frac{\omega}{r^k}$ is unbounded.
Computing the vorticity we find that
\begin{align}
    \omega&=
    \partial_r u_z-\partial_z u_r \\
    &=
    \left(4rz+2(k+1)rz -4r^3z
    +4rz+2rz-4rz^3\right)
    \exp(-r^2-z^2) \\
    &=
    rz\left(2k+12-4r^2-4z^2\right)
    \exp(-r^2-z^2),
\end{align}
and so we can conclude that
\begin{equation}
    \frac{\omega}{r^k}
    =
    \frac{z}{r^{d-3}}
    \left(2k+12-4r^2-4z^2\right)
    \exp(-r^2-z^2).
\end{equation}
By hypothesis, $d\geq 4$, and so
$\frac{\omega}{r^k} \notin L^\infty$,
and this completes the proof.
\end{proof}

We should note that the vorticity equation can also be expressed in ``divergence form'', by making use of the coordinate divergence
$\widetilde{\nabla}\cdot v=
    \partial_r v_r+\partial_z v_z$.

\begin{proposition}
Suppose $u\in C\left([0,T_{max});
H^s_* \left(\mathbb{R}^d\right)\right)
\cap C^1\left([0,T_{max}),H^{s-1}_*
\left(\mathbb{R}^d\right)\right), 
d\geq 3, s>2+\frac{d}{2}$, is a solution of the Euler equation.
Then for all $0\leq t<T_{max}$,
\begin{equation}
    \partial_t\omega 
    +\widetilde{\nabla}\cdot (\omega u)=0,
\end{equation}
that is
\begin{equation}
    \partial_t\omega
    +\partial_r (\omega u_r)
    +\partial_z (\omega u_z)
    =0.
\end{equation}
\end{proposition}

\begin{proof}
We begin by applying the product rule, and observing that
\begin{equation}
    \widetilde{\nabla}\cdot (\omega u)
    =
    (u\cdot\nabla)\omega
    +(\Tilde{\nabla}\cdot u)\omega.
\end{equation}
Recalling the identity \eqref{CoordVortID} from Proposition \ref{VortEqProp},
we already shown that 
\begin{equation}
    \partial_t\omega
    +(u\cdot\nabla)\omega
    +(\Tilde{\nabla}\cdot u)\omega
    =0,
\end{equation}
and this completes the proof.
\end{proof}

\section{The Biot-Savart law in higher dimensions} \label{BiotSavartSection}

The scalar vorticity uniquely determines the velocity for axisymmetric, swirl-free vector fields. In this section, we will derive integral formulae for recovering the velocity and velocity component functions from the vorticity.

\begin{proposition} \label{BiotSavart}
Suppose $u\in H^s_* \left(\mathbb{R}^d\right), s> \frac{d}{2}$. 
Then $u$ can be recovered from its scalar vorticity as
\begin{equation} \label{BS}
    u(x)=-\alpha_d \int_{\mathbb{R}^d} 
    \omega(\rb,\zb)( e_{\rb} \otimes e_z - e_z \otimes e_{\rb} ) \frac{x-\xb}{|x-\xb|^d} \diff \xb,
\end{equation}
where $\rb = |\xb'|, \zb = \xb_d$, and 
$\alpha_d=\frac{(d-2)\Gamma\left(
\frac{d}{2}-1\right)}{4\pi^\frac{d}{2}}$.
\end{proposition}

\begin{proof}
Recalling that  $A=\frac{1}{2}(\nabla u- (\nabla u)^{tr})$,
we can see that
\[
    -2\divr(A)=-\Delta u+ \nabla (\nabla \cdot u)
    =-\Delta u,
\]
and so $u$ can be obtained from $A$ by convolution with the Poisson kernel:
\[
\begin{split}
    u &= -2\divr (-\Delta)^{-1}A
    = -2\divr  \frac{\alpha_d}{d-2} \int_{\mathbb{R}^d}
    \frac{1}{|x-\xb|^{d-2}}A(\xb) \diff \xb \\
    &= -2\frac{\alpha_d}{d-2}\sum_{i=1}^d
    \int_{\mathbb{R}^d}
    \partial_{x_i}\frac{1}{|x-\xb|^{d-2}}A_{ij}(\xb)
    \diff \xb \\ &=
    -2\frac{\alpha_d}{d-2} \int_{\mathbb{R}^d}
    A^{tr}(\xb) \nabla_x \frac{1}{|x-\xb|^{d-2}} \\
    &=-\alpha_d
    \int_{\mathbb{R}^d}\omega(\rb,\zb)
    (e_{\rb} \otimes e_z - e_z \otimes e_{\rb}) 
    \frac{x-\xb}{|x-\xb|^d} \diff \xb.
\end{split}
\]
\end{proof}

\begin{lemma} \label{SphericalIntegration}
For any $f \in L^1([-1,1];\left(1-y_1^2\right)^{\frac{d-4}{2}} dy_1)$, 
\begin{align} \label{spherical}
    \int_{\S^{d-2}} f(y_1) \diff S(y)
    &=
    m_{d-3} \int_{-1}^1 
    \left(1-y_1^2\right)^{\frac{d-4}{2}} 
    f(y_1) \diff y_1 \\
    &= 
    m_{d-3} \int_0^\pi  f(\cos \th) \sin^{d-3} \th \diff \th,
\end{align}
where for $n \geq 0$,
$m_{n} =\frac{2\pi^{\frac{n+1}{2}}}
{\Gamma\left(\frac{n+1}{2}\right)}$
is the surface area of the $n$-sphere embedded in 
$\mathbb{R}^{n+1}$.
\end{lemma}

Note $m_0=2$, corresponding to the fact that $S^0$ has two points. The second line of \eqref{spherical} follows from
change of variables $y_1 = \cos \th$.

\begin{proof}
Write $\R^{d-1} \ni y=(y_1,\Tilde{y})$.
Observing that for $y\in \S^{d-2} \subset \R^{d-1}$,
\[
    y_1^2+|\Tilde{y}|^2=|y|^2=1,
\]
and also that
\[
    \diff S(y)= \frac{1}{\sqrt{1-y_1^2}}
    \diff y_1 \diff S(\Tilde{y}),
\]
we arrive at
\begin{align}
    \int_{\S^{d-2}}f(y_1) \diff S(y)
    &=
    \int_{-1}^1\int_{|\Tilde{y}|=\sqrt{1-y_1^2}}
    f(y_1) \frac{1}{\sqrt{1-y_1^2}} 
    \diff S(\Tilde{y}) \diff y_1 \\
    &= 
    \int_{-1}^1 f(y_1) \frac{1}{\sqrt{1-y_1^2}}
    \left|\S^{d-3}_{\sqrt{1-y_1^2}}\right|\diff y_1.  
\end{align}
The first equality in~\eqref{spherical} then follows from
\[
    \left|\S^{d-3}_{\sqrt{1-y_1^2}}\right|
    =m_{d-3}\left(\sqrt{1-y_1^2}\right)^{d-3},
\]
and the second follows from the substitution $y_1 = \cos \th$.
\end{proof}

\begin{proposition} 
\label{BiotSavartCoorindate1}
Suppose $u\in H^s_{df}\left(\mathbb{R}^d\right), s> \frac{d}{2}, d\geq 3$, is axisymmetric and swirl-free.
Then the components $u_r$ and $u_z$ can be recovered from the scalar vorticity using the formulas
\begin{multline} \label{radvelo}
    u_r(r,z)=
    -\frac{d-2}{2\pi}
    \int_{-\infty}^\infty \int_0^\infty 
    \rb^{d-2} (z-\zb) \omega(\rb,\zb)  \\
    \int_{-1}^1 \frac{y_1
    (1-y_1^2)^\frac{d-4}{2}}
    {\left(r^2+\rb^2-2r\rb y_1 
    +(z-\zb)^2\right)^\frac{d}{2}} 
    \diff y_1 \diff\rb \diff \zb
\end{multline} 
or
\begin{multline} \label{radvelo2}
    u_r(r,z)=-\frac{d-2}{2\pi}
    \int_{-\infty}^\infty \int_0^\infty 
    \rb^{d-2} (z-\zb) \omega(\rb,\zb) \\
    \int_{0}^\pi  \frac{\sin^{d-3} \th \cos \th}
    {\left(r^2+\rb^2-2r\rb \cos \th 
    +(z-\zb)^2\right)^\frac{d}{2}} 
    \diff\th \diff\rb \diff \zb
\end{multline}
and
\begin{equation} \label{horvelo}
    u_z(r,z)=\frac{d-2}{2\pi}
    \int_{-\infty}^\infty \int_0^\infty 
    \rb^{d-2} \omega(\rb,\zb)  
    \int_{-1}^1 \frac{(r y_1-\rb)
    (1-y_1^2)^\frac{d-4}{2}}
    {\left(r^2+\rb^2-2r\rb y_1 
    +(z-\zb)^2\right)^\frac{d}{2}} 
    \diff y_1 \diff\rb \diff \zb
\end{equation}
or
\begin{multline} \label{horvelo2}
    u_z(r,z)=\frac{d-2}{2\pi}
    \int_{-\infty}^\infty \int_0^\infty 
    \rb^{d-2} \omega(\rb,\zb) \\
    \int_{0}^\pi \frac{ \sin^{d-3} \th (r \cos \th -\rb)}
    {\left(r^2+\rb^2-2r\rb \cos \th 
    +(z-\zb)^2\right)^\frac{d}{2}} 
    \diff\th \diff\rb \diff \zb.
\end{multline}
\end{proposition}

\begin{proof}
We apply~\eqref{BS}. Using axisymmetry, we may set $x=re_1+ze_d$, and so $e_r=e_1$. Then
\[
\begin{split}
    u_r(r,z)
    &=
    e_1\cdot u(re_1+ze_d) \\
    &=
    -\alpha_d e_1\cdot
    \int_{\mathbb{R}^d} \omega(\rb,\zb)
    (e_{\rb} \otimes e_z -e_z \otimes e_{\rb})
    \frac{r e_1- \xb'+(z-\zb)e_z}
    {\left(r^2+|\xb'|^2- 2r \xb_1 
    +(z-\zb)^2\right)^\frac{d}{2}} \diff \xb\\
    &=
    -\alpha_d 
    \int_{\mathbb{R}^d} \omega(\rb,\zb) \frac{\xb_1}{\rb}
    \frac{z-\zb}{\left(r^2+\rb^2- 2r \xb_1 
    +(z-\zb)^2\right)^\frac{d}{2}} \diff \xb.
\end{split}
\]
Letting $y = \frac{\xb'}{\rb}$ parameterize 
$\S^{d-2} \subset \R^{d-1}$, we find that
\begin{multline}
    u_r(r,z)=-\alpha_d \int_{-\infty}^\infty
    \int_0^\infty
     \rb^{d-2} (z-\zb) \omega(\rb,\zb) \\
    \int_{\S^{d-2}} \frac{y_1}
    {\left(r^2+\rb^2-2r\rb y_1 
    +(z-\zb)^2\right)^\frac{d}{2}} 
    \diff S(y) \diff\rb \diff \zb.
\end{multline}
Expressions~\eqref{radvelo} and~\eqref{radvelo2} then follow from
applying Lemma~\ref{SphericalIntegration} on spherical integration, and observing that
\begin{equation}
    \alpha_d m_{d-3}=\frac{d-2}{2\pi},
\end{equation}
where $m_{d-3}$ is taken as in Lemma \ref{SphericalIntegration}.

The computation of $u_z$ is similar:
\[
\begin{split}
    u_z(r,z)
    &=
    e_d\cdot u(re_1+ze_d) \\
    &=
    -\alpha_d e_z\cdot
    \int_{\mathbb{R}^d} \omega(\rb,\zb)
    (e_{\rb}\otimes e_z -e_z \otimes e_{\rb})
    \frac{r e_1-\xb'+(z-\zb)e_z}
    {\left(r^2+\rb^2-2r x'_1 
    +(z-\zb)^2\right)^\frac{d}{2}} \diff \xb\\
    &=
    \alpha_d \int_{\mathbb{R}^d} \omega(\rb,\zb)
    \frac{r y_1-\rb}
    {\left(r^2+\rb^2-2r \rb y_1 
    +(z-\zb)^2\right)^\frac{d}{2}} \diff \xb\\
    &=
    \alpha_d \int_{-\infty}^\infty\int_0^\infty
    m_{d-2} \rb^{d-2} \omega(\rb,\zb)  
    \int_{\S^{d-2}} \frac{r y_1-\rb}
    {\left(r^2+\rb^2-2r\rb y_1 
    +(z-\zb)^2\right)^\frac{d}{2}} 
    \diff S(y) \diff\rb \diff \zb,
\end{split}
\]
and again applying Lemma~\ref{SphericalIntegration}
produces expressions~\eqref{horvelo} and~\eqref{horvelo2}.
\end{proof}

Next we extend an upper bound of~\cite{FengSverak} on the radial velocity from $d=3$ to all $d \geq 3$.

\begin{lemma} \label{VelocityBoundLemma}
   For all $d\geq 3$, define $F_d:[0,+\infty] \to (0,+\infty)$ by
   \begin{equation}
    F_d(s)=
    \int_0^\pi
    \frac{\th^{d-3}}{(1+2s(1-\cos(\theta))^\frac{d}{2}}  \diff\th.
   \end{equation}
   Then for all $s> 0$,
   \begin{equation} \label{LemmaBoundA}
    F_d(s)\leq \frac{\pi^{d-2}}{d-2},\quad 
    F_d(s)\leq \frac{C}{s^{\frac{d}{2}-1}},\quad
    C=
    \int_0^\infty
    \frac{\tau^{d-3}}{(1+\frac{4}{\pi^2}\tau^2)^\frac{d}{2}} \diff\tau.
   \end{equation}
\end{lemma}

\begin{proof}
    For the first inequality, observe that $1-\cos(\theta)\geq 0$, and so $F_d$ is clearly a decreasing function in $s$, and therefore
    \begin{equation*}
    F_d(s)\leq F_d(0)=\frac{\pi^{d-2}}{d-2}.
    \end{equation*}
    For the second inequality, observe that for all $0\leq \th \leq \pi,$ 
    we have $1-\cos(\theta)\geq \frac{2}{\pi^2}\th^2$. Therefore
    \begin{equation*}
    F_d(s)\leq 
    \int_0^\pi
    \frac{\th^{d-3}}{(1+\frac{4s}{\pi^2}\theta^2)^\frac{d}{2}}  \diff\th.
    \end{equation*}
    Now take the change of variables $\tau=\sqrt{s}\theta$, and we can see that
    \[
    F_d(s)
    \leq 
    \left(\frac{1}{\sqrt{s}}
    \right)^{d-2}
    \int_0^{\pi\sqrt{s}}
    \frac{\tau^{d-3}}{(1+\frac{4}{\pi^2}\tau^2)^\frac{d}{2}}  \diff\tau \leq 
    \frac{1}{s^{\frac{d}{2}-1}}
    \int_0^\infty
    \frac{\tau^{d-3}}{(1+\frac{4}{\pi^2}\tau^2)^\frac{d}{2}} \diff\tau.\qedhere
    \]
\end{proof}

\begin{proposition} \label{VeloInfinityNormBound}
Suppose $d\geq 3, \; s>\frac{d}{2}, \; 0 \leq b<\frac{1}{2},$ and $q=\frac{(d-2)b}{\frac{1}{2}-b}$. Then for all $u\in H^s_{df\&as}$, such that $\frac{\omega}{r^k}\in L^1\cap L^\infty$
and $r^q \om \in L^1$,
\begin{equation} \label{radvelobound2}
    \|u_r\|_{L^\infty}
    \leq C_{d}
    \left\|\frac{\omega}{r^k}
    \right\|_{L^\infty}^\frac{1}{2}
    \left\|\frac{\omega}{r^k}
    \right\|_{L^1}^b
    \left\|r^q \omega
    \right\|_{L^1}^{\frac{1}{2}-b},
\end{equation}
where $C_d$ depends only on $d$.
In particular, when $b=0$,
\begin{equation} \label{radvelobound1}
     \|u_r\|_{L^\infty}
    \leq C_d
    \left\|\frac{\omega}{r^k}
    \right\|_{L^\infty}^\frac{1}{2}
    \|\omega\|_{L^1}^{\frac{1}{2}}.
\end{equation}
\end{proposition}

\begin{proof}
We will prove~\eqref{radvelobound1}. Then~\eqref{radvelobound2} 
follows from writing
\[
  \om = \left( \frac{\om}{r^k} \right)^{2b} \left( r^q \om \right)^{1 - 2b}
\]
and applying H\"older's inequality. Note that the inequality in \eqref{radvelobound1} is invariant under translation in $z$, and under the rescaling (we leave this to the reader to check):
\begin{equation}
    u^\lambda(r,z)=u(\lambda r,\lambda z).
\end{equation}
Therefore it suffices to prove the inequality \eqref{radvelobound1} for $r=1, z=0$, and the result then follows for all $r>0, z\in\mathbb{R}$ by scaling and translation invariance (note that $u_r(0,z)=0$, so we get the case $r=0$ for free). Fixing $r=1, z=0$, we can apply Proposition \ref{BiotSavartCoorindate1} to observe
\begin{align*}
    u_r(1,0)
    &=
    \frac{d-2}{2\pi}
    \int_{-\infty}^\infty 
    \int_0^\infty 
    \int_{0}^\pi
    \rb^{d-2} \zb \omega(\rb,\zb)  
      \frac{\sin^{d-3} \th \cos \th}
    {\left(1+\rb^2-2\rb \cos \th 
    +\zb^2\right)^\frac{d}{2}} 
    \diff\th \diff\rb \diff \zb \\
    &=
    \frac{d-2}{2\pi}
    \int_{-\infty}^\infty 
    \int_0^\infty 
    \frac{\rb^{d-2} \zb \omega(\rb,\zb)}
    {\left((\rb-1)^2+\zb^2
    \right)^\frac{d}{2}} 
    \int_{0}^\pi
    \frac{\sin^{d-3} \th \cos \th}
    {\left(1+\frac{2\rb}
    {(\rb-1)^2+\zb^2}(1-\cos\theta)\right)^\frac{d}{2}} 
    \diff\th \diff\rb \diff \zb.
\end{align*}
Using the fact that $0\leq \sin(\theta)\leq \theta$ for all $0\leq \theta \leq \pi$, we can see that
\[
    |u_r(1,0)|
    \leq 
    \frac{d-2}{2\pi}
    \int_{-\infty}^\infty 
    \int_0^\infty 
    \frac{\rb^{d-2}|\omega(\rb,\zb)|}
    {\left((\rb-1)^2+\zb^2
    \right)^{\frac{d}{2}-\frac{1}{2}}}
    F_d\left(\frac{\rb}
    {(\rb-1)^2+\zb^2}\right)
    \diff\rb \diff \zb 
\] 
for $F_d$ defined in Lemma \ref{VelocityBoundLemma}.
Note that we have a singular integral kernel at $\rb=1, \zb=0$, but there will be some cancellation, because $s=\frac{2\rb}{(\rb-1)^2+\zb^2} \to +\infty$ as $(\rb,\zb)\to (1,0)$, and $F_d(s)\leq \frac{C}{s^{\frac{d}{2}-1}}$. 
We will now break the domain into three pieces to deal with the singular integral. We have
\begin{equation*}
    |u_r(1,0)|\leq \frac{d-2}{2\pi} \sum_{j=1}^3 I_j,\quad
    I_j
    =
    \iint\limits_{A_j}
    \frac{\rb^k|\omega(\rb,\zb)|}
    {\left((\rb-1)^2+\zb^2
    \right)^{\frac{k}{2}+\frac{1}{2}}}
    F_d\left(\frac{\rb}
    {(\rb-1)^2+\zb^2}\right)
    \diff\rb \diff \zb,
\end{equation*}
where
\[
A_1=\big\{ (\rb-1)^2+\zb^2\leq \tfrac{1}{4}\big\},\quad
A_2=\big\{\tfrac{1}{4} \leq (\rb-1)^2+\zb^2\leq 4\big\},\quad
A_3=\big\{4 \leq (\rb-1)^2+\zb^2\big\}.
\]

We begin by bounding $I_2$. Applying \eqref{LemmaBoundA} from Lemma \ref{VelocityBoundLemma}, we find that
\begin{align}
    I_2
    &\leq 
    C\iint\limits_{A_2}
    \frac{\rb^\frac{k}{2}|\omega(\rb,\zb)|}
    {\left((\rb-1)^2+\zb^2
    \right)^{\frac{1}{2}}}
    \diff\rb \diff \zb \leq 
    C \left\|\frac{\omega}{r^k}
    \right\|_{L^\infty}^\frac{1}{2}
    \iint\limits_{A_2}
    \frac{|\omega(\rb,\zb)|^\frac{1}{2}}
    {\left((\rb-1)^2+\zb^2
    \right)^{\frac{1}{2}}}
    \rb^k \diff\rb \diff \zb \\
    &\leq  \label{ExampleStep}
    C \left\|\frac{\omega}{r^k}
    \right\|_{L^\infty}^\frac{1}{2}
    \bigg(\;\iint\limits_{A_2}
    |\omega(\rb,\zb)|
    \rb^k \diff\rb \diff \zb\bigg)^\frac{1}{2}
    \bigg(\:\iint\limits_{A_2}
    \frac{1}{(\rb-1)^2+\zb^2}
    \rb^k \diff\rb \diff \zb
    \bigg)^\frac{1}{2} \\
    &\leq 
    C\left\|\frac{\omega}{r^k}
    \right\|^\frac{1}{2}
    \|\omega\|_{L^1}^\frac{1}{2},
\end{align}
where we have applied H\"older's inequality in the last step with respect to the measure $\rb^k\diff\rb\diff\zb$.

Now we will bound $I_1$, considering two cases. First suppose that 
$\left\|\frac{\omega}{r^k}
\right\|_{L^\infty}
\leq \|\omega\|_{L^1}$.
Then again applying Lemma \ref{VelocityBoundLemma} we find that
\begin{align*}
    I_1 
    &\leq 
    C\iint\limits_{(\rb-1)^2+\zb^2
    \leq \frac{1}{4}}
    \frac{\rb^\frac{k}{2}|\omega(\rb,\zb)|}
    {\left((\rb-1)^2+\zb^2
    \right)^{\frac{1}{2}}}
    \diff\rb \diff \zb \\
    &\leq 
    C\left\|\frac{\omega}{r^k}
    \right\|_{L^\infty}
    \iint\limits_{(\rb-1)^2+\zb^2\leq 
    \frac{1}{4}}
    \frac{1}
    {\left((\rb-1)^2+\zb^2
    \right)^{\frac{1}{2}}}
    \diff\rb \diff \zb \\
    &\leq
    C \left\|\frac{\omega}{r^k}
    \right\|_{L^\infty} 
    \leq 
    C\left\|\frac{\omega}{r^k}
    \right\|^\frac{1}{2}
    \|\omega\|_{L^1}^\frac{1}{2}.
\end{align*}
Now suppose that $\left\|\frac{\omega}{r^k}
\right\|_{L^\infty}
> \|\omega\|_{L^1}$,
and let $R=\frac12 {\|\omega\|_{L^1}^\frac{1}{2}}
\left\|\frac{\omega}
{r^k}\right\|_{L^\infty}^{-\frac{1}{2}}$.
Then
\begin{align*}
    I_1
    &\leq 
    \iint\limits_{(\rb-1)^2+\zb^2\leq R^2}
    \frac{\rb^\frac{k}{2}|\omega(\rb,\zb)|}
    {\left((\rb-1)^2+\zb^2
    \right)^{\frac{1}{2}}}
    \diff\rb \diff \zb
    +
    \iint\limits_{R^2\leq (\rb-1)^2+\zb^2\leq 
    \frac{1}{4}}
    \frac{\rb^\frac{k}{2}|\omega(\rb,\zb)|}
    {\left((\rb-1)^2+\zb^2
    \right)^{\frac{1}{2}}}
    \diff\rb \diff \zb \\
    &\leq 
    C \left\|\frac{\omega}{r^k}
    \right\|_{L^\infty}
    \iint\limits_{(\rb-1)^2+\zb^2\leq R^2}
    \frac{1}
    {\left((\rb-1)^2+\zb^2
    \right)^{\frac{1}{2}}}
    \diff\rb \diff \zb
    +\frac{C}{R}
    \iint\limits_{R^2\leq (\rb-1)^2+\zb^2\leq \frac{1}{4}}
    |\omega(\rb,\zb)|\rb^k 
    \diff\rb \diff\zb \\
    &\leq
    C R\left\|\frac{\omega}{r^k}
    \right\|_{L^\infty}
    +
    \frac{C}{R}
    \|\omega\|_{L^1} 
    =
    C \left\|\frac{\omega}{r^k}
    \right\|_{L^\infty}^\frac{1}{2}
    \|\omega\|_{L^1}^\frac{1}{2}.
\end{align*}

It remains now only to bound $I_3$. First suppose that 
$\|\omega\|_{L^1} \leq 
\left\|\frac{\omega}{r^k}
\right\|_{L^\infty}$.
Then we can see from the bound \eqref{LemmaBoundA} from Lemma \ref{VelocityBoundLemma} that
\begin{align*}
    I_3
    &\leq 
    C \iint\limits_{(\rb-1)^2+\zb^2 \geq 4}
    \frac{\rb^k|\omega(\rb,\zb)|}
    {\left((\rb-1)^2+\zb^2
    \right)^{\frac{k}{2}+\frac{1}{2}}}
    \diff\rb \diff \zb \\
    &\leq 
    C \iint\limits_{(\rb-1)^2+\zb^2 \geq 4}
    \rb^k|\omega(\rb,\zb)|
    \diff\rb \diff \zb 
    \leq 
    C\|\omega\|_{L^1} 
    \leq 
    C\|\omega\|_{L^1}^\frac{1}{2}
    \left\|\frac{\omega}{r^k}
    \right\|_{L^\infty}^\frac{1}{2}.
\end{align*}
Next suppose that $\|\omega\|_{L^1} >
\left\|\frac{\omega}{r^k}
\right\|_{L^\infty}$. Let 
$R=2\left({\|\omega\|_{L^1}}{\left\|\frac{\omega}{r^k}
\right\|^{-1}_{L^\infty}}\right)^\frac{1}{2(k+1)}$.
Then 
\begin{align*}
    I_3
    &\leq 
    C \iint\limits_{4\leq (\rb-1)^2+\zb^2 
    \leq R^2}
    \frac{\rb^k|\omega(\rb,\zb)|}
    {\left((\rb-1)^2+\zb^2
    \right)^{\frac{k}{2}+\frac{1}{2}}}
    \diff\rb \diff \zb
    + 
    C \iint\limits_{(\rb-1)^2+\zb^2 \geq R^2}
    \frac{\rb^k|\omega(\rb,\zb)|}
    {\left((\rb-1)^2+\zb^2
    \right)^{\frac{k}{2}+\frac{1}{2}}}
    \diff\rb \diff \zb \\
    &\leq 
    C \left\|\frac{\omega}{r^k}
    \right\|_{L^\infty}
    \iint\limits_{4\leq (\rb-1)^2+\zb^2 
    \leq R^2}
    \frac{\rb^{2k}}
    {\left((\rb-1)^2+\zb^2
    \right)^{\frac{k}{2}+\frac{1}{2}}}
    \diff\rb \diff \zb
    +\frac{C}{R^{k+1}}
    \iint\limits_{(\rb-1)^2+\zb^2 \geq R^2}
   |\omega(\rb,\zb)|
    \rb^k \diff\rb \diff \zb \\
    &\leq 
    CR^{k+1} 
    \left\|\frac{\omega}{r^k}
    \right\|_{L^\infty}
    +\frac{C}{R^{k+1}}\|\omega\|_{L^1} 
    =
    C \|\omega\|_{L^1}^\frac{1}{2}
    \left\|\frac{\omega}{r^k}
    \right\|_{L^\infty}^\frac{1}{2}.
\end{align*}

We have now shown, by bounding $I_1, I_2,$ and $I_3$,  that
\begin{equation}
    |u_r(1,0)|\leq 
    C_d \|\omega\|_{L^1}^\frac{1}{2}
    \left\|\frac{\omega}{r^k}
    \right\|_{L^\infty}^\frac{1}{2},
\end{equation}
which completes the proof. Note that the dependence of the constant $C_d$ on the dimension comes from the factor of $\frac{d-2}{2\pi}$ from the Biot-Savart law, the constants in Lemma \ref{VelocityBoundLemma} that are expressed explicitly in terms of $d$, as well as in the bounds on certain definite integrals (e.g. \eqref{ExampleStep}).
\end{proof}

\section{Conditional regularity in four and higher dimensions}
\label{RegCritSection}

In this section we will prove conditional regularity results in four and higher dimensions when the advected quantity, $\frac{\omega}{r^{d-2}}$, is bounded for the initial data.

\begin{proposition} \label{VortBoundProp}
Suppose $u\in C\left([0,T_{max}),H^s_{df}
\left(\mathbb{R}^d\right)\right)
\cap C^1\left([0,T_{max}),H^{s-1}_{df}
\left(\mathbb{R}^d\right)\right),
d\geq 3, s>2+\frac{d}{2}$, is an axisymmetric, swirl-free solution of the Euler equation,
and that $\frac{\omega^0}{r^k}\in L^\infty$.
Then for all $R>0$ and for all $0\leq t<T_{max}$,
\begin{equation}
    \|\omega(\cdot,t)\|_{L^\infty}
    \leq
    \max\left(\left\|\omega^0
    \right\|_{L^\infty(\mathcal{C}_R^c)},
    R^k \left\|\frac{\omega^0}{r^k}
    \right\|_{L^\infty(\mathcal{C}_R)}\right)
    \left(1+\frac{1}{R}\int_0^t
    \|u_r^+(\cdot,\tau)\|_{L^\infty}
    \diff \tau\right)^k.
\end{equation}
Recall that $\mathcal{C}_R$ is the cylinder
$\left\{(r,z): r<R\right\}$.
\end{proposition}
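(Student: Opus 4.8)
The plan is to follow the vorticity along Lagrangian trajectories and exploit that $\frac{\omega}{r^k}$ is transported. By the preceding proposition, for each initial label $(r_0,z_0)$ we have the identity $\omega(X(r_0,z_0,t),t) = \frac{\omega^0(r_0,z_0)}{r_0^k}\, X_r(r_0,z_0,t)^k$, where $X$ is the flow map and $X_r$ is its radial coordinate. Since the velocity field is $C^1$, divergence-free, and decaying at infinity, $X(\cdot,t)$ is a volume-preserving diffeomorphism of $\mathbb{R}^d$ onto itself, so every point in space is $X(r_0,z_0,t)$ for a unique $(r_0,z_0)$ and hence $\|\omega(\cdot,t)\|_{L^\infty} = \sup_{(r_0,z_0)} |\omega(X(r_0,z_0,t),t)|$. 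The entire estimate thus reduces to bounding $\frac{|\omega^0(r_0,z_0)|}{r_0^k}\, X_r(r_0,z_0,t)^k$ uniformly in the initial label.

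The second step is to control the radial growth of a single trajectory. Differentiating $X_r = |X'|$ along the flow gives $\frac{\diff}{\diff t}X_r = e_r\cdot u = u_r(X,t)$, so that $\left|\frac{\diff}{\diff t}X_r\right| \leq \|u(\cdot,t)\|_{L^\infty}$ and therefore $X_r(r_0,z_0,t) \leq r_0 + \int_0^t \|u(\cdot,\tau)\|_{L^\infty}\diff\tau$. Writing $I(t) = \int_0^t \|u(\cdot,\tau)\|_{L^\infty}\diff\tau$ and substituting into the transported-vorticity identity yields the pointwise bound $|\omega(X(r_0,z_0,t),t)| \leq |\omega^0(r_0,z_0)|\left(1 + \frac{I(t)}{r_0}\right)^k$.

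The final step is to split the supremum over initial labels according to whether $(r_0,z_0)$ lies in $\mathcal{C}_R$ or in $\mathcal{C}_R^c$. Outside the cylinder, $r_0 \geq R$ forces $\left(1+\frac{I(t)}{r_0}\right)^k \leq \left(1+\frac{I(t)}{R}\right)^k$, and $|\omega^0(r_0,z_0)| \leq \|\omega^0\|_{L^\infty(\mathcal{C}_R^c)}$, producing the first term inside the maximum. Inside the cylinder I would instead refactor as $|\omega^0(r_0,z_0)|\left(1+\frac{I(t)}{r_0}\right)^k = \frac{|\omega^0(r_0,z_0)|}{r_0^k}(r_0+I(t))^k$ and use $r_0 < R$ together with $\frac{|\omega^0(r_0,z_0)|}{r_0^k} \leq \|\frac{\omega^0}{r^k}\|_{L^\infty(\mathcal{C}_R)}$ to obtain $R^k\,\|\frac{\omega^0}{r^k}\|_{L^\infty(\mathcal{C}_R)}\left(1+\frac{I(t)}{R}\right)^k$, the second term. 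Taking the maximum of the two cases gives exactly the stated inequality.

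All of the computations here are elementary, so there is no deep analytic obstacle; the points requiring genuine care are organizational. The first is the covering argument in step one, namely that the spatial $L^\infty$ norm is captured by the supremum over initial labels, including trajectories emanating from the axis $r_0=0$ (where, because smooth axisymmetric vorticity vanishes on the axis and such particles remain on the axis, the contribution is simply zero). The second is the monotone refactorization in the case split, where the key trick is to keep the quantity $\frac{\omega^0}{r^k}$ bundled together inside $\mathcal{C}_R$ but to retain $\omega^0$ itself outside, and to note that the nonlinear exponent $k$ causes no difficulty since $(\,\cdot\,)^k$ is increasing on the nonnegative reals.
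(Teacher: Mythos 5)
Your proposal is correct and follows essentially the same route as the paper's proof: the transported quantity $\frac{\omega}{r^k}$ evaluated along Lagrangian trajectories, the bound $X_r(r_0,z_0,t)\leq r_0+\int_0^t\|u(\cdot,\tau)\|_{L^\infty}\diff\tau$, the case split over $\mathcal{C}_R$ versus $\mathcal{C}_R^c$ with the same refactorization inside the cylinder, and the identification $\|\omega\circ X(\cdot,t)\|_{L^\infty}=\|\omega(\cdot,t)\|_{L^\infty}$ via the flow map being a (volume-preserving) diffeomorphism. Your explicit treatment of trajectories starting on the axis $r_0=0$ is a small point of care the paper leaves implicit, but it does not change the argument.
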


\begin{proof}
Again letting $X(r,z,t)$ be the associated flow map, and recalling Proposition \ref{LagrangeMap},
we let
\begin{equation}
    \omega(X(r,z,t),t)=
    \frac{\omega^0(r,z)}{r^k}X_r(r,z,t)^k.
\end{equation}
We know that
\begin{equation}
    X_r(r,z,t)=r+\int_0^t
    u_r(X(r,z,\tau),\tau) \diff\tau,
\end{equation}
and so we can conclude that for all
$0\leq t<T_{max}, r\in\mathbb{R}^+, z\in\mathbb{R}$,
\begin{equation}
    X_r(r,z,t)\leq r+\int_0^t 
    \|u_r^+(\cdot,\tau)\|_{L^\infty} \diff\tau.
\end{equation}
This clearly implies that
\begin{equation}
    |\omega(X(r,z,t),t)|
    \leq 
    \frac{|\omega^0(r,z)|}{r^k}
    \left(r+\int_0^t \|u_r^+(\cdot,\tau)
    \|_{L^\infty}\diff\tau\right)^k.
\end{equation}

Now fix $R>0$.
We can clearly see that for all 
$0\leq r\leq R$,
\begin{align}
    |\omega(X(r,z,t),t)|
    &\leq 
    \frac{|\omega^0(r,z)|}{r^k}
    \left(R+\int_0^t \|u_r^+(\cdot,\tau)
    \|_{L^\infty}\diff\tau\right)^k \\
    &=
    \frac{|\omega^0(r,z)|}{r^k}
     R^k \left(1+\frac{1}{R}\int_0^t \|u_r^+(\cdot,\tau)
    \|_{L^\infty}\diff\tau\right)^k.
\end{align}
Likewise, for all $r\geq R$,
\begin{align}
    |\omega(X(r,z,t),t)|
    &\leq 
    |\omega^0(r,z)|
    \left(1+\frac{1}{r}
    \int_0^t \|u_r^+(\cdot,\tau)\|_{L^\infty}
    \diff\tau \right)^k \\
    &\leq 
    |\omega^0(r,z)|
    \left(1+\frac{1}{R}\int_0^t 
    \|u_r^+(\cdot,\tau)\|_{L^\infty}
    \diff\tau \right)^k.
\end{align}
Therefore, we can clearly see that for all $0\leq t<T_{max}$,
\begin{equation}
    \|\omega \circ X(\cdot,t)\|_{L^\infty}
    \leq
    \max\left(\left\|\omega^0
    \right\|_{L^\infty(\mathcal{C}_R^c)},
    R^k\left\|\frac{\omega^0}{r^k}
    \right\|_{L^\infty(\mathcal{C}_R)}\right)
    \left(1+\frac{1}{R}\int_0^t
    \|u_r^+(\cdot,\tau)\|_{L^\infty}
    \diff \tau\right)^k.
\end{equation}
Recall that $X(\cdot,t)$ is a volume-preserving diffeomorphism, we see that
\begin{equation}
    \|\omega\circ X(\cdot,t)\|_{L^\infty}
    =\|\omega(\cdot,t)\|_{L^\infty},
\end{equation}
and this completes the proof.
\end{proof}

\begin{corollary}
Suppose $u\in C\left([0,T_{max}),H^s_{df}
\left(\mathbb{R}^d\right)\right)
\cap C^1\left([0,T_{max}),H^{s-1}_{df}
\left(\mathbb{R}^d\right)\right),
d\geq 4, s>2+\frac{d}{2}$ is an axisymmetric, swirl-free solution of the Euler equation, that $\frac{\omega^0}{r^k}\in L^\infty$.
If $T_{max}<+\infty$, then
\begin{equation}
    \int_0^{T_{max}}
    \|u_r^+(\cdot,t)\|_{L^\infty}
    \diff t=+\infty.
\end{equation}
\end{corollary}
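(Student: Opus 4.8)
The plan is to argue by contradiction, combining the vorticity growth bound from Proposition \ref{VortBoundProp} with the Beale--Kato--Majda criterion \eqref{RegCritEuler} of Theorem \ref{AxisymExistence}. Suppose, towards a contradiction, that
\begin{equation}
    \int_0^{T_{max}} \|u(\cdot,t)\|_{L^\infty} \diff t = M < +\infty.
\end{equation}
The first thing I would check is that the constants appearing on the right-hand side of Proposition \ref{VortBoundProp} are finite. Since $u \in H^s$ with $s > 2+\frac{d}{2}$, Sobolev embedding gives $\omega \in H^{s-1} \hookrightarrow L^\infty$, so $\omega^0 \in L^\infty$; fixing any $R > 0$, it follows that $\|\omega^0\|_{L^\infty(\mathcal{C}_R^c)}$ is finite, while $R^k \left\|\frac{\omega^0}{r^k}\right\|_{L^\infty(\mathcal{C}_R)}$ is finite precisely because of the hypothesis $\frac{\omega^0}{r^k} \in L^\infty$.

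Next I would apply Proposition \ref{VortBoundProp} with this fixed $R$. Because $\int_0^t \|u(\cdot,\tau)\|_{L^\infty} \diff\tau \leq M$ for every $0 \leq t < T_{max}$, the factor $\left(1 + \frac{1}{R}\int_0^t \|u(\cdot,\tau)\|_{L^\infty} \diff\tau\right)^k$ is bounded above by $\left(1 + \frac{M}{R}\right)^k$ uniformly in $t$. Hence there is a constant $C$, depending only on $R$, $M$, $k$, and the initial data, such that $\|\omega(\cdot,t)\|_{L^\infty} \leq C$ for all $0 \leq t < T_{max}$. Integrating this uniform bound over the finite interval $[0,T_{max})$ yields
\begin{equation}
    \int_0^{T_{max}} \|\omega(\cdot,t)\|_{L^\infty} \diff t \leq C\,T_{max} < +\infty,
\end{equation}
which directly contradicts the regularity criterion \eqref{RegCritEuler}. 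This contradiction forces $\int_0^{T_{max}} \|u(\cdot,t)\|_{L^\infty} \diff t = +\infty$.

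There is no serious obstacle here once Proposition \ref{VortBoundProp} is available; the only point requiring care is the finiteness of the initial-data quantities, which is exactly where the boundedness of $\frac{\omega^0}{r^k}$ is used. I would emphasize that the nonlinearity of the bound—the exponent $k \geq 2$ for $d \geq 4$—is completely harmless in this argument, since I only need a uniform-in-time bound on $\|\omega(\cdot,t)\|_{L^\infty}$ rather than a Gr\"onwall-type closure. This stands in contrast to the $d=3$ case, where $k=1$ makes the analogous bound linear in $\int_0^t \|u\|_{L^\infty}$ and thus rules out finite-time blowup altogether via Gr\"onwall's inequality; here the hypothesis $T_{max} < +\infty$ is not vacuous, and the conclusion is instead a lower bound forcing the velocity to be non-integrable in time up to the blowup.
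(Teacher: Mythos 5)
Your proof is correct and takes essentially the same approach as the paper, whose entire proof is the one-line observation that the result follows immediately from Proposition \ref{VortBoundProp} and the Beale--Kato--Majda criterion. Your write-up simply makes that implication explicit: assuming $\int_0^{T_{max}}\|u(\cdot,t)\|_{L^\infty}\diff t<+\infty$ turns the bound of Proposition \ref{VortBoundProp} into a uniform-in-time bound on $\|\omega(\cdot,t)\|_{L^\infty}$, whose integral over the finite interval $[0,T_{max})$ then contradicts \eqref{RegCritEuler}.
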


\begin{proof}
This follows immediately from Proposition \ref{VortBoundProp} and the Beale-Kato-Majda theorem.
\end{proof}

\begin{proposition} \label{VortBoundPropQ}
Suppose $u\in C\left([0,T_{max}),H^s_{df}
\left(\mathbb{R}^d\right)\right)
\cap C^1\left([0,T_{max}),H^{s-1}_{df}
\left(\mathbb{R}^d\right)\right),
d\geq 3, s>2+\frac{d}{2}$, is an axisymmetric, swirl-free solution of the Euler equation,
and that $\frac{\omega^0}{r^k}\in L^\infty$.
Then for all $1\leq q<+\infty$, for all $R>0$,
and for all $0\leq t<T_{max}$,
\begin{equation}
    \|\omega(\cdot,t)\|_{L^q}
    \leq
    \left(\left\|\omega^0
    \right\|_{L^q(\mathcal{C}_R^c)}^q
    +R^{kq} \left\|\frac{\omega^0}{r^k}
    \right\|_{L^q(\mathcal{C}_R)}^q
    \right)^\frac{1}{q}
    \left(1+\frac{1}{R}\int_0^t
    \|u_r^+(\cdot,\tau)\|_{L^\infty}
    \diff \tau\right)^k.
\end{equation}
\end{proposition}

\begin{proof}
Begin by fixing $R>0$.
As we have already shown above,
for all $0\leq r\leq R$,
\begin{align}
    |\omega(X(r,z,t),t)|
    &\leq 
    \frac{|\omega^0(r,z)|}{r^k}
    \left(R+\int_0^t \|u_r^+(\cdot,\tau)
    \|_{L^\infty}\diff\tau\right)^k \\
    &=
    \frac{|\omega^0(r,z)|}{r^k}
     R^k \left(1+\frac{1}{R}\int_0^t \|u_r^+(\cdot,\tau)
    \|_{L^\infty}\diff\tau\right)^k,
\end{align}
and likewise, for all $r\geq R$,
\begin{align}
    |\omega(X(r,z,t),t)|
    &\leq 
    |\omega^0(r,z)|
    \left(1+\frac{1}{r}
    \int_0^t \|u_r^+(\cdot,\tau)\|_{L^\infty}
    \diff\tau \right)^k \\
    &\leq 
    |\omega^0(r,z)|
    \left(1+\frac{1}{R}\int_0^t 
    \|u_r^+(\cdot,\tau)\|_{L^\infty}
    \diff\tau \right)^k.
\end{align}
Therefore, we can clearly 
see that for all $0\leq t<T_{max}$,
\begin{equation}
    \|\omega\circ X(\cdot,t)\|_{L^q
    \left(\mathcal{C}_R\right)}
    \leq 
    \left\|\frac{\omega^0}{r^k}\right\|_{L^q
    \left(\mathcal{C}_R\right)}
     R^k \left(1+\frac{1}{R}\int_0^t \|u_r^+(\cdot,\tau)
    \|_{L^\infty}\diff\tau\right)^k,
\end{equation}
and that
\begin{equation}
    \|\omega\circ X(\cdot,t)\|_{L^q
    \left(\mathcal{C}_R^c\right)}
    \leq 
    \left\|\omega^0\right\|_{L^q
    \left(\mathcal{C}_R^c\right)}
     \left(1+\frac{1}{R}\int_0^t \|u_r^+(\cdot,\tau)
    \|_{L^\infty}\diff\tau\right)^k.
\end{equation}

We can compute that
\begin{align*}
    \|\omega\circ X(\cdot,t)\|_{L^q
    \left(\mathbb{R}^d\right)}^q
    &=
    \|\omega\circ X(\cdot,t)\|_{L^q
    \left(\mathcal{C}_R\right)}^q
    +\|\omega\circ X(\cdot,t)\|_{L^q
    \left(\mathcal{C}_R^c\right)}^q \\
    &\leq
    \left(\left\|\frac{\omega^0}{r^k}
    \right\|_{L^q
    \left(\mathcal{C}_R\right)}^q R^{kq} 
    +\left\|\omega^0\right\|_{L^q
    \left(\mathcal{C}_R^c\right)}^q
    \right)
    \left(1+\frac{1}{R}\int_0^t \|u_r^+(\cdot,\tau)
    \|_{L^\infty}\diff\tau\right)^{kq}
\end{align*}

Recall that $X(\cdot,t)$ is a volume-preserving diffeomorphism, we see that
\begin{equation}
    \|\omega\circ X(\cdot,t)\|_{L^q}
    =\|\omega(\cdot,t)\|_{L^q},
\end{equation}
and this completes the proof.
\end{proof}

\begin{proposition} \label{VeloBoundProp}
Suppose $u\in C\left([0,T_{max}),H^s_{df}
\left(\mathbb{R}^d\right)\right)
\cap C^1\left([0,T_{max}),H^{s-1}_{df}
\left(\mathbb{R}^d\right)\right),
d\geq 3, s>2+\frac{d}{2}$, is an axisymmetric, swirl-free solution of the Euler equation, and that
$\frac{\omega^0}{r^k} \in L^1 \cap L^\infty$.
Then for all $R>0$ and for all $0\leq t<T_{max}$
\begin{multline} \label{UmaxGrowth}
    \|u_r^+(\cdot,t)\|_{L^\infty}
    \leq C_d \left\|\frac{\omega^0}{r^k}
    \right\|_{L^\infty}^\frac{1}{2}
    \left(\left\|\omega^0
    \right\|_{L^{1}(\mathcal{C}_R^c)}
    +R^k\left\|\frac{\omega^0}{r^k}
    \right\|_{L^{1}(\mathcal{C}_R)}
    \right)^\frac{1}{2} \\
    \left(1+\frac{1}{R}\int_0^t
    \|u_r^+(\cdot,\tau)\|_{L^\infty} 
    \diff\tau \right)^\frac{k}{2}.
\end{multline}
Furthermore, letting
\begin{equation}
    f(t)=1+\frac{1}{R}\int_0^t
    \|u_r^+(\cdot,\tau)\|_{L^\infty} 
    \diff\tau,
\end{equation}
we can conclude that 
for all $0\leq t<T_{max}$,
\begin{equation}
    \frac{\diff f}{\diff t}
    \leq \mu f^\frac{k}{2},
\end{equation}
where
\begin{equation}
    \mu=\frac{C_d}{R} \left\|\frac{\omega^0}{r^k}
    \right\|_{L^\infty}^\frac{1}{2}
    \left(\left\|\omega^0
    \right\|_{L^{1}(\mathcal{C}_R^c)}
    +R^k\left\|\frac{\omega^0}{r^k}
    \right\|_{L^{1}(\mathcal{C}_R)}
    \right)^\frac{1}{2}.
\end{equation}
\end{proposition}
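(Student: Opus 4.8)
The plan is to reduce the velocity estimate \eqref{UmaxGrowth} to an $L^{d,1}$ bound on the vorticity and then read off the differential inequality by differentiating $f$. The engine is Lemma \ref{BiotSavartBound}, which gives $\|u(\cdot,t)\|_{L^\infty} \le C_d \|\omega(\cdot,t)\|_{L^{d,1}}$ at each fixed time, so everything comes down to controlling $\|\omega(\cdot,t)\|_{L^{d,1}}$ by the right-hand side of \eqref{UmaxGrowth}. The overall strategy mirrors Proposition \ref{VortBoundProp}, with the $L^\infty$ norm replaced by the $L^{d,1}$ quasinorm throughout.

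First I would recall the Lagrangian representation used in the proof of Proposition \ref{VortBoundProp}: writing $X$ for the flow map, $\omega(X(r,z,t),t) = \frac{\omega^0(r,z)}{r^k}X_r(r,z,t)^k$ with $X_r(r,z,t) \le r + \int_0^t \|u(\cdot,\tau)\|_{L^\infty}\diff\tau$. Exactly as in that proposition, splitting according to whether $r \le R$ or $r \ge R$ yields the pointwise bound $|\omega(X(\cdot,t),t)| \le g\, f(t)^k$, where $g = R^k\frac{|\omega^0|}{r^k}$ on $\mathcal{C}_R$ and $g = |\omega^0|$ on $\mathcal{C}_R^c$, and $f(t) = 1 + \frac{1}{R}\int_0^t \|u(\cdot,\tau)\|_{L^\infty}\diff\tau$ is the function appearing in the statement.

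The key step is to transfer this pointwise bound into an $L^{d,1}$ bound. Because the flow map $X(\cdot,t)$ is a volume-preserving diffeomorphism, $\omega(\cdot,t)$ and $\omega\circ X(\cdot,t)$ share the same distribution function, so $\|\omega(\cdot,t)\|_{L^{d,1}} = \|\omega\circ X(\cdot,t)\|_{L^{d,1}}$; this is the rearrangement-invariance analogue of the $L^\infty$ identity exploited in Proposition \ref{VortBoundProp}. Monotonicity and homogeneity of the Lorentz quasinorm then give $\|\omega\circ X(\cdot,t)\|_{L^{d,1}} \le f(t)^k\|g\|_{L^{d,1}}$. Since $g$ is supported disjointly on $\mathcal{C}_R$ and $\mathcal{C}_R^c$, subadditivity over disjoint supports (which for the first Lorentz index follows from additivity of the distribution function together with the subadditivity of $s \mapsto s^{1/d}$) yields $\|g\|_{L^{d,1}} \le R^k\|\frac{\omega^0}{r^k}\|_{L^{d,1}(\mathcal{C}_R)} + \|\omega^0\|_{L^{d,1}(\mathcal{C}_R^c)}$. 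Combining these with Lemma \ref{BiotSavartBound} produces \eqref{UmaxGrowth}. The differential inequality is then immediate: $f$ is $C^1$ with $f'(t) = \frac{1}{R}\|u(\cdot,t)\|_{L^\infty}$, and inserting \eqref{UmaxGrowth} gives $f'(t) \le \mu f(t)^k$ with $\mu$ exactly as defined.

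The main obstacle is making the Lorentz-space manipulations rigorous rather than the analysis proper: specifically, justifying that the measure-preserving change of variables leaves $\|\omega(\cdot,t)\|_{L^{d,1}}$ unchanged, and that the Lorentz quasinorm is monotone under pointwise domination and subadditive over the disjoint decomposition $\mathcal{C}_R \sqcup \mathcal{C}_R^c$. Once these standard facts about $L^{d,1}$ are in place, the remaining work is bookkeeping that parallels Proposition \ref{VortBoundProp} step for step.
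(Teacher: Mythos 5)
Your proposal is correct and follows essentially the same route as the paper's proof: the Lagrangian bound from Proposition \ref{VortBoundProp} split over $\mathcal{C}_R$ and $\mathcal{C}_R^c$, invariance of the $L^{d,1}$ quasinorm under the volume-preserving flow map via the distribution function, combination of the two pieces, Lemma \ref{BiotSavartBound}, and the fundamental theorem of calculus for the differential inequality. Your justification of subadditivity over disjoint supports (additivity of distribution functions plus subadditivity of $s \mapsto s^{1/d}$) is in fact slightly more careful than the paper, which simply invokes the triangle inequality for the Lorentz quasinorm.
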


\begin{proof}
Fix $R>0$.
Applying Propositions \ref{VeloInfinityNormBound}
and \ref{VortBoundPropQ}, we find that 
\begin{align*}
    \|u_r^+(\cdot,t)\|_{L^\infty}
    &\leq C_d
    \left\|\frac{\omega(\cdot,t)}{r^k}
    \right\|_{L^\infty}^\frac{1}{2}
    \|\omega(\cdot,t)
    \|_{L^1}^{\frac{1}{2}} \\
    & \leq 
    \left\|\frac{\omega^0}{r^k}
    \right\|_{L^\infty}^\frac{1}{2}
    \left(\left\|\omega^0
    \right\|_{L^1(\mathcal{C}_R^c)}
    +R^k \left\|\frac{\omega^0}{r^k}
    \right\|_{L^1(\mathcal{C}_R)}
    \right)^\frac{1}{2}
    \left(1+\frac{1}{R}\int_0^t
    \|u_r^+(\cdot,\tau)\|_{L^\infty}
    \diff \tau\right)^\frac{k}{2}.
\end{align*}
Finally, we can use the fundamental theorem of calculus to compute that
\begin{align}
    \frac{\diff f}{\diff t}
    &=
    \frac{1}{R}\|u_r^+(\cdot,t)
    \|_{L^\infty} \\
    &\leq 
    \mu f^\frac{k}{2}.
\end{align}
This completes the proof.
\end{proof}

\begin{theorem} \label{4Dregularity}
Suppose the initial data $u^0\in H^s_{df}
\left(\mathbb{R}^4\right), s>4$
is axisymmetric and swirl-free and $\frac{\omega^0}{r^2}\in L^1\cap L^\infty$.
Then there exists a global smooth solution of the Euler equation $u\in C\left([0,+\infty),H^s_{df}
\left(\mathbb{R}^4\right)\right)
\cap C^1\left([0,+\infty),H^{s-1}_{df}
\left(\mathbb{R}^4\right)\right)$. Furthermore, we have a bound on vorticity,
with for all $R>0$ and for all $0\leq t<+\infty$,
\begin{equation} \label{ExpBound4}
    \|\omega(\cdot,t)\|_{L^\infty}
    \leq
    \max\left(\left\|\omega^0
    \right\|_{L^\infty(\mathcal{C}_R^c)},
    R^2 \left\|\frac{\omega^0}{r^2}
    \right\|_{L^\infty(\mathcal{C}_R)}\right)
    \exp(2\mu t),
\end{equation}
where
\begin{equation}
    \mu=\frac{C_d}{R} \left\|\frac{\omega^0}{r^k}
    \right\|_{L^\infty}^\frac{1}{2}
    \left(\left\|\omega^0
    \right\|_{L^{1}(\mathcal{C}_R^c)}
    +R^2\left\|\frac{\omega^0}{r^2}
    \right\|_{L^{1}(\mathcal{C}_R)}
    \right)^\frac{1}{2}.
\end{equation}
\end{theorem}

\begin{proof}
We know that such a solution must exist locally-in-time up until some maximal time $T_{max}$.
By the Beale-Kato-Majda criterion that if the bound \eqref{ExpBound4} holds for all times $0\leq t<T_{max}$, then we must have $T_{max}=+\infty$,
and therefore it suffices to prove this bound holds up until the maximal time of existence.

We will begin by fixing $R>0$ and letting
\begin{equation}
    f(t)=
    1+\frac{1}{R}\int_0^t
    \|u_r^+(\cdot,\tau)\|_{L^\infty} \diff\tau.
\end{equation}
We can see from Proposition \ref{VortBoundProp} that
\begin{equation}
    \|\omega(\cdot,t)\|_{L^\infty}
    \leq
    \max\left(\left\|\omega^0
    \right\|_{L^\infty(\mathcal{C}_R^c)},
    R^2 \left\|\frac{\omega^0}{r^2}
    \right\|_{L^\infty(\mathcal{C}_R)}\right)
    f(t)^2.
\end{equation}
We know that from Proposition \ref{VeloInfinityNormBound} that
\begin{equation}
    \frac{\diff f}{\diff t}\leq \mu f,
\end{equation}
and we can see by definition that $f(0)=1$.
Applying Gr\"onwall's inequality, we find that for all $0\leq t<T_{max}$,
\begin{equation}
    f(t)\leq \exp(\mu t),
\end{equation}
and consequently
\begin{equation}
    \|\omega(\cdot,t)\|_{L^\infty}
    \leq
    \max\left(\left\|\omega^0
    \right\|_{L^\infty(\mathcal{C}_R^c)},
    R^2 \left\|\frac{\omega^0}{r^2}
    \right\|_{L^\infty(\mathcal{C}_R)}\right)
    \exp(2\mu t).
\end{equation}
This completes the proof.
\end{proof}

\begin{theorem} \label{3Dregularity}
Suppose the initial data $u^0\in H^s_{df}
\left(\mathbb{R}^3\right), s>\frac{7}{2}$
is axisymmetric and swirl-free. Then there exists a global smooth solution of the Euler equation $u\in C\left([0,+\infty),H^s_{df}
\left(\mathbb{R}^3\right)\right)
\cap C^1\left([0,+\infty),H^{s-1}_{df}
\left(\mathbb{R}^3\right)\right)$. Furthermore, we have a bound on vorticity,
with for all $R>0$ and for all $0\leq t<+\infty$,
\begin{equation} \label{QuadBound3}
    \|\omega(\cdot,t)\|_{L^\infty}
    \leq
    \max\left(\left\|\omega^0
    \right\|_{L^\infty(\mathcal{C}_R^c)},
    R \left\|\frac{\omega^0}{r}
    \right\|_{L^\infty(\mathcal{C}_R)}\right)
    \left(1+\frac{1}{2}\mu t\right)^2,
\end{equation}
where
\begin{equation}
    \mu=\frac{C_d}{R} \left\|\frac{\omega^0}{r}
    \right\|_{L^\infty}^\frac{1}{2}
    \left(\left\|\omega^0
    \right\|_{L^{1}(\mathcal{C}_R^c)}
    +R\left\|\frac{\omega^0}{r}
    \right\|_{L^{1}(\mathcal{C}_R)}
    \right)^\frac{1}{2}.
\end{equation}
\end{theorem}

\begin{proof}
We know that such a solution must exist locally-in-time up until some maximal time $T_{max}$.
By the Beale-Kato-Majda criterion that if the bound \eqref{QuadBound3} holds for all times $0\leq t<T_{max}$, then we must have $T_{max}=+\infty$,
and therefore it suffices to prove this bound holds up until the maximal time of existence.

We will begin by fixing $R>0$ and letting
\begin{equation}
    f(t)=
    1+\frac{1}{R}\int_0^t
    \|u_r^+(\cdot,\tau)\|_{L^\infty} \diff\tau.
\end{equation}
We can see from Proposition \ref{VortBoundProp} that
\begin{equation}
    \|\omega(\cdot,t)\|_{L^\infty}
    \leq
    \max\left(\left\|\omega^0
    \right\|_{L^\infty(\mathcal{C}_R^c)},
    R \left\|\frac{\omega^0}{r}
    \right\|_{L^\infty(\mathcal{C}_R)}\right)
    f(t).
\end{equation}
We know that from Proposition \ref{VeloInfinityNormBound} that
\begin{equation}
    \frac{\diff f}{\diff t}\leq \mu f^\frac{1}{2},
\end{equation}
and we can see by definition that $f(0)=1$.
Integrating this differential inequality, we find that for all $0\leq t<T_{max}$,
\begin{equation}
    f(t)\leq \left(1+\frac{1}{2}\mu t\right)^2,
\end{equation}
and this completes the proof.
\end{proof}

\begin{theorem} \label{UpperBound5D+}
    Suppose $u\in C\left([0,T_{max}),H^s_{df}
\left(\mathbb{R}^d\right)\right)
\cap C^1\left([0,T_{max}),H^{s-1}_{df}
\left(\mathbb{R}^d\right)\right),
d\geq 5, s>2+\frac{d}{2}$, is an axisymmetric, swirl-free solution of the Euler equation,
and that $\frac{\omega^0}{r^k}
\in L^1\cap L^\infty$.
Then for all $R>0$ and for all 
$0\leq t<T_{max}$,
\begin{equation} \label{VortBound}
    \|\omega(\cdot,t)\|_{L^\infty}
    \leq
    \frac{\max\left(
    \left\|\omega^0\right\|_{L^\infty},
    R^k\left\|\frac{\omega^0}{r^k}
    \right\|_{L^\infty}\right)}
    {(1-\alpha t)^\frac{2(d-2)}{d-4}},
\end{equation}
where
\begin{equation}
    \alpha= \frac{(d-4)C_d}{2R} \left\|\frac{\omega^0}{r^k}
    \right\|_{L^\infty}^\frac{1}{2}
    \left(\left\|\omega^0
    \right\|_{L^{1}(\mathcal{C}_R^c)}
    +R^k\left\|\frac{\omega^0}{r^k}
    \right\|_{L^{1}(\mathcal{C}_R)}
    \right)^\frac{1}{2}.
\end{equation}
In particular, this implies that
\begin{equation}
    T_{max} \geq 
    \frac{2R}
    {(d-4)C_d \left\|\frac{\omega^0}{r^k}
    \right\|_{L^\infty}^\frac{1}{2}
    \left(\left\|\omega^0
    \right\|_{L^{1}(\mathcal{C}_R^c)}
    +R^k\left\|\frac{\omega^0}{r^k}
    \right\|_{L^{1}(\mathcal{C}_R)}
    \right)^\frac{1}{2}}.
\end{equation}
\end{theorem}

\begin{proof}
We know from the Beale-Kato-Majda theorem that if $T_{max}<+\infty$, then
\begin{equation}
    \int_0^{T_{max}}
    \|\omega(\cdot,t)\|_{L^\infty} \diff t
    =+\infty,
\end{equation}
and so it suffices to prove the bound \eqref{VortBound}, which immediately then implies the lower bound on $T_{max}$.
We will begin by defining $f$ as in Proposition \ref{VeloBoundProp}, letting
\begin{equation}
    f(t)=1+\frac{1}{R}\int_0^t\|u_r^+(\cdot,\tau)\|_{L^\infty}
    \diff\tau.
\end{equation}
Applying Proposition \ref{VortBoundProp}, we have for all $0\leq t<T_{max}$,
\begin{equation}
    \|\omega(\cdot,t)\|_{L^\infty}
    \leq 
    \max\left(
    \left\|\omega^0\right\|_{L^\infty},
    R^k\left\|\frac{\omega^0}{r^k}
    \right\|_{L^\infty}\right)f(t)^k,
\end{equation}
therefore, it suffices to show that
for all $0\leq t<T_{max}$,
\begin{equation}
    f(t)^k\leq 
    \frac{1}{(1-\alpha t)^\frac{2(d-2)}{d-4}}.
\end{equation}
Observing that $\alpha=\frac{d-4}{2}\mu$,
and applying Proposition \ref{VeloBoundProp},
we find that
\begin{equation}
    \frac{\diff f}{\diff t}
    \leq
    \frac{\alpha}{\frac{k}{2}-1} 
    f^\frac{k}{2},
\end{equation}
and that consequently
\begin{equation}
    \frac{\diff}{\diff t}
    \left(f^{1-\frac{k}{2}}\right)
    \geq
    -\alpha.
\end{equation}
Integrating this differential inequality and using the fact that $f(0)=1$ by definition, 
we find that for all $0\leq t<T_{max}$,
\begin{equation}
    f(t)^{1-\frac{k}{2}}\geq 1-\alpha t,
\end{equation}
and therefore
\begin{equation}
    f(t)^{\frac{1}{2}(d-4)}
    \leq 
    \frac{1}{1-\alpha t}.
\end{equation}
Finally, we observe that
for all $0\leq t<T_{max}$,
\begin{align}
    f(t)
    &\leq 
    \frac{1}{(1-\alpha t
    )^\frac{2}{d-4}} \\
    f(t)^k
    &\leq 
    \frac{1}{(1-\alpha t
    )^\frac{2(d-2)}{d-4}}.
\end{align}
This completes the proof.
\end{proof}

\begin{corollary} \label{LowerBoundCor}
Suppose $u\in C\left([0,T_{max}),H^s_{df}
\left(\mathbb{R}^d\right)\right)
\cap C^1\left([0,T_{max}),H^{s-1}_{df}
\left(\mathbb{R}^d\right)\right),
d\geq 5, s>2+\frac{d}{2}$ is an axisymmetric, swirl-free solution of the Euler equation,
with finite-time blowup at $T_{max}<+\infty$,
and that $\frac{\omega^0}{r^k}\in L^1\cap L^\infty$.
Then for all $R>0$ and for all 
$0\leq t<T_{max}$,
\begin{equation}
    \|\omega(\cdot,t)
    \|_{L^{1}(\mathcal{C}_R^c)}
    +R^k\left\|\frac{\omega^0}{r^k}
    \right\|_{L^1\left(\mathbb{R}^d\right)}
    \geq
    \frac{4R^2}{(d-4)^2 C_d^2
    \left\|\frac{\omega^0}{r^k}
    \right\|_{L^\infty\left(\mathbb{R}^d\right)}
    \left(T_{max}-t\right)^2}.
\end{equation}
Note in particular that this implies that
\begin{equation}
    \lim_{t\to T_{max}}
    \left(T_{max}-t\right)^2
    \|\omega(\cdot,t)\|_{L^{1}(\mathcal{C}_R^c)}
    \geq 
    \frac{4R^2}{(d-4)^2 C_d^2
    \left\|\frac{\omega^0}{r^k}
    \right\|_{L^\infty
    \left(\mathbb{R}^d\right)}}.
\end{equation}
\end{corollary}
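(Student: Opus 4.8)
The plan is to apply Theorem \ref{LocalExistenceEstimate} not from the initial time but from an arbitrary intermediate time $t \in [0, T_{max})$, treating $u(\cdot,t)$ as fresh initial data. Since the Euler equation is autonomous, the solution emanating from $u(\cdot,t)$ is exactly the time-shifted solution, so its maximal time of existence is $T_{max} - t$. Before invoking the theorem I would verify its hypotheses for the re-centered data: the $H^s_{df}$ regularity is immediate, and the requirement that $\frac{\omega(\cdot,t)}{r^k} \in L^{d,1} \cap L^\infty$ follows from Proposition \ref{AdvectedProp}, since $\frac{\omega}{r^k}$ is transported by the incompressible, hence volume-preserving, Lagrangian flow, so that all rearrangement-invariant norms—in particular $L^{d,1}$ and $L^\infty$—are conserved in time. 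This is the same conservation used in the proof of Proposition \ref{VeloBoundProp}.

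Applying the lower bound on the maximal existence time from Theorem \ref{LocalExistenceEstimate} to the time-shifted problem, for every $R > 0$ I obtain
\begin{equation}
    T_{max} - t \geq
    \frac{R}{(d-3) C_d \left(\left\|\omega(\cdot,t)\right\|_{L^{d,1}(\mathcal{C}_R^c)} + R^k \left\|\frac{\omega(\cdot,t)}{r^k}\right\|_{L^{d,1}(\mathcal{C}_R)}\right)}.
\end{equation}
The next step is to trade the cylinder norm of the advected quantity for the full-space norm of the initial data: restricting to $\mathcal{C}_R$ can only decrease the norm, so $\left\|\frac{\omega(\cdot,t)}{r^k}\right\|_{L^{d,1}(\mathcal{C}_R)} \leq \left\|\frac{\omega(\cdot,t)}{r^k}\right\|_{L^{d,1}(\mathbb{R}^d)}$, and the right-hand side equals $\left\|\frac{\omega^0}{r^k}\right\|_{L^{d,1}(\mathbb{R}^d)}$ by the conservation just noted. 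Enlarging the denominator only weakens the lower bound, which is the direction we need, and after cross-multiplying I arrive at
\begin{equation}
    \left\|\omega(\cdot,t)\right\|_{L^{d,1}(\mathcal{C}_R^c)} + R^k \left\|\frac{\omega^0}{r^k}\right\|_{L^{d,1}(\mathbb{R}^d)} \geq \frac{R}{(d-3) C_d (T_{max} - t)},
\end{equation}
which is precisely the claimed bound. The concluding statement on the minimal blowup rate then follows by multiplying through by $(T_{max} - t)$ and letting $t \to T_{max}$, since the finite, time-independent term $R^k \left\|\frac{\omega^0}{r^k}\right\|_{L^{d,1}(\mathbb{R}^d)}$ is annihilated in that limit.

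The only genuine subtlety, and the step I would be most careful about, is the time-translation argument itself: one must confirm that Theorem \ref{LocalExistenceEstimate} may legitimately be re-centered at time $t$ using the same geometric radius $R$. This rests on the autonomy of the Euler flow together with the exact conservation of $\left\|\frac{\omega}{r^k}\right\|_{L^{d,1}}$ and $\left\|\frac{\omega}{r^k}\right\|_{L^\infty}$ under the volume-preserving transport, which guarantees the hypotheses persist for all $t < T_{max}$. Everything else reduces to the triangle inequality and the monotonicity of the $L^{d,1}$ norm under restriction to subdomains.
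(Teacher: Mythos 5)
Your proposal is correct and follows essentially the same route as the paper's own proof: both apply Theorem \ref{LocalExistenceEstimate} at the re-centered time $t$, enlarge $\left\|\frac{\omega(\cdot,t)}{r^k}\right\|_{L^{d,1}(\mathcal{C}_R)}$ to the full-space norm, and use the conservation of $\left\|\frac{\omega(\cdot,t)}{r^k}\right\|_{L^{d,1}(\mathbb{R}^d)}$ under the volume-preserving flow to replace it with the initial-data norm. Your explicit verification of the hypotheses at time $t$ and of the limiting rate statement fills in details the paper leaves implicit, but the argument is the same.
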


\begin{proof}
Fix $R>0$.
Applying Theorem \ref{UpperBound5D+} to our solution at time $t$, we can see that for all $0\leq t<T_{max}$,
\begin{equation}
    T_{max}-t \geq 
    \frac{2R}
    {(d-4)C_d \left\|\frac{\omega(\cdot,t)}{r^k}
    \right\|_{L^\infty}^\frac{1}{2}
    \left(\left\|\omega
    \right(\cdot,t)\|_{L^{1}(\mathcal{C}_R^c)}
    +R^k\left\|\frac{\omega(\cdot,t)}{r^k}
    \right\|_{L^{1}(\mathcal{C}_R)}
    \right)^\frac{1}{2}}.
\end{equation}
Observe that for all $0\leq t<T_{max}$
\begin{equation}
    \left\|\frac{\omega(\cdot,t)}{r^k}
    \right\|_{L^\infty}
    =
    \left\|\frac{\omega^0}{r^k}
    \right\|_{L^\infty},
\end{equation}
and likewise
\begin{align}
    \left\|\frac{\omega(\cdot,t)}{r^k}
    \right\|_{L^{1}(\mathcal{C}_R)}
    &\leq
    \left\|\frac{\omega(\cdot,t)}{r^k}
    \right\|_{L^{1}\left(\mathbb{R}^d\right)} \\
    &=
    \left\|\frac{\omega^0}{r^k}
    \right\|_{L^{1}\left(\mathbb{R}^d\right)}.
\end{align}
Therefore we may conclude that for all 
$0\leq t<T_{max}$,
\begin{equation}
    T_{max}-t \geq 
    \frac{2R}
    {(d-4)C_d \left\|\frac{\omega^0}{r^k}
    \right\|_{L^\infty}^\frac{1}{2}
    \left(\left\|\omega
    \right(\cdot,t)\|_{L^{1}(\mathcal{C}_R^c)}
    +R^k\left\|\frac{\omega^0}{r^k}
    \right\|_{L^1}
    \right)^\frac{1}{2}}.
\end{equation}
Rearranging this inequality, we find that for all
$0\leq t<T_{max}$,
\begin{equation}
    \|\omega(\cdot,t)
    \|_{L^{1}(\mathcal{C}_R^c)}
    +R^k\left\|\frac{\omega^0}{r^k}
    \right\|_{L^1}
    \geq
    \frac{4R^2}{(d-4)^2 C_d^2
    \left\|\frac{\omega^0}{r^k}
    \right\|_{L^\infty}
    \left(T_{max}-t\right)^2},
\end{equation}
and this completes the proof.
\end{proof}

\begin{corollary} \label{SharpLowerBoundVort}
Suppose $u\in C\left([0,T_{max}),H^s_{df}
\left(\mathbb{R}^d\right)\right)
\cap C^1\left([0,T_{max}),H^{s-1}_{df}
\left(\mathbb{R}^d\right)\right),
d\geq 5, s>2+\frac{d}{2}$ is an axisymmetric, swirl-free solution of the Euler equation,
with finite-time blowup at $T_{max}<+\infty$,
and that $\frac{\omega^0}{r^k}\in L^1\cap L^\infty$.
Then for all $0\leq t<T_{max}$,
\begin{equation}
    \|\omega(\cdot,t)
    \|_{L^{1}\left(\mathbb{R}^d\right)}
    \geq 
    \left(\frac{M_d}
    {\left\|\frac{\omega^0}{r^k}
    \right\|_{L^\infty}^\frac{d-2}{d-4}
    \left\|\frac{\omega^0}{r^k}
    \right\|_{L^1}^\frac{2}{d-4}}\right)
    \frac{1}
    {\left(T_{max}-t\right)^{2\frac{d-2}{d-4}}},
\end{equation}
where
\begin{equation}
    M_d=\sup_{M>0}\left(\frac{4M^2}{(d-4)^2C_d^2}
    -M^k\right).
\end{equation}
\end{corollary}

\begin{proof}
For all $R>0$, we can see from Corollary \ref{LowerBoundCor}
\begin{align}
    \|\omega(\cdot,t)
    \|_{L^{1}\left(\mathbb{R}^d\right)}
    &\geq 
    \|\omega(\cdot,t)
    \|_{L^{1}(\mathcal{C}_R^c)} \\
    &\geq 
     \frac{4R^2}{(d-4)^2 C_d^2
    \left\|\frac{\omega^0}{r^k}
    \right\|_{L^\infty}
    \left(T_{max}-t\right)^2}
    -R^k\left\|\frac{\omega^0}{r^k}
    \right\|_{L^1}.
\end{align}
Now we will introduce the parameter
\begin{equation}
    M=\frac{R(T_{max}-t)^\frac{2}{d-4}}
    {\left\|\frac{\omega^0}{r^k}
    \right\|_{L^\infty}^\frac{1}{k-2}
    {\left\|\frac{\omega^0}{r^k}
    \right\|_{L^1}^\frac{1}{k-2}}}.
\end{equation}
Substituting in $M$, we can see that for all $M>0$
and for all $0\leq t<T_{max}$,
\begin{equation} \label{Mstep}
    \|\omega(\cdot,t)
    \|_{L^{1}\left(\mathbb{R}^d\right)}
    \geq 
    \left(\frac{4M^2}{(d-4)^2 C_d^2}
    -M^k\right)
    \left(\frac{1}{\left\|\frac{\omega^0}{r^k}
    \right\|_{L^\infty}^\frac{k}{k-2}
    \left\|\frac{\omega^0}{r^k}
    \right\|_{L^1}^\frac{2}{k-2}}\right)
    \frac{1}
    {\left(T_{max}-t\right)^{2\frac{d-2}{d-4}}}.
\end{equation}
We know that $k\geq 3$, and so we can observe that
\begin{equation}
    0<
    \sup_{M>0}
     \left(\frac{4M^2}{(d-4)^2 C_d^2}
    -M^k\right)
    <+\infty,
\end{equation}
and so taking the supremum over $M>0$ of \eqref{Mstep},
this completes the proof.
\end{proof}

\section{Anti-parallel vortex tubes} \label{AntiParallelTubesSection}

In this section, we will discuss the special case of colliding anti-parallel vortex tubes.
This scenario will involve vorticities which are odd with respect to $z$ and positive for $z>0$. We will begin by deriving a special form of the Biot-Savart law that applies to vorticities which are odd with respect to $z$.

\begin{proposition} \label{BiotSavartOdd}
Suppose $u\in H^s_{df}\left(\mathbb{R}^d\right), s> \frac{d}{2}, d\geq 3$, is axisymmetric and swirl-free, and that $\omega(r,z)$ is odd in $z$, with for all $r\geq 0, z\in\mathbb{R}$,
\begin{equation}
    \omega(r,-z)=-\omega(r,z)
\end{equation}
Then the velocity can be recovered from the vorticity in the upper half plane using the following formulas:
\begin{equation} \label{URkernel}
    u_r(r,z)= \frac{d-2}{2\pi}
    \int_0^\infty \int_0^\infty
    H(r,z,\bar{r},\bar{z}) \bar{r}^{d-2}
    \omega(\bar{r},\bar{z})
    \diff\bar{r} \diff \bar{z},
\end{equation}
where
\begin{multline}
    H(r,z,\bar{r},\bar{z})= \int_0^1
    \Bigg(
    \frac{z+\bar{z}}
    {\left(r^2+\bar{r}^2-2r\bar{r}\tau
    +(z+\bar{z})^2\right)^\frac{d}{2}} 
    +\frac{z-\bar{z}}
    {\left(r^2+\bar{r}^2+2r\bar{r}\tau
    +(z-\bar{z})^2\right)^\frac{d}{2}}  \\
    -\frac{z+\bar{z}}
    {\left(r^2+\bar{r}^2+2r\bar{r}\tau
    +(z+\bar{z})^2\right)^\frac{d}{2}} 
    -\frac{z-\bar{z}}
    {\left(r^2+\bar{r}^2-2r\bar{r}\tau
    +(z-\bar{z})^2\right)^\frac{d}{2}}
    \Bigg)
    \tau \left(1-\tau^2\right)^\frac{d-4}{2}
    \diff\tau,
\end{multline}
and
\begin{equation} \label{UZkernel}
    u_z(r,z)=-\frac{d-2}{2\pi}
    \int_0^\infty \int_0^\infty
    G(r,z,\bar{r},\bar{z})\bar{r}^{d-2}\omega(\bar{r},\bar{z})
    \diff\bar{r} \diff \bar{z},
\end{equation}
where
\begin{multline}
    G(r,z,\bar{r},\bar{z})= \int_0^1
    \Bigg(
    \frac{\bar{r}-r\tau}
    {\left(r^2+\bar{r}^2-2r\bar{r}\tau 
    +(z-\bar{z})^2\right)^\frac{d}{2}}
    +\frac{\bar{r}+r\tau}
    {\left(r^2+\bar{r}^2+2r\bar{r}\tau 
    +(z-\bar{z})^2\right)^\frac{d}{2}} \\
    -\frac{\bar{r}-r\tau}
    {\left(r^2+\bar{r}^2-2r\bar{r}\tau
    +(z+\bar{z})^2\right)^\frac{d}{2}}
    -\frac{\bar{r}+r\tau}
    {\left(r^2+\bar{r}^2+2r\bar{r}\tau
    +(z+\bar{z})^2\right)^\frac{d}{2}}
    \Bigg)
    \left(1-\tau^2\right)^\frac{d-4}{2}
    \diff\tau.
\end{multline}
Further note that $u_r(r,z)$ is even in $z$, while $u_z(r,z)$ is odd in $z$, and for all $r,z>0,$
\begin{align}
    u_r(0,z)&=0 \\
    u_z(r,0)&= 0.
\end{align}
\end{proposition}

\begin{proof}
We begin by recalling the Biot-Savart law in Proposition \ref{BiotSavartCoorindate1}
\begin{multline}
    u_r(r,z)=-\frac{d-2}{2\pi}
    \int_{-\infty}^\infty \int_0^\infty 
    \bar{r}^{d-2} (z-\bar{z}) \omega(\bar{r},\bar{z})  \\
    \int_{-1}^1 \frac{\Tilde{y}_1
    (1-\Tilde{y}_1^2)^\frac{d-4}{2}}
    {\left(r^2+\bar{r}^2-2r\bar{r} \Tilde{y}_1 
    +(z-\bar{z})^2\right)^\frac{d}{2}} 
    \diff\Tilde{y}_1 \diff\bar{r} \diff \bar{z}.
\end{multline}
Using the odd symmetry of $\omega$ in $\bar{z}$,
and the change of variables $\Tilde{z}=-\bar{z}$,
\begin{multline}
    -\int_{-\infty}^0 \int_0^\infty 
    \bar{r}^{d-2} (z-\bar{z}) \omega(\bar{r},\bar{z})  
    \int_{-1}^1 \frac{\Tilde{y}_1
    (1-\Tilde{y}_1^2)^\frac{d-4}{2}}
    {\left(r^2+\bar{r}^2-2r\bar{r} \Tilde{y}_1 
    +(z-\bar{z})^2\right)^\frac{d}{2}} 
    \diff\Tilde{y}_1 \diff\bar{r} \diff \bar{z}
    = \\
    \int_{0}^\infty \int_0^\infty 
    \bar{r}^{d-2} (z+\Tilde{z}) 
    \omega(\bar{r},\Tilde{z})  
    \int_{-1}^1 \frac{\Tilde{y}_1
    (1-\Tilde{y}_1^2)^\frac{d-4}{2}}
    {\left(r^2+\bar{r}^2-2r\bar{r} \Tilde{y}_1 
    +(z+\Tilde{z})^2\right)^\frac{d}{2}} 
    \diff\Tilde{y}_1 
    \diff\bar{r} \diff \Tilde{z}
\end{multline}
Therefore, the identity for $u_r$ can be rewritten as
\begin{multline}
    u_r(r,z)
    =
    \frac{d-2}{2\pi}
    \int_0^\infty\int_0^\infty
    \bar{r}^{d-2} \omega(\bar{r},\bar{z})
    \int_{-1}^1 \\ 
    \left(
    \frac{(z+\bar{z})\Tilde{y}_1
    (1-\Tilde{y}_1^2)^\frac{d-4}{2}}
    {\left(r^2+\bar{r}^2-2r\bar{r} \Tilde{y}_1 
    +(z+\bar{z})^2\right)^\frac{d}{2}} 
    -
    \frac{(z-\bar{z})\Tilde{y}_1
    (1-\Tilde{y}_1^2)^\frac{d-4}{2}}
    {\left(r^2+\bar{r}^2-2r\bar{r} \Tilde{y}_1 
    +(z-\bar{z})^2\right)^\frac{d}{2}} 
    \right)\diff\Tilde{y}_1 
    \diff\bar{r} \diff\bar{z}.
\end{multline}
Making the substitution $\tau=-\Tilde{y_1}$,
for $-1\leq \Tilde{y}\leq 0$,
we can see that 
\begin{multline}
    \int_{-1}^0 \left(
    \frac{(z+\bar{z})\Tilde{y}_1
    (1-\Tilde{y}_1^2)^\frac{d-4}{2}}
    {\left(r^2+\bar{r}^2-2r\bar{r} \Tilde{y}_1 
    +(z+\bar{z})^2\right)^\frac{d}{2}} 
    -
    \frac{(z-\bar{z})\Tilde{y}_1
    (1-\Tilde{y}_1^2)^\frac{d-4}{2}}
    {\left(r^2+\bar{r}^2-2r\bar{r} \Tilde{y}_1 
    +(z-\bar{z})^2\right)^\frac{d}{2}} 
    \right)\diff\Tilde{y}_1
    = \\
    \int_0^1 \left(
    -\frac{(z+\bar{z})\tau
    \left(1-\tau^2\right)^\frac{d-2}{4}}
    {\left(r^2+\bar{r}^2+2r\bar{r}\tau
    +(z+\bar{z})^2\right)\frac{d}{2}}
    +\frac{(z-\bar{z})\tau
    \left(1-\tau^2\right)^\frac{d-2}{4}}
    {\left(r^2+\bar{r}^2+2r\bar{r}\tau
    +(z-\bar{z})^2\right)\frac{d}{2}}
    \right) \diff\tau.
\end{multline}
Making the trivial substitution
$\tau=\Tilde{y}$ for
$0\leq\Tilde{y}\leq 1$, 
this completes the proof of the identity \eqref{URkernel}.

We proceed similarly for $u_z$, starting with the identity from Proposition \ref{BiotSavartCoorindate1},
\begin{equation*}
    u_z(r,z)=\frac{d-2}{2\pi}
    \int_{-\infty}^\infty \int_0^\infty 
    \bar{r}^{d-2} \omega(\bar{r},\bar{z})  
    \int_{-1}^1 \frac{(r \Tilde{y}_1-\bar{r})
    (1-\Tilde{y}_1^2)^\frac{d-4}{2}}
    {\left(r^2+\bar{r}^2-2r\bar{r} \Tilde{y}_1 
    +(z-\bar{z})^2\right)^\frac{d}{2}} 
    \diff\Tilde{y}_1 \diff\bar{r} \diff\bar{z}.
\end{equation*}
Again applying the odd symmetry of $\omega$ in $z$, we find that
\begin{multline}
    u_z(r,z)=-\frac{d-2}{2\pi}
    \int_0^\infty \int_0^\infty 
    \bar{r}^{d-2} \omega(\bar{r},\bar{z})  
    \int_{-1}^1 \\
    \left(\frac{(\bar{r}- r\Tilde{y}_1)
    (1-\Tilde{y}_1^2)^\frac{d-4}{2}}
    {\left(r^2+\bar{r}^2-2r\bar{r} \Tilde{y}_1 
    +(z-\bar{z})^2\right)^\frac{d}{2}}
    -\frac{(\bar{r}- r\Tilde{y}_1)
    (1-\Tilde{y}_1^2)^\frac{d-4}{2}}
    {\left(r^2+\bar{r}^2-2r\bar{r} \Tilde{y}_1 
    +(z+\bar{z})^2\right)^\frac{d}{2}}
    \right)
    \diff\Tilde{y}_1 \diff\bar{r} \diff\bar{z}.
\end{multline}
Again taking the substitution $\tau=-\Tilde{y}_1$, we find that
\begin{multline}
    \int_{-1}^0
    \left(\frac{(\bar{r}- r\Tilde{y}_1)
    (1-\Tilde{y}_1^2)^\frac{d-4}{2}}
    {\left(r^2+\bar{r}^2-2r\bar{r} \Tilde{y}_1 
    +(z-\bar{z})^2\right)^\frac{d}{2}}
    -\frac{(\bar{r}- r\Tilde{y}_1)
    (1-\Tilde{y}_1^2)^\frac{d-4}{2}}
    {\left(r^2+\bar{r}^2-2r\bar{r} \Tilde{y}_1 
    +(z+\bar{z})^2\right)^\frac{d}{2}}
    \right)\diff\Tilde{y}_1= \\
    \int_0^1
    \left(\frac{(\bar{r}+ r\tau)
    (1-\tau^2)^\frac{d-4}{2}}
    {\left(r^2+\bar{r}^2+2r\bar{r}\tau 
    +(z-\bar{z})^2\right)^\frac{d}{2}}
    -\frac{(\bar{r}+ r\tau)
    (1-\tau^2)^\frac{d-4}{2}}
    {\left(r^2+\bar{r}^2+2r\bar{r} \tau 
    +(z+\bar{z})^2\right)^\frac{d}{2}}
    \right)\diff\tau.
\end{multline}
Making the trivial substitution $\tau=\Tilde{y}_1$ for 
$0\leq\Tilde{y}_1\leq 1$,
this completes the proof of the identity \eqref{UZkernel}.

Finally, we observe that 
\begin{equation}
    H(r,-z,\bar{r},\bar{z})=H(r,z,\bar{r},\bar{z}),
\end{equation}
which implies that
\begin{equation}
    u_r(r,z)=u_r(r,-z).
\end{equation}
Likewise, we can observe that
\begin{equation}
    G(r,-z,\bar{r},\bar{z})=-G(r,z,\bar{r},\bar{z}),
\end{equation}
which implies that
\begin{equation}
    u_z(r,-z)=-u_z(r,z).
\end{equation}
This completes the proof.
\end{proof}

It is clear from the analysis in Section \ref{RegCritSection}, that blowup for solutions of the axisymmetric, swirl-free Euler equation in four and higher dimensions requires compression along the $z$-axis and stretching in the $r$-hyperplane. We will show that this is precisely what is afforded to us by vorticities which are odd in $z$, and nonnegative for $z>0$,
giving hyperbolic flow in the neighbourhood of a stagnation point at the origin.

\begin{proposition} \label{AxisVelocity}
Suppose $u\in H^s_{df}\left(\mathbb{R}^d\right), 
s>1+\frac{d}{2}, d\geq 3$, 
is axisymmetric and swirl-free. Further suppose that $\omega$ is odd in $z$, that for all $r,z>0, \omega(r,z)\geq 0$, and that $\omega$ is not identically zero.
Then for all $z>0$,
\begin{equation}
    u_z(0,z)<0,
\end{equation}
and for all $r>0$,
\begin{equation}
    u_r(r,0)>0.
\end{equation}
\end{proposition}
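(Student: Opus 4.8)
The plan is to read off both sign statements directly from the odd-symmetry Biot--Savart kernels $H$ and $G$ of Proposition \ref{BiotSavartOdd}, by evaluating them on the axis $r=0$ and on the midplane $z=0$ respectively, and then checking that the resulting integrands are pointwise nonnegative and strictly positive on the open quadrant. Since the prefactor $\alpha_d m_{d-2} m_{d-3}$ is strictly positive, the sign of each velocity component is controlled entirely by the sign of its kernel integrated against the nonnegative weight $\rho^{d-2}\omega(\rho,s)$.

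For the first claim I would set $r=0$ in the formula \eqref{UZkernel} for $u_z$. At $r=0$ the two pairs of $\tau$-dependent denominators in $G$ coincide, and after carrying out the harmless (positive) $\tau$-integral $\int_0^1(1-\tau^2)^{(d-4)/2}\diff\tau$, the kernel reduces to a positive multiple of
\begin{equation*}
    \frac{1}{\left(\rho^2+(z-s)^2\right)^{d/2}}
    -\frac{1}{\left(\rho^2+(z+s)^2\right)^{d/2}}.
\end{equation*}
For $z,s>0$ one has $(z-s)^2<(z+s)^2$, so this difference is strictly positive; hence $G(0,z,\rho,s)>0$ for all $\rho,s>0$. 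Because $\omega\ge 0$ on the quadrant, is not identically zero, and is continuous (by Sobolev embedding $u\in C^1$, so $\omega\in C^0$), the double integral against the strictly positive kernel is strictly positive, and the overall minus sign in \eqref{UZkernel} then gives $u_z(0,z)<0$.

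The second claim proceeds identically, now setting $z=0$ in the formula \eqref{URkernel} for $u_r$. At $z=0$ we have $z\pm s=\pm s$ and $(z\pm s)^2=s^2$, so the four terms of $H$ pair up and $H(r,0,\rho,s)$ becomes a positive multiple of
\begin{equation*}
    \int_0^1 \tau\left(1-\tau^2\right)^{\frac{d-4}{2}}
    \left(
    \frac{1}{\left(r^2+\rho^2-2r\rho\tau+s^2\right)^{d/2}}
    -\frac{1}{\left(r^2+\rho^2+2r\rho\tau+s^2\right)^{d/2}}
    \right)\diff\tau .
\end{equation*}
For $r,\rho>0$ and $\tau\in(0,1)$ the inequality $-2r\rho\tau<2r\rho\tau$ makes the bracket strictly positive, while the weight $\tau(1-\tau^2)^{(d-4)/2}$ is nonnegative on $(0,1)$; thus $H(r,0,\rho,s)>0$ for all $r,\rho,s>0$. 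The same nonnegativity-and-not-identically-zero argument for $\omega$, together with the positive prefactor in \eqref{URkernel}, yields $u_r(r,0)>0$.

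Once the kernels of Proposition \ref{BiotSavartOdd} are in hand, every step is elementary and there is no serious analytic obstacle; the only point requiring a little care is upgrading ``the kernel is strictly positive and $\omega\ge 0$'' to a \emph{strict} inequality for the integral. For this I would use continuity of $\omega$ to produce an open subset of the quadrant on which $\omega>0$, which guarantees that the integral of a strictly positive kernel against $\rho^{d-2}\omega$ cannot vanish.
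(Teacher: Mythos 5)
Your proposal is correct and follows essentially the same route as the paper: evaluate the odd-symmetry Biot--Savart kernels of Proposition \ref{BiotSavartOdd} at $r=0$ and $z=0$, observe that the paired denominators make each reduced kernel strictly positive on the open quadrant, and integrate against the nonnegative, not-identically-zero weight $\rho^{d-2}\omega$. Your explicit remark on upgrading the nonnegativity of $\omega$ to strict positivity of the integral via continuity is a point the paper leaves implicit, but it is the same argument.
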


\begin{proof}
Taking the identity \eqref{UZkernel} from Proposition \ref{BiotSavartOdd}, we see that
\begin{multline}
    u_z(0,z)=-\frac{d-2}{\pi}
    \int_0^\infty \int_0^\infty 
    \bar{r}^{d-1} \omega(\bar{r},\bar{z})  
    \int_0^1
    (1-\tau^2)^\frac{d-4}{2}
    \\
    \left(\frac{1}
    {\left(\bar{r}^2 
    +(z-\bar{z})^2\right)^\frac{d}{2}}
    -\frac{1}
    {\left(\bar{r}^2
    +(z+\bar{z})^2\right)^\frac{d}{2}}\right)
    \diff\tau \diff\bar{r} \diff\bar{z}.
\end{multline}
Observe that for all $\bar{r},\bar{z},z>0$,
\begin{equation}
    \int_0^1
    (1-\tau^2)^\frac{d-4}{2}
    \\
    \left(\frac{1}
    {\left(\bar{r}^2 
    +(z-\bar{z})^2\right)^\frac{d}{2}}
    -\frac{1}
    {\left(\bar{r}^2
    +(z+\bar{z})^2\right)^\frac{d}{2}}\right)
    \diff\tau>0.
\end{equation}
This implies that for all $z>0$,
\begin{equation}
    u_z(0,z)<0.
\end{equation}

Likewise, applying the identity \eqref{URkernel} from Proposition \ref{BiotSavartOdd},
we can see that for all $r>0$,
\begin{multline}
    u_r(r,0)=
    \frac{d-2}{\pi}
    \int_0^\infty\int_0^\infty
    \bar{z}\bar{r}^{d-2} \omega(\bar{r},\bar{z})
    \int_0^1 
    \tau
    (1-\tau^2)^\frac{d-4}{2}\\ 
    \left(
    \frac{1}
    {\left(r^2+\bar{r}^2-2r\bar{r}\tau 
    +\bar{z}^2\right)^\frac{d}{2}} 
    -
    \frac{1}
    {\left(r^2+\bar{r}^2+2r\bar{r}\tau 
    +\bar{z}^2\right)^\frac{d}{2}}\right)
    \diff\tau \diff\bar{r} \diff\bar{z}.
\end{multline}
Observe that for all $\bar{r},\bar{z},r>0$,
\begin{equation}
    \int_0^1 
    \tau
    (1-\tau^2)^\frac{d-4}{2}\\ 
    \left(
    \frac{1}
    {\left(r^2+\bar{r}^2-2r\bar{r}\tau 
    +\bar{z}^2\right)^\frac{d}{2}} 
    -
    \frac{1}
    {\left(r^2+\bar{r}^2+2r\bar{r} \tau 
    +\bar{z}^2\right)^\frac{d}{2}} 
    \right)\diff\tau >0,
\end{equation}
and so for all $r>0$,
\begin{equation}
    u_r(r,0)>0.
\end{equation}
\end{proof}

\begin{proposition} \label{MirrorSymProp}
Suppose $u\in C\left([0,T_{max});
H^s_* \left(\mathbb{R}^d\right)\right)
\cap C^1\left([0,T_{max}),H^{s-1}_*
\left(\mathbb{R}^d\right)\right), 
d\geq 3, s>2+\frac{d}{2}$, is a solution of the Euler equation. Then the velocity $\Tilde{u} \in C\left([0,T_{max});
H^s_* \left(\mathbb{R}^d\right)\right)
\cap C^1\left([0,T_{max}),H^{s-1}_*
\left(\mathbb{R}^d\right)\right)$ is also a solution of the Euler equation, where
\begin{align}
    \Tilde{u}_r(r,z,t)&=u_r(r,-z,t) \\
    \Tilde{u}_z(r,z,t)&=-u_z(r,-z,t) \\
    \Tilde{\omega}(r,z,t)&=-\omega(r,-z,t).
\end{align}
\end{proposition}

\begin{proof}
First, we observe that the new velocity field is divergence free, with
    \begin{align}
        \nabla \cdot \tilde{u}(x)
        &=
        \partial_r\Tilde{u}_r(r,z,t)
        +\partial_z\Tilde{u}_z(r,z)
        +\frac{k}{r}\Tilde{u}_{r}(r,z,t) \\
        &=
        \left(\partial_r u_r+\partial_z u_z
        + \frac{k}{r}u_r\right)(r,-z,t) \\
        &=
        0.
    \end{align}
Likewise, taking the scalar curl, we confirm that
\begin{align}
    \partial_r \Tilde{u}_z(r,z)
    -\partial_z \Tilde{u}_z(r,z)
    &=
    -\partial_r u_z(r,-z)+\partial_z u_r(r,-z) \\
    &=
    -\omega(r,-z) \\
    &=\Tilde{\omega}(r,z),
\end{align}
and so we have confirmed $\Tilde{\omega}$ is the scalar vorticity associated with $\Tilde{u}$.
Finally, we compute that
\begin{align*}
    \left(\partial_t\tilde{\omega}
    +\Tilde{u}_r\partial_r\omega
    +\Tilde{u}_z\partial_z\omega
    +\frac{k}{r}\Tilde{u_r}\Tilde{\omega}
    \right)(r,z,t)
    &=
    \left(-\partial_t\tilde{\omega}
    -\Tilde{u}_r\partial_r\omega
    -\Tilde{u}_z\partial_z\omega
    -\frac{k}{r}\Tilde{u_r}\Tilde{\omega}
    \right)(r,-z,t) \\
    &=
    0.
\end{align*}
\end{proof}

\begin{theorem} \label{GeometryPreservedThm}
Suppose $u\in C\left([0,T_{max});
H^s_* \left(\mathbb{R}^d\right)\right)
\cap C^1\left([0,T_{max}),H^{s-1}_*
\left(\mathbb{R}^d\right)\right), 
d\geq 3, s>2+\frac{d}{2}$, is a solution of the Euler equation and that $\omega^0(r,z)$ is odd in $z$.
Then for all $0\leq t<T_{max},$ the vorticity $\omega(r,z,t)$ is odd in $z$ and 
\begin{equation}
    \int_0^\infty\int_0^\infty
    \omega(r,z,t)\diff r \diff z
    =
    \int_0^\infty\int_0^\infty
    \omega^0(r,z)\diff r \diff z
\end{equation}
Furthermore, if for all $r,z>0$
\begin{equation}
    \omega^0(r,z)\geq 0,
\end{equation}
then for all $0\leq t<T_{max}, r,z>0$
\begin{equation}
    \omega(r,z,t)\geq 0.
\end{equation}
\end{theorem}

\begin{proof}
We begin by defining $\Tilde{u}$ and $\Tilde{\omega}$ as in Proposition \ref{MirrorSymProp}. We know that $\Tilde{u}$ must also be a solution of the Euler equation and because $\omega^0(r,z)=-\omega^0(r,-z)$,
we can see that $\omega^0=\Tilde{\omega}^0$.
By uniqueness this implies that 
for all $0\leq t<T_{max}$,
\begin{equation}
    \omega(r,z,t)=\Tilde{\omega}(r,z,t)
    =-\omega(r,-z,t),
\end{equation}
and so we can conclude that $\omega$ being odd in $z$ is preserved by the dynamics. It then follows from Proposition \ref{BiotSavartOdd}, that $u_r(r,z)$ is even in $z$ and $u_z(r,z)$ is odd in $z$.

To prove the conservation of vorticity mass, we will consider the divergence form of the vorticity equation, $\partial_t\omega 
+\partial_r\left(\omega u_r\right)+
\partial_z \left(\omega u_z\right)=0$, computing that
\begin{multline}
    \frac{\diff}{\diff t}
    \int_0^\infty \int_0^\infty
    \omega(r,z,t) \diff r \diff z
    =
    -\int_0^\infty \int_0^\infty \partial_r(\omega(r,z,t)u_r(r,z,t))
    \diff r \diff z \\
    -\int_0^\infty \int_0^\infty 
    \partial_z(\omega(r,z,t)u_z(r,z,t))
    \diff z \diff r,
\end{multline}
and so
\begin{align*}
    \frac{\diff}{\diff t}
    \int_0^\infty \int_0^\infty
    \omega(r,z,t) \diff r \diff z
    &=
    \int_0^\infty \omega(0,z,t) u_r(0,z,t) \diff z
    +\int_0^\infty \omega(r,0,t)u_z(r,0,t) \diff r \\
    &=0,
\end{align*}
where we have used the fact that $\omega u_r$ and $\omega u_z$ decay at infinity, and that 
\begin{align}
    \omega(0,z,t)&=0 \\
    \omega(r,0,t) &=0,
\end{align}
due to regularity and oddness respectively.

Finally, we will observe that for all $r\in\mathbb{R}$,
\begin{equation}
    u_z(r,0)=0,
\end{equation}
because $u_z$ is odd in $z$. This implies stream lines cannot cross the plane $z=0$, and that therefore the set $\left\{(r,z):r,z>0\right\}$ is preserved by the flow map. We know that $\frac{\omega}{r^k}$ is transported by the velocity, so if for all $r,z>0$,
\begin{equation}
    \frac{\omega^0}{r^k}(r,z)\geq 0,
\end{equation}
then for all $0\leq t<T_{max}, r,z>0$,
\begin{equation}
    \frac{\omega}{r^k}(r,z,t)\geq 0.
\end{equation}
This completes the proof.
\end{proof}

We have now shown that in the geometric setting of odd scalar vorticities, non-negative when $z>0$, we can view $\omega(r,z,t)\diff r\diff z$ as a finite measure on $[0,+\infty]^2$ with constant total measure. Up to rescaling $\omega(r,z,t)\diff r\diff z$ is a probability measure. Therefore, it makes sense to consider the regularity problem in terms of the moments. We will show that a certain radial moment is increasing, and a certain vertical moment is decreasing.

\begin{theorem}
    Suppose $u\in C\left([0,T_{max});
H^s_* \left(\mathbb{R}^d\right)\right)
\cap C^1\left([0,T_{max}),H^{s-1}_*
\left(\mathbb{R}^d\right)\right), 
d\geq 3, s>2+\frac{d}{2}$, is a solution of the Euler equation and that $\omega(r,z,t)$ is odd in $z$, with for all $r,z>0$
\begin{equation}
    \omega(r,z,t)\geq 0.
\end{equation}
Then for all $0\leq t<T_{max}$,
\begin{equation*}
    \frac{\diff}{\diff t}
    \int_0^\infty \int_0^\infty
    z\omega(r,z,t) \diff r\diff z
    =
    -\frac{1}{2}\int_0^\infty
    u_z(0,z)^2 \diff z
    -k\int_0^\infty\int_0^\infty
    \frac{1}{r}u_r(r,z)^2\diff r\diff z.
\end{equation*}
\end{theorem}

\begin{proof}
    We will use the divergence form of the vorticity equation, 
    \begin{equation}
    \partial_t\omega+\partial_r(\omega u_r)
    +\partial_z(\omega u_z)=0.
    \end{equation}
    We can then compute that
    \begin{align}
    \frac{\diff}{\diff t}
    \int_0^\infty\int_0^\infty
    z\omega(r,z,t)\diff r \diff z
    &=
    -\int_0^\infty\int_0^\infty
    z \left(\partial_r(\omega u_r)
    +\partial_z(\omega u_z)\right) \\
    &=\int_0^\infty\int_0^\infty
    u_z \omega \diff r\diff z \\
    &= \int_0^\infty\int_0^\infty
    u_z (\partial_r u_z -\partial_z u_r) \diff r\diff z,
    \end{align}
    by integration by parts.
    Now calculate that
    \begin{align}
    \int_0^\infty\int_0^\infty
    u_z \partial_r u_z\diff r\diff z
    &=
    \int_0^\infty\int_0^\infty
    \frac{1}{2} \partial_r u_z^2 
    \diff r\diff z \\
    &=
    -\int_0^\infty u_z(0,z)^2\diff z.
    \end{align}
    Integrating by parts and applying the divergence free constraint, we find that
    \begin{align}
    -\int_0^\infty\int_0^\infty
    u_z  \partial_z u_r \diff r\diff z
    &=
    \int_0^\infty\int_0^\infty
    \partial_z u_z  u_r \diff r\diff z \\
    &=
    -k\int_0^\infty\int_0^\infty
      \frac{1}{r}u_r^2 \diff r\diff z 
      -\int_0^\infty\int_0^\infty
      u_r\partial_ru_r
      \diff r \diff z.
    \end{align}
    Finally observe that
    \begin{align}
    -\int_0^\infty\int_0^\infty
      u_r\partial_ru_r
      \diff r \diff z
      &=
      -\frac{1}{2}\int_0^\infty\int_0^\infty
      \partial_r u_r^2
      \diff r \diff z \\
      &=
      \int_0^\infty u_r(0,z)^2 \diff z \\
      &=0.
    \end{align}
\end{proof}

\begin{theorem}
    Suppose $u\in C\left([0,T_{max});
H^s_* \left(\mathbb{R}^d\right)\right)
\cap C^1\left([0,T_{max}),H^{s-1}_*
\left(\mathbb{R}^d\right)\right), 
d\geq 3, s>2+\frac{d}{2}$, is a solution of the Euler equation and that $\omega(r,z,t)$ is odd in $z$, with for all $r,z>0$
\begin{equation}
    \omega(r,z,t)\geq 0.
\end{equation}
Then for all $0\leq t<T_{max}$,
\begin{equation}
    \frac{\diff}{\diff t}
    \int_0^\infty \int_0^\infty
    r^{d-1}\omega(r,z,t) \diff r\diff z
    =
    \frac{d-1}{2}\int_0^\infty
    r^{d-2}u_r(r,0)^2 \diff r.
\end{equation}
\end{theorem}

\begin{proof}
    Following the same approach as above, and integrating by parts, we compute that
    \begin{align*}
    \frac{\diff}{\diff t}
    \int_0^\infty\int_0^\infty
    r^{d-1}\omega(r,z,t)\diff r \diff z
    &=
    -\int_0^\infty\int_0^\infty
    r^{d-1} \left(\partial_r(\omega u_r)
    +\partial_z(\omega u_z)\right) \\
    &=(d-1)\int_0^\infty\int_0^\infty
    r^{d-2 }u_r \omega \diff r\diff z \\
    &= (d-1)\int_0^\infty\int_0^\infty
    r^{d-2} u_r (\partial_r u_z -\partial_z u_r) \diff r\diff z.
    \end{align*}
    Next observe that
    \begin{equation}
    -\int_0^\infty\int_0^\infty
    r^{d-2}u_r\partial_z u_r \diff r \diff z
    =
    \frac{1}{2}\int_0^\infty 
    r^{d-2} u_r(r,0)^2\diff r.
    \end{equation}
    Using the form of the divergence free constraint
    \begin{equation}
    \partial_r \left(r^{d-2}u_r\right)
    +\partial_z \left(r^{d-2}u_z\right)
    =0,
    \end{equation}
    we can compute that
    \begin{align}
    \int_0^\infty\int_0^\infty
    r^{d-2} u_r \partial_r u_z \diff r \diff z
    &=
    -\int_0^\infty\int_0^\infty
    u_z \partial_r \left(r^{d-2}u_r\right)
    \diff r \diff z \\
    &=\int_0^\infty\int_0^\infty
    u_z \partial_z \left(r^{d-2}u_z\right) 
    \diff r \diff z \\
    &=
    -\frac{1}{2}\int_0^\infty 
    r^{d-2} u_z(r,0)^2\diff r \\
    &=
    0.
    \end{align}
    This completes the proof.
    \end{proof}

We will now consider a moment involving both $r$ and $z$ whose time derivative does not have a sign, but does relate in an interesting way to the partition of the energy between the radial and vertical directions.

\begin{proposition} \label{EnergyMomentProp}
    Suppose $u\in C\left([0,T_{max});
H^s_* \left(\mathbb{R}^d\right)\right)
\cap C^1\left([0,T_{max}),H^{s-1}_*
\left(\mathbb{R}^d\right)\right), 
d\geq 3, s>2+\frac{d}{2}$, is a solution of the Euler equation and that $\omega(r,z,t)$ is odd in $z$, with for all $r,z>0$
\begin{equation}
    \omega(r,z,t)\geq 0.
\end{equation}
Then for all $0\leq t<T_{max}$,
\begin{multline}
    \frac{\diff}{\diff t}
    \int_0^\infty \int_0^\infty
    r^{d-1} z\omega(r,z,t) \diff r\diff z
    =
    -(d-1)\int_0^\infty\int_0^\infty
    r^{d-2}u_z(r,z,t)^2\diff r \diff z \\
    +\int_0^\infty\int_0^\infty
    r^{d-2}u_r(r,z,t)^2\diff r \diff z.
\end{multline}
This can also be expressed as
\begin{equation}
    \frac{\diff}{\diff t}
    \int_0^\infty \int_0^\infty
    r^{d-1} z\omega(r,z,t) \diff r\diff z
    =
    -d K_z(t)
    +K_0,
\end{equation}
where
\begin{align}
    K_z(t)&= \int_0^\infty\int_0^\infty
    r^{d-2}u_z(r,z,t)^2\diff r \diff z \\
    K_0 &= \int_0^\infty\int_0^\infty
    r^{d-2}u^0_z(r,z)^2\diff r \diff z
    +\int_0^\infty\int_0^\infty
    r^{d-2}u^0_r(r,z)^2\diff r \diff z.
\end{align}
\end{proposition}

\begin{proof}
    Again starting with the divergence form of the vorticity equation and integrating by parts, we compute that
    \begin{align*}
    \frac{\diff}{\diff t}
    \int_0^\infty \int_0^\infty
    r^{d-1}z \omega(r,z,t) \diff r\diff z
    &=
    -\int_0^\infty \int_0^\infty
    r^{d-1}z \left(\partial_r(\omega u_r)
    +\partial_z (\omega u_z)\right)
    \diff r \diff z \\
    &=
    \int_0^\infty \int_0^\infty
    \left((d-1)r^{d-2}z \omega u_r 
    +r^{d-1} \omega u_z\right) \diff r \diff z.
    \end{align*}
    We will deal with these two terms separately.
    Again integrating by parts, we find that
    \begin{align*}
    \int_0^\infty \int_0^\infty
    r^{d-2}z \omega u_r  \diff r \diff z
    &=
    \int_0^\infty \int_0^\infty
    r^{d-2}z u_r (\partial_r u_z-\partial_z u_r)
    \diff r \diff z \\
    &=
    \int_0^\infty \int_0^\infty
    \left(-zu_z \partial_r\left(r^{d-2}u_r\right)
    -\frac{1}{2}r^{d-2}z \partial_z u_r^2 \right)
    \diff r \diff z\\
    &=
    \int_0^\infty \int_0^\infty
    \left(z u_z \partial_z\left(r^{d-2}u_z\right)
    +\frac{1}{2}r^{d-2}u_r^2 \right)
    \diff r \diff z \\
    &=
    \frac{1}{2}\int_0^\infty \int_0^\infty 
    \left(-r^{d-2} u_z^2
    +r^{d-2} u_r^2 \right) 
    \diff r \diff z.
    \end{align*}
    Next we compute that
    \begin{align*}
    \int_0^\infty\int_0^\infty 
    r^{d-1} \omega u_z \diff r \diff z
    &=
    \int_0^\infty\int_0^\infty 
    r^{d-1} (\partial_ru_z-\partial_z u_r) u_z
    \diff r \diff z \\
    &=
    -\frac{d-1}{2} \int_0^\infty\int_0^\infty 
    r^{d-2}u_z^2 \diff r \diff z
    +\int_0^\infty\int_0^\infty 
    r^{d-1} u_r \partial_z u_z \diff r \diff z.
    \end{align*}
    
    Putting these estimates together we find that
    \begin{multline} \label{EnergyMomentEqn}
    \frac{\diff}{\diff t}
    \int_0^\infty \int_0^\infty
    r^{d-1}z \omega(r,z,t) \diff r\diff z
    =-(d-1) \int_0^\infty\int_0^\infty 
    r^{d-2}u_z^2 \diff r \diff z \\
    +\frac{d-1}{2} \int_0^\infty\int_0^\infty 
    r^{d-2}u_r^2 \diff r \diff z
    +\int_0^\infty\int_0^\infty 
    r^{d-1} u_r \partial_z u_z \diff r \diff z.
    \end{multline}
    We will use the divergence free constraint to simplify the last term, computing that
    \begin{align}
    \int_0^\infty\int_0^\infty 
    r^{d-1} u_r \partial_z u_z \diff r \diff z
    &=
    \int_0^\infty\int_0^\infty 
    r^{d-1} u_r \left(-(d-2)\frac{u_r}{r}-\partial_r u_r\right) \diff r \diff z \\
    &\left(\frac{d-1}{2}-(d-2)\right)
    \int_0^\infty\int_0^\infty 
    r^{d-2}u_r^2 \diff r \diff z.
    \end{align}
    Plugging this back into \eqref{EnergyMomentEqn}, we find that
    \begin{multline}
    \frac{\diff}{\diff t}
    \int_0^\infty \int_0^\infty
    r^{d-1} z\omega(r,z,t) \diff r\diff z
    =
    -(d-1)\int_0^\infty\int_0^\infty
    r^{d-2}u_z(r,z,t)^2\diff r \diff z \\
    +\int_0^\infty\int_0^\infty
    r^{d-2}u_r(r,z,t)^2\diff r \diff z.
\end{multline}
The result then follows by conservation of energy by making the substitution
\begin{equation}
    K_r(t)=K_0-K_z(t).
\end{equation}
\end{proof}

We will now use Proposition \ref{EnergyMomentProp} to prove Theorem \ref{ConditionalBlowupThmIntro}, our conditional blowup result for the Euler equation with the colliding vortex tube geometry, which is restated for the reader's convenience.

\begin{theorem} \label{ConditionalBlowupThm}
    Suppose $u\in C\left([0,T_{max});
H^s_* \left(\mathbb{R}^d\right)\right)
\cap C^1\left([0,T_{max}),H^{s-1}_*
\left(\mathbb{R}^d\right)\right), 
d\geq 4, s>2+\frac{d}{2}$, is a solution of the Euler equation and that $\omega^0(r,z)$ is odd in $z$, with for all $r,z>0$
\begin{equation}
    \omega^0(r,z)\geq 0,
\end{equation}
and that $\omega^0$ is not identically zero.
Further suppose that there exists $\epsilon>0$ such that for all $0\leq t<T_{max}$,
\begin{equation}
    K_z(t)\geq \frac{1+\epsilon}{d}K_0.
\end{equation}
Then this solution of the Euler equation blows up in finite-time with
\begin{equation}
    T_{max}\leq \frac{1}{\epsilon K_0} \int_0^\infty \int_0^\infty
    r^{d-1}z \omega^0(r,z) \diff r\diff z.
\end{equation}
\end{theorem}

\begin{proof}
    Applying Theorem \ref{GeometryPreservedThm}, we can see that the oddness and positivity conditions on the vorticity are both preserved by the dynamics, and applying Proposition \ref{EnergyMomentProp}, we find that
    for all $0\leq t<T_{max}$
    \begin{align}
    \frac{\diff}{\diff t}
    \int_0^\infty \int_0^\infty
    r^{d-1} z\omega(r,z,t) \diff r\diff z
    &=
    -d K_z(t)
    +K_0 \\
    &\leq -\epsilon K_0.
    \end{align}
    Integrating this differential inequality we find that
    for all $0\leq t<T_{max}$
    \begin{equation}
    \int_0^\infty \int_0^\infty
    r^{d-1} z\omega(r,z,t) \diff r\diff z
    \leq 
    \int_0^\infty \int_0^\infty
    r^{d-1} z\omega^0(r,z) \diff r\diff z
    -\epsilon K_0 t.
    \end{equation}
    Due to non-negativity, we can see that for all $0\leq t<T_{max}$,
    \begin{equation}
    \int_0^\infty \int_0^\infty
    r^{d-1} z\omega(r,z,t) \diff r\diff z
    >0.
    \end{equation}
    Taking the limit $t\to T_{max}$, we can see that
    \begin{equation}
    \epsilon K_0 T_{max}
    \leq 
    \int_0^\infty \int_0^\infty
    r^{d-1} z\omega^0(r,z) \diff r\diff z,
    \end{equation}
    and this completes the proof.
\end{proof}

\begin{remark} \label{BlowupConjecture}
The most natural blowup scenario to consider for axisymmetric, swirl-free solutions in four and higher dimensions are vorticities that are odd in $z$, positive on the upper half plane, and vanish only linearly at the axis of symmetry, leading to $\frac{\omega^0}{r^{d-2}}\notin L^\infty$. 
Theorem \ref{ConditionalBlowupThm} suggests the possibility that there exists some $d_0\geq 4$, such that for all $d\geq d_0$, there exists $u^0\in H^s_*\left(\mathbb{R}^d\right), s>2+\frac{d}{2}$, with $\omega^0$ odd in $z$, for all $r,z>0, \omega^0(r,z)>0$, and 
$\frac{\omega^0}{r^{d-2}}\notin L^\infty$, such that the solution of the Euler equation with this initial data blows up in finite-time.

This is precisely the geometric setup given in Elgindi's recent blowup result for axisymmetric, swirl-free $C^{1,\alpha}$ solutions of the Euler equation in three dimensions \cite{Elgindi}. Similar to the geometric setup proposed for finite-time blowup here, the vorticity Elgindi's blowup solutions are odd in $z$ and positive in the upper half plane, and involve blowup driven by compression along the $z$ axis, and stretching in the horizontal plane.
Furthermore, a key aspect of Elgindi's blowup result is that $\frac{\omega^0}{r}\notin L^\infty$.
This suggests that a similar approach may be able to yield smooth solutions of the Euler equation in four and higher dimensions that blowup in finite-time when $\frac{\omega^0}{r^k}\notin L^\infty$.
This is also the geometry used by Choi and Jeong to prove growth of the vorticity at infinity of the form
\begin{equation}
    \|\omega(\cdot,t)\|_{L^\infty}
    >C(1+t)^{\frac{1}{15}-\epsilon},
\end{equation}
for smooth, axisymmetric, swirl-free solutions of the Euler equation in three dimensions \cite{ChoiJeong}.
This class of data was also investigated by Childress in three dimensions as a potential case of vorticity growth \cite{Childress}.
\end{remark}

\begin{remark}
Even in two dimensions, the geometric scenario considered in Remark \ref{BlowupConjecture}, has a substantial precedent in the literature.
There are a great number of papers in the literature of the two dimensional Euler equation based on vorticities $\omega$ that are 
odd with respect to both $x_1$ and $x_2$,
\begin{align}
    \omega(-x_1,x_2)
    &=-\omega(x_1,x_2) \\
    \omega(x_1,-x_2)
    &=
    -\omega(x_1,x_2),
\end{align}
and nonnegative in the first quadrant, with $\omega(x_1,x_2)\geq 0$ for $x_1,x_2>0$.
For examples, Iftimie, Gamblin, and Sideris used a vorticity with this structure to prove linear growth in the radius of support in two dimensions \cite{Iftimie}, showing that
\begin{equation}
    \frac{\diff R}{\diff t}
    \geq \kappa\left(\omega^0\right),
\end{equation}
where $\kappa\left(\omega^0\right)>0$ is a constant depending on the initial vorticity.
This is also the geometry of the log-Lipshitz stationary solution of the Euler equation considered by Bahouri and Chemin \cite{BahouriChemin} that exhibits double exponential growth in the Lagrangian map. Kiselev and \v{S}ver\'{a}k used a very similar geometry \cite{KiselevSverak}, but with a boundary, to prove double exponential growth of the vorticity gradient for solutions of the Euler equation in two dimensions.

We should note that this is precisely the two dimensional analogue geometric setup proposed for finite-time blowup in four and higher dimensions in Remark \ref{BlowupConjecture}.
Note that in two dimensions
$r=|x_1|$ and $e_r=\sgn(x_1)e_1$,
and so $u$ is axisymmetric if and only if $u_1$ is odd in $x_1$ and $u_2$ is even in $x_1$.
Recall that $\omega=\partial_1 u_2-\partial_2 u_1$,
and we can see that if $u$ is axisymmetric implies that $\omega$ is odd with respect to $x_1$. Using the Biot-Savart law, it can be seen that in fact $u$ is axisymmetric in two dimensions if and only if $\omega$ is odd with respect to $x_1$.
Furthermore, in two dimensions any axisymmetric vector field is automatically swirl-free, as $\mathbb{R}^2=\spn\left\{e_r,e_z\right\}$.
Therefore, the requirement that $\omega$ is odd with respect to $x_1$ in two dimensions is analogous to the requirement that $u$ is axisymmetric, swirl-free in three and higher dimensions. Likewise the requirement that $\omega$ is odd with respect to $x_2$ in two dimensions is analogous to the requirement that $\omega$ is odd with respect to $z$ in three and higher dimensions. Finally the requirement that $\omega$ is nonnegative in the first quadrant in two dimensions is analogous to the requirement that $\omega$ is nonnegative for $z>0$ in three and higher dimensions. This shows that the geometric setup proposed for finite-time blowup in four and higher dimensions in Remark \ref{BlowupConjecture} is entirely analogous to the geometric setup to previous works giving lower bounds based on a hyperbolic stagnation point at the origin.
\end{remark}

\appendix

\section{A stream function approach to the Biot-Savart Law} \label{StreamFunctionSection}

In this appendix, we will give a second derivation of the Biot-Savart Law 
(Proposition \ref{BiotSavartCoorindate1}) 
for axisymmetric no-swirl divergence-free vector fields in $\R^d$, $d\ge3$, using the stream function.
For the 3 dimensional axisymmetric flows, the stream function is given in Feng-Sverak \cite{FengSverak}*{Page 97}, 
\EQ{
\psi(r,z) = \frac 1{2\pi} \iint_\Pi \int_0^\pi \frac{r \bar r \cos \th}{ \left(r^2+\rb^2-2r\rb \cos \th 
    +(z-\zb)^2\right)^{1/2}} d\th \,\om_\th(\bar r,\bar z) d\bar rd\bar z
}
which is also formula (2.3) in \cite{ChoiJeong}. Here $\Pi = (0,\infty)\times \R$ for $(\bar r,\bar z)$, and
$\om_\th = \pd_z u_r- \pd_r u_z$ has an opposite sign as $\om$. Note that another common choice of the  stream function for $d=3$ is $ \psi_\th = r^{-1} \psi$  with
$-\De( \psi_\th e_\th) = \om_\th e_\th$, see \cite{nslec}*{(10.12)}.

For general dimension $d=k+2 \ge 3$,
due to $\div u=0$, 
\[
\pd_r (r^k   u_r ) + \pd_z (r^k   u_z)=0,
\]
there is $\psi(r,z)$ 
such that
\EQ{\label{ur-uz-psi}
r^k   u_r =- \pd_z \psi, \quad
r^k   u_z =  \pd_r \psi.
}
Thus
\EQ{
\om=\pd_r u_z - \pd_z u_r
= \pd_r (r^{-k}   \pd_r \psi)-\pd_z(-r^{-k}   \pd_z \psi)
= r^{-k}   \bke{ \pd_r^2  +\pd_z^2 -\frac kr \pd_r } \psi.
}
Let
$
\psi = r^{k+1} \eta$.
Then
\EQ{
\om=  r \bke{ \pd_r^2 + \frac{k+2}r \pd_r 
+  \pd_z^2  } \eta
= r \De_{\R^{k+3} \times \R} \eta.
}
Hence we have $\eta=(\De_{\R^{k+3} \times \R})^{-1}\Om$, $\Om=\frac{\om}{r}$. We can compute
$\eta(r,z)$ at $re_1+ze_z$ where $e_z$ is the last standard basis vector in $\R^{k+3} \times \R$,
\EQ{
\eta(r,z) = \int_{\R^{k+3} \times \R} \frac {-C_0}{|re_1+ z e_{z} - x|^{k+2}} \,\Om(x) dx,
}
where $C_0=\frac{\Ga(\frac{d}2)} {4\pi^{\frac d2+1}}$ is the constant of the Newton kernel in $\R^{d+2}$.
Write $x=(\bar r y, \bar z) \in \R^{k+3} \times \R$ with $(\bar r, \bar z) \in \Pi$ and $y=(y_1,y') \in \mathbb{S}^{d}$, $y_1 \in [-1,1]$ being the first component of $y$. We have
\[
|re_1+ z e_{z} - x|^2=(r-\bar r y_1)^2 + |\bar r y'|^2+ (z-\bar z)^2=(r-\bar r)^2 +  2r\bar r(1-y_1) + (z -\bar z)^2,
\]
and
\EQ{
\eta(r,z) = \iint_\Pi \int_{\mathbb{S}^{d}} \frac {-C_0} {
\bkt{(r-\bar r)^2 +  2r\bar r(1-y_1) + (z -\bar z)^2}^{\frac{d}2}} dS(y) \,\frac{\om(\bar r,\bar z)}{\bar r} \,d\bar r\,d\bar z.
}

By Lemma \ref{SphericalIntegration},
\EQ{
\eta(r,z)
= -C_1\iint_\Pi \int_0^\pi \frac {(\sin \th)^{k+1} \,d\th}
{X^{\frac{d}2}}\, \,\om(\bar r,\bar z)\bar r^{k+1}  \,d\bar r \,d\bar z,
}
where 
$C_1=C_0m_{k+1} = \frac{\Ga(\frac{d}2)} {4\pi^{\frac d2+1}}\cdot  \frac{2 \pi^{\frac{d}2}}{\Ga(\frac{d}2)} =\frac1{2\pi}$,
and
\EQ{
X=(r-\bar r)^2 +  2r\bar r(1-\cos \th) + (z -\bar z)^2 .
}
Thus
\EQ{\label{psi.int1}
\psi(r,z)=-\frac 1{2\pi}\iint_\Pi \int_0^\pi \frac {(\sin \th)^{k+1} }
{X^{\frac{d}2}}\,d\th \,\om(\bar r,\bar z)(r\bar r)^{k+1}  \,d\bar r \,d\bar z.
}

From \eqref{psi.int1} and \eqref{ur-uz-psi} that $u_r(r,z) = - \frac 1{r^k}\pd_z \psi$,
\EQ{\label{ur-cFk}
u_r(r,z)  = -
\frac d{2\pi}\iint_\Pi \int_0^\pi \frac {(\sin \th)^{k+1}}
{X^{\frac{d}2+1}} \,d\th \,(z-\bar z)\om(\bar r,\bar z)r\bar r^{k+1}  \,d\bar r \,d\bar z.
}
We can rewrite it using $\pd_\th X=2r\bar r \sin\th$ and integrate by parts in $\th$,
\EQS{\label{ur-cFk2}
u_r(r,z)  & = 
\frac 1{2\pi}\iint_\Pi \int_0^\pi(\sin \th)^{k} \pd_\th \bke{ \frac {1}
{X^{\frac{d}2}}}\,d\th \,(z-\bar z)\om(\bar r,\bar z)\bar r^{k}  \,d\bar r \,d\bar z
\\
&=-\frac k{2\pi}\iint_\Pi \int_0^\pi\frac{(\sin \th)^{k-1}\cos \th} 
{X^{\frac{d}2}}\,d\th \,(z-\bar z)\om(\bar r,\bar z)\bar r^{k}  \,d\bar r \,d\bar z.
}
This recovers \eqref{radvelo2}.

We can also rewrite $\psi$ in \eqref{psi.int1} as in \eqref{ur-cFk2} to get
\EQS{ \label{psi.int2}
\psi(r,z)& =\frac 1{2\pi}\iint_\Pi \int_0^\pi  {(\sin \th)^{k} } \pd_\th \bke{ \frac {1}
{kX^{\frac{k}2}}}\,d\th
\, \,\om(\bar r,\bar z)(r\bar r)^{k}  \,d\bar r \,d\bar z
\\
& =-\frac 1{2\pi}\iint_\Pi \int_0^\pi \frac {(\sin \th)^{k-1}\cos \th } 
{X^{\frac{k}2}}\,d\th
\, \,\om(\bar r,\bar z)(r\bar r)^{k}  \,d\bar r \,d\bar z.
}
From \eqref{psi.int2} and \eqref{ur-uz-psi} that $u_z(r,z) = \frac 1{r^k}  \pd_r\psi$, 
 and using $\pd_r X=2r - 2\bar r \cos \th$,
\begin{align*}
    u_z(r,z) 
    &=
    \frac 1{2\pi}\iint_\Pi \int_0^\pi (\sin \th)^{k-1}\cos \th\bke{ \frac { k(r - \bar r \cos \th)}
{X^{\frac{k}2+1} } - \frac { k}{rX^{\frac{k}2} }}  d\th \,\om(\bar r,\bar z)\bar r^{k}  \,d\bar r \,d\bar z \\
&=I_1+I_2.
\end{align*}

We rewrite $I_2$ and integrate by parts to get
\EQS{
I_2&=-\frac 1{2\pi} \iint_\Pi \int_0^\pi  \frac {1}
{rX^{\frac{k}2}} \pd_\th \bke{ (\sin \th)^{k}}  d\th \,\om(\bar r,\bar z)\bar r^{k}  \,d\bar r \,d\bar z
\\
&=-\frac {k}{2\pi} \iint_\Pi \int_0^\pi    \frac {(\sin \th)^{k+1} }
{X^{\frac{d}2}}  \,d\th \,\om(\bar r,\bar z)\bar r^{k+1}  \,d\bar r \,d\bar z.
}
Thus
\begin{align*}
    u_z(r,z)& =
I_1+I_2
\\
&=\frac k{2\pi} \iint_\Pi \int_0^\pi \frac{(\sin \th)^{k-1}}{X^{\frac{d}2}} 
\bigg[ (\cos \th) (r - \bar r \cos \th)  -(\sin \th)^2\rb  \bigg]  \,d\th \,\om(\bar r,\bar z)\bar r^{k}  \,d\bar r \,d\bar z
\\
&=  \frac k{2\pi} \iint_\Pi \int_0^\pi \frac{(\sin \th)^{k-1}}{X^{\frac{d}2}} 
( r\cos \th-\rb  )  \,d\th \,\om(\bar r,\bar z)\bar r^{k}  \,d\bar r \,d\bar z.
\end{align*}
This recovers \eqref{horvelo2}.

We can rewrite the steam function formulas in \eqref{psi.int1} and \eqref{psi.int2} as
\begin{equation}
  \psi(r,z,t) = -\frac{1}{2\pi} \iint\limits_{\Pi} \cF(S)  \; (r \rb)^{\frac{d}{2}-1} 
  \;\om(\rb,\zb,t) d\rb d\zb \;,
\end{equation}
where 
\[
  S = \frac{(r-\rb)^2 + (z-\zb)^2}{r \rb},\quad X= r\rb (2-2\cos\th + S),
\]
and for $s > 0$,
\EQ{
\cF(s) = \int_0^\pi  \frac{(\sin \th)^{k+1}}{[2(1-\cos \th) + s]^{\frac d2}}\,d\th
 = \int_0^\pi  \frac{(\sin \th)^{k-1} \cos \th}{[2(1-\cos \th) + s]^{\frac k2}}\,d\th.
}
This form isolates the dependence on length variables $r,r',z,z'$ in
the angular integral.
The velocity components are recovered from the stream function 
via~\eqref{ur-uz-psi}, 
resulting in the expressions
\EQS{
u_r(r,z,t) =  \frac {1} {\pi r^{d-2}} 
\iint\limits_{\Pi} 
\cF'(S) (z-\zb) (r \rb)^{\frac{d}{2}-2} \; \om(\rb,\zb,t) d\rb d\zb }
and
\EQS{
u_z(r,z,t) = -\frac {1} {2\pi r^{d-2}} 
\iint\limits_{\Pi} \bke{\cF'(S) \p_r S +
\frac{d-2}{2r} \cF(S)}
(r\bar r )^{\frac{d}{2}-1} \; \om(\rb,\zb,t) d\rb d\zb .}

\section*{Acknowledgements}
The authors would like to thank Stephen Gustafson for his helpful advice on this problem. The authors would also like to thank the referees for their thorough reading of the manuscript and helpful suggestions.

Both authors were partially supported by NSERC under the grants RGPIN-2023-04534 and  RGPIN-2018-04137. EM was also supported by the Pacific Institute for the Mathematical Sciences as a PIMS postdoctoral fellow at the University of British Columbia. This PIMS postdoctoral fellowship was supported by NSERC under grant no. 568577-2022.

\bibliography{bib}

\end{document}